\title[Structure functions and energy conservation]{On the conservation of energy in two-dimensional incompressible flows
}
\author{S.~Lanthaler, \; S.~Mishra, \and C.~Par\'es-Pulido}
\address[S. Lanthaler, S. Mishra and C. Par\'es-Pulido]{Seminar for Applied Mathematics, ETH Z\"urich, R\"amistrasse 101, 8092 Z\"urich, Switzerland}
\newcommand{\Set}[2]{\left\{ #1 \,|\, #2 \right\}}
\newcommand{\revision}[1]{{\color{black}{#1}}}
\newcommand{\explain}[2]{\overset{\mathclap{\underset{\downarrow}{#2}}}{#1}}
\newcommand{\defeq}{{:=}}
\newcommand{\weaklyto}{{\rightharpoonup}}
\newcommand{\weaklystarto}{\overset{\ast}{\weaklyto}}
\newcommand{\T}{\mathbb{T}}
\newcommand{\R}{\mathbb{R}}
\newcommand{\E}{\mathbb{E}}
\renewcommand{\div}{{\mathrm{div}}}
\newcommand{\curl}{{\mathrm{curl}}}
\newcommand{\Lip}{\mathrm{Lip}}
\renewcommand{\P}{{\mathcal{P}}}
\newcommand{\D}{\mathcal{D}}
\newcommand{\eps}{\epsilon}
\newcommand{\e}{\eps}
\newcommand{\define}{\textbf}
\declaretheoremstyle[
  headfont=\normalfont\bfseries\itshape,
  numbered=unless unique,
  bodyfont=\normalfont,
  spaceabove=1em plus 0.75em minus 0.25em,
  spacebelow=1em plus 0.75em minus 0.25em,
  qed={},
]{deflt}
\theoremstyle{deflt}
\newtheorem{theorem}{Theorem}[section]
\newtheorem{remark}[theorem]{Remark}
\newtheorem{definition}[theorem]{Definition}
\newtheorem{lemma}[theorem]{Lemma}
\newtheorem{proposition}[theorem]{Proposition}
\newtheorem{corollary}[theorem]{Corollary}
\numberwithin{equation}{section}
\numberwithin{theorem}{section}
\newtheorem{claim}{Claim}
\newcommand{\embeds}{{\hookrightarrow}}
\begin{document}

\begin{abstract}
We prove the conservation of energy for weak and statistical solutions of the two-dimensional Euler equations, generated as strong (in an appropriate topology) limits of the underlying Navier-Stokes equations and a Monte Carlo-Spectral Viscosity numerical approximation, respectively. We characterize this conservation of energy in terms of a uniform decay of the so-called structure function, allowing us to extend existing results on energy conservation. Moreover, we present numerical experiments with a wide variety of initial data to validate our theory and to observe energy conservation in a large class of two-dimensional incompressible flows. 
\end{abstract}

\thanks{\textbf{Acknowledgements:} The research of SL and SM are partially supported by the European Research council (ERC) consolidator grant ERC COG 770880: COMANFLO. }

\maketitle

\section{Introduction} \label{sec:intro}
Turbulence is a defining feature of fluid flows at high Reynolds numbers \cite{Frisch1995}. It is characterized by the dynamic generation of structures (eddies) at smaller and smaller scales and by the cascade of energy from large scale features of the flow to ever smaller scales. 

Arguably, the famous K41 theory of Kolmogorov provides the most coherent explanation for fully-developed turbulence. As presented in \cite{Frisch1995}, it is based on the incompressible Navier-Stokes equations with initial data $\overline{u}$, given by,
\begin{gather} \label{eq:NavierStokes}
\left\{
\begin{aligned}
\partial_t {u^\nu} 
+{u^\nu}\cdot \nabla {u^\nu} 
+ \nabla p^\nu
&=
\nu \Delta u^\nu, 
\\
\div({u^\nu}) 
&= 
0, 
\\
{u^\nu}|_{t=0} 
&=
\overline{u},
\end{aligned}
\right.
\end{gather}
Here,  the velocity field is denoted by ${u} \in \R^d$ (for $d=2,3$), and the pressure is denoted by $p \in \R_+$. The pressure acts as a Lagrange multiplier to enforce the divergence-free constraint. The equations need to be supplemented with
suitable boundary conditions. Throughout this work, we shall assume periodic boundary conditions, and take as our domain $D$, the $d$-dimensional torus $D=\mathbb{T}^d$, $d\in \{2,3\}$.

For any given \emph{viscosity} $\nu > 0$, it is straightforward to see that the incompressible Navier-Stokes equations formally satisfy an energy balance equation of the form
\[
\frac{d}{dt} \frac12 \int_D |u^\nu|^2 \, dx 
=
-\nu \int_D |\nabla u^\nu|^2 \, dx.
\]
Here, the left-hand side describes the time evolution of the kinetic energy $E(t) = \frac 12 \Vert u(t) \Vert_{L^2_x}^2$, while the right-hand side term describes the energy dissipation at small scales by viscosity. It is clear from this equation that we should expect $E(t)$ to be non-increasing in time, $E(t) \le E(0)$ for all $t\ge 0$; in fact, we should at least expect that suitable solutions of \eqref{eq:NavierStokes} satisfy 
\begin{equation}
    \label{eq:edis}
E(0) - E(t) \sim \nu \int_0^t \Vert \nabla u^\nu \Vert_{L^2_x}^2 \, dx
\end{equation}

Given that turbulence appears at high Reynolds number (low viscosity), the behavior of the energy dissipation (the right hand side of the energy balance \eqref{eq:edis}) is of great interest. In fact, one of the fundamental postulates of Kolmogorov's K41 physical theory of fully developed homogeneous isotropic turbulence is that $\langle \nu \Vert \nabla u^\nu \Vert_{L^2_x}^2 \rangle \to \epsilon_0 > 0$, as $\nu \to 0$ \cite{Kolmogorov41a,Kolmogorov41b}. Here, $\langle \ldots \rangle$ refers to a suitable ensemble average (or long time average under an ergodicity hypothesis). In other words, a cornerstone of Kolmogorov's theory is the assumption of \emph{anomalous, i.e finite, non-zero} energy dissipation in the infinite Reynolds number limit.

Formally, taking the infinite Reynolds number ($\nu \rightarrow 0$) limit in the Navier-Stokes equations and assuming that $u^{\nu} \rightarrow u$, implies that $u$ satisfies the \emph{incompressible Euler equations}:
\begin{gather} \label{eq:Eulerfull}
\left\{
\begin{aligned}
\partial_t {u} 
+{u}\cdot \nabla {u} 
+ \nabla p
&=
0, 
\\
\div({u}) 
&= 
0, 
\\
{u}|_{t=0} 
&=
\overline{u}. 
\end{aligned}
\right.
\end{gather}
The issue of anomalous dissipation in turbulent flows was cast in terms of solutions of the incompressible Euler equations by Onsager in \cite{Onsager1949} (see \cite{ES2006} for a modern account), where he observed that H\"older continuous solutions of the incompressible Euler equations $u \in C^\alpha$ should conserve energy provided that $\alpha > 1/3$, but might exhibit anomalous dissipation if $\alpha < 1/3$, even in the zero viscosity limit. One part of this \emph{Onsager conjecture}, i.e. energy conservation for H\"older continuous solutions for the Euler equations, with exponent $\alpha>1/3$ was addressed in \cite{CET1994} (see also \cite{Eyink1994}), where energy conservation was shown, as long as the solution $u\in L^3([0,T];B^{1/3+\epsilon,\infty}_3)$, $\epsilon > 0$, where $B^{\alpha,q}_p$ denotes Besov spaces.

The other part of Onsager's conjecture, i.e. for any $\epsilon >0$, there exists an energy dissipative solution $u\in C^{1/3-\epsilon}$, has been recently shown in \cite{Isett2018,BLSV2019} for the three-dimensional case, based on pioneering work of DeLellis and Szekelyhidi in \cite{LS2009} where convex integration techniques were adapted to the study of fluid flows. 

However, there is an essential caveat in the construction of the so-called \emph{wild solutions} that were used in the aforementioned papers to demonstrate anomalous dissipation. At the outset, it is unclear if these wild solutions can be realized as vanishing viscosity limits of the Navier-Stokes equations \eqref{eq:NavierStokes}. If not, their link to the questions of anomalous dissipation in turbulent flows is rather tenuous. 

It is widely known that vanishing viscosity limits might exhibit additional structures that could well constrain the formation of energy dissipative solutions. This is especially true in two space dimensions, as there is a critical role played by the vorticity $\omega = {\rm curl}(u)$ of the flow. In fact, in a recent paper \cite{CLNS2016}, the authors prove that if a weak solution of the incompressible Euler equations $u$ with initial data having vortcitiy $\overline{\omega} \in L^p$, $p>1$, is obtained as the limit $u^\nu \to u$ of solutions $u^\nu$ of the $\nu$-Navier-Stokes equations \eqref{eq:NavierStokes} with the same initial data, then $u$ is energy conservative. Thus -- at least in two dimensions -- Onsager criticality is not the last word on energy conservation.

A critical assessment of the results of the paper \cite{CLNS2016} motivate us to ask the following questions: first, can one extend the energy conservation results of \cite{CLNS2016} to even rougher initial data? In two space dimensions, Delort \cite{Delort1991} (see also \cite{VW1993}) proved existence of weak solutions of the incompressible Euler equations, even when the initial vorticity $\overline{\omega}\in H^{-1} \cap \mathcal{BM}$ and $\overline{\omega}$ can be written as the sum of a bounded measure of distinguished sign and a function in $L^p$, $1\leq p \leq \infty$. Hence, we are interested in investigating if weak solutions of the Euler equations (realized as a vanishing viscosity limit of the Navier-Stokes equations), with measure-valued initial vorticity, are energy conservative. Such initial data correspond to interesting physical scenarios such as \emph{vortex sheets}. \revision{In two dimensions, the vorticity of vortex sheet initial data is initially distributed along a (smooth) curve $\gamma_0$. Classically, the dynamics of such vortex sheets has been studied by considering the evolution equation for $\gamma_t$, known as the Birkhoff-Rott equation. From the results presented in \cite{Shvydkoy2009} (pertaining to both two and higher dimensions), it follows in particular that classical vortex sheet solutions conserve energy as long as the evolving curve $\gamma_t$ remains sufficiently smooth \cite[Corollary 11]{Shvydkoy2009}. Short-time existence and regularity results for $\gamma_t$ are known for a suitable class of analytic initial data \cite{Sulem1981,Caflisch1988}, but in general, numerical evidence \cite{Krasny1986} indicates that global existence is precluded by the occurrence of a roll-up singularity. The energy conservation results for classical vortex sheets could thus suggest that an energy conservation result holds also in the zero-viscosity limit, at least before the occurrence of vortex sheet roll-up. Through careful numerical experiments in the present article, we will investigate the evolution of vortex sheets even well beyond the time of roll-up singularity.}

\revision{In addition to the question of energy conservation in the zero-viscosity limit}, we are interested in investigating if limits of other interesting approximations of the two-dimensional Euler equations, for instance numerical approximations such as the spectral viscosity method \cite{Tadmor2004,BardosTadmor}, are energy conservative. 

Another aspect of the results of \cite{Isett2018,BLSV2019,CLNS2016} is the fact that they pertain only to deterministic solutions. On the other hand, most descriptions of turbulence, including the K41 theory, are probabilistic in nature, with the anomalous dissipation hypothesis being considered for ensemble averages \cite{Frisch1995}. It is natural to ask if the analogous energy conservation results hold for a probabilistic description of turbulent flows.  

Given these questions, the main goals and results of the current paper are:
\begin{itemize}
    \item We prove that any weak solution $u$ of the \revision{two-dimensional} incompressible Euler equations \eqref{eq:Eulerfull}, which can be obtained as a strong limit $u^\nu \to u$ in $L^1_t([0,T];L^2_x)$ in the zero viscosity limit of the incompressible Navier-Stokes equations \eqref{eq:NavierStokes}, $\nu \to 0$, must be \emph{energy conservative}. This implies in particular energy conservation for the large class of initial data for which strong $L^2$-convergence (in $C([0,T];L^2_x)$) has been proven in \cite{FNT2000}, and extends the results of \cite{CLNS2016} to initial vorticity beyond $L^p$, $p>1$,
    \item We consider the probabilistic framework of \emph{statistical solutions}, proposed for the Navier-Stokes equations in \cite{FTbook} and references therein, and more recently for the Euler equations in \cite{FLM17,FW2018,LMP2019} and prove analogous energy conservation results for statistical solutions of Euler equations, in particular, those that arise as limits of a spectral viscosity-Monte Carlo numerical approximation of \cite{LMP2019}. 
    \item For both sets of results, we express the strong compactness of approximating sequences in terms of uniform decay of the so-called \emph{structure function} \eqref{eq:structfunT}. The structure function appears repeatedly in the turbulence literature \cite{Frisch1995} and references therein, \revision{as well as in the more recent mathematical discussions of \cite{CG2012,CV2018,DrivasNguyen2019},}
 and it can be computed in numerical approximations and measured in experiments. Thus, characterizing energy conservation (and anomalous dissipation) in terms of the structure function is very convenient.
    \item The validity of the proposed theory is illustrated in terms of different numerical experiments. In particular, we consider initial data that don't necessarily belong to the class considered by Delort in \cite{Delort1991} and for which no compactness/existence results are available. Numerical experiments reveal that the approximate solutions possess the desired decay of the structure function and computed energy is conserved in time. 
\end{itemize}
The rest of the paper is organized as follows: in section \ref{sec:energy}, we characterize energy conservation for the vanishing viscosity limit. Energy conservation for numerical approximations to statistical solutions of \eqref{eq:Eulerfull} is considered in section \ref{sec:stat} and numerical experiments to illustrate and complement the theory are presented in section \ref{sec:numerics}.
\section{On Energy conservation of vanishing viscosity limits} \label{sec:energy}
Our goal in this section will be to characterize the conservation of energy in weak solutions of the two-dimensional Euler equations \eqref{eq:Eulerfull}, that arise as vanishing viscosity limits of the Navier-Stokes equations \eqref{eq:NavierStokes}. We formalize these concepts with the following definition, first introduced in \cite{DipernaMajda},
\begin{definition} \label{def:approxsolseq}
Let $\{u_k\}$, $k\in \mathbb{N}$, be a uniformly bounded sequence in $L^\infty(0,T;L^2(\T^2;\R^2))$. The sequence $\{u_k\}$ is an \define{approximate solution sequence} for the incompressible Euler equations, if the following properties are satisfied:
\begin{enumerate}
\item The sequence $\{{u}_k\}$ is uniformly bounded in $\Lip([0,T);H^{-L}(\T^2;\R^2))$, for some (possibly large) $L>1$.
\item For any test vector field ${\Phi} \in C_c^\infty([0,T)\times \T^2;\R^2)$ with $\div({\Phi})=0$, we have:
\[
\lim \limits_{k \rightarrow \infty} \int_0^T \int_{\T^2} {\Phi}_t \cdot {u}_k + (\nabla {\Phi}):({u}_k \otimes {u}_k) \, dx \, dt
+ \int_{\T^2} {\Phi}(x,0)\cdot {u}_k(x,0) \, dx 
= 0.
\]
\item $\div({u}_k)=0$ in $\D'([0,T]\times \T^2)$.
\end{enumerate}
\end{definition}

We shall often denote (spatial) $L^p$ spaces, such as $L^p(D;\mathbb{R}^2)$ in the abbreviated form $L^p_x$ in the following, provided that the domain and co-domain are clear from the context. Similar notation will be used to denote time-dependent Bochner spaces $L^p_tL^2_x := L^p(0,T;L^2_x)$, where it is understood that the temporal domain is $[0,T]$ for some fixed $T>0$.

Our interest is in particular approximating sequences that stem from the weak solutions of the Navier-Stokes equations. Hence, following \cite{CLNS2016}, we define,
\begin{definition}\label{def:physreal}
A weak solution $u \in L^\infty(0,T;L^2_x)$ of the incompressible Euler equations with initial data $\overline{u}\in L^2_x$ is \define{physically realisable}, if there exists a sequence $u^{\nu_k}$, such that each $u^{\nu_k} \in C([0,T];L^2_x)$ 
\begin{enumerate}
\item is a solution of \eqref{eq:NavierStokes} with viscosity $\nu_k \to 0$ ($k\to \infty$), 
\item $u^{\nu_k}(t=0) \to \overline{u}$ strongly in $L^2_x$, ($k\to \infty$),
\item and $u^{\nu_k} \weaklyto u$ weakly in $L^2_t(L^2_x)$.
\end{enumerate}
In this case, we will refer to the sequence $u^{\nu_k} \weaklyto u$ as a \define{physical realisation} of $u$.
\end{definition}

As mentioned in the introduction, we seek to characterize compactness of approximating sequences and energy conservation in terms of the structure function. We introduce the structure function as follows. Given $u\in L^2_x$, we define $S_2(u;r)$ for $r\ge 0$ as follows:
\begin{align}\label{eq:structfun}
S_2(u;r) := \left( \int_D \fint_{B_r(0)} |u(x+h)-u(x)|^2 \, dh \, dx \right)^{1/2}.
\end{align}
Similarly, we define the time-integrated structure function $S^T_2(u;r)$ for $u\in L^2_tL^2_x$, by setting
\begin{align} \label{eq:structfunT}
S_2^T(u;r) := \left(\int_0^T S_2(u(t);r)^2 \, dt\right)^{1/2}.
\end{align}

\revision{
\begin{remark}
As pointed out in \cite[eq. (21), (22)]{DrivasNguyen2019}, the structure function $S^T_2(u^\nu;r)$ for solutions $u^\nu$ of the Navier-Stokes equations at diffusive length scales $r \in [0,\nu^{1/(2-2\alpha)}]$ satisfies an a priori algebraic decay of order $\alpha\in (0,1)$ (see also \cite[Lemma 4.5]{LMP2019} for a corresponding statement for the spectral-viscosity scheme). Indeed, from the $L^2([0,T];H^1_x)$-bound, it is immediate that $S^T_2(u^\nu;r) \le C r/\sqrt{\nu}$, where $C$ depends only on $\Vert u^\nu(t=0)\Vert_{L^2_x}$. So if $r \le \nu^{1/(2-2\alpha)}$, then $r/\sqrt{\nu} \le r^\alpha$ and consequently, the algebraic decay $S_2^T(u^\nu;r) \le C r^\alpha$ is satisfied in this range. In particular, to numerically verify an algebraic decay assumption $S_2^T(u^\nu;r) \le C r^\alpha$, it suffices to consider only a finite range, e.g. $r \in [\nu^{1/(2-2\alpha)},1]$.
\end{remark}

\begin{remark}
A measure of regularity very similar to the structure function \eqref{eq:structfun} has previously been employed in \cite{CG2012,CV2018,DrivasNguyen2019} to study the convergence of solutions of the Navier-Stokes equations to solutions of the Euler equations in the zero-viscosity limit, notably on bounded domains $D\subset \R^d$, $d=2,3$, with regular boundary. In this context, it has been shown \cite{DrivasNguyen2019} for both no-slip and Navier friction or slip boundary conditions, that the validity of a uniform algebraic upper bound,
\[
\limsup_{\nu_k\to 0}
\int_U |u^{\nu_k}(x+h)-u^{\nu_k}(x)|^2 \, dx \le C|h|^\zeta,
\]
for all $U \Subset D$ (here $C = C(U) > 0$, $\zeta = \zeta(U) \in (0,2)$), is a sufficient condition to conclude that the weak limit $u^{\nu_k}\weaklyto u$ is a weak solution of the Euler equations on $D$.
\end{remark}
}

In the present work, we will relate uniform (and not necessarily algebraic) decay of the structure functions to compactness properties and energy conservation of approximating sequences in the two-dimensional case. To this end, we need the following technical results,
\begin{lemma} \label{lem:hgradu}
We have for any $u \in H^1_x$:
\begin{align*}
\int_D \fint_{B_r(0)} |h\cdot \nabla u(x)|^2 \, dh \, dx
=
\frac {r^2}{4} \int_D |\nabla u(x)|^2 \, dx.
\end{align*}
\end{lemma}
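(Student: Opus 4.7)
The plan is to reduce the identity to a direct computation by exploiting the symmetry of the ball $B_r(0)$. I would first interchange the order of integration, so that the $x$-integral is on the outside, and focus on computing the inner average
\[
I(x) \defeq \fint_{B_r(0)} |h\cdot \nabla u(x)|^2 \, dh
\]
pointwise in $x$ (for $u\in H^1_x$, the gradient is defined a.e.\ and this manipulation is justified by Tonelli).

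Next I would expand $h\cdot \nabla u(x) = h_1 \partial_1 u(x) + h_2 \partial_2 u(x) \in \R^2$ and square out:
\[
|h\cdot \nabla u(x)|^2
= h_1^2 |\partial_1 u(x)|^2 + 2 h_1 h_2\, \partial_1 u(x)\cdot \partial_2 u(x) + h_2^2 |\partial_2 u(x)|^2.
\]
The ball $B_r(0) \subset \R^2$ is symmetric under $h_1 \mapsto -h_1$ (and likewise for $h_2$), so the cross term vanishes under averaging, and the same symmetry yields $\fint_{B_r(0)} h_1^2\, dh = \fint_{B_r(0)} h_2^2\, dh = \tfrac{1}{2}\fint_{B_r(0)} |h|^2\, dh$. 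A polar-coordinate computation gives
\[
\int_{B_r(0)} |h|^2 \, dh = \int_0^r 2\pi \rho^3 \, d\rho = \frac{\pi r^4}{2},
\qquad
|B_r(0)| = \pi r^2,
\]
so $\fint_{B_r(0)} h_i^2 \, dh = r^2/4$ for $i=1,2$. Therefore $I(x) = \tfrac{r^2}{4}\bigl(|\partial_1 u(x)|^2 + |\partial_2 u(x)|^2\bigr) = \tfrac{r^2}{4} |\nabla u(x)|^2$, and integrating over $D$ yields the claim.

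There is no real obstacle here; the only mild subtlety is making clear that $|\cdot|^2$ refers to the Euclidean norm of the $\R^2$-valued vector $h\cdot \nabla u$ (so that $|\nabla u|^2 = |\partial_1 u|^2 + |\partial_2 u|^2$ is the Frobenius/Hilbert--Schmidt norm of the Jacobian), and noting that the identity is initially shown for smooth $u$ and then extended to all of $H^1_x$ by density, since both sides are continuous in the $H^1_x$-norm.
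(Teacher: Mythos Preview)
Your proof is correct and leads to the same polar-coordinate computation as the paper's. The only cosmetic difference is that the paper, for each fixed $x$, rotates the $h$-coordinate system so that $h\cdot\nabla u(x)=h_1|\nabla u(x)|$, which eliminates the cross term in one stroke rather than via the parity argument you use; both routes then reduce to evaluating $\fint_{B_r(0)} h_1^2\,dh = r^2/4$.
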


\begin{proof}
Fix $x$. Choose a coordinate system $h=(h_1,h_2)$ such that $h\cdot \nabla u(x) = h_1 |\nabla u(x)|$. Then 
\begin{align*}
\fint_{B_r(0)} |h\cdot \nabla u(x)|^2 \, dh
&=
|\nabla u(x)|^2 \fint_{B_r(0)} h_1^2 \, dh
\\
&=
\frac{|\nabla u(x)|^2}{\pi r^2} \int_0^{2\pi}\int_0^r s^3\cos^2\theta  \, ds \, d\theta
\\
&=
\frac{|\nabla u(x)|^2 r^2}{4}.
\end{align*}
\end{proof}

The second technical inequality we will need is given in the following Lemma.

\begin{lemma} \label{lem:interpolationestimate}
There exists an absolute constant $C>0$, such that for any $u \in H^2_x$ and any $r > 0$, we have the following inequality
\begin{align} \label{eq:structfunestimate}
\Vert \omega \Vert_{L^2_x} 
\le 
Cr \Vert \nabla\omega \Vert_{L^2_x} + \frac{2 S_2(u;r)}{r},
\end{align}
where $\omega = \curl(u)$.
\end{lemma}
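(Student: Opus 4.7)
The plan is to apply the triangle inequality in the Hilbert space $L^2(D\times B_r(0);\,dx \otimes |B_r|^{-1}\,dh)$ to Taylor's formula with integral remainder,
\begin{align*}
h\cdot \nabla u(x)
= [u(x+h) - u(x)] - \int_0^1 (1-\tau)\, h^\top \nabla^2 u(x+\tau h)\, h \, d\tau,
\end{align*}
and then to rewrite the resulting derivative norms in terms of vorticity norms via the Helmholtz identity $\Vert \nabla u \Vert_{L^2_x}^2 = \Vert \omega \Vert_{L^2_x}^2 + \Vert \div u \Vert_{L^2_x}^2$, which is valid on $\T^2$ by integration by parts.

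By Lemma \ref{lem:hgradu}, the left-hand side has norm $\tfrac{r}{2}\Vert \nabla u \Vert_{L^2_x}$ in the space just mentioned, while the first bracket on the right has norm exactly $S_2(u;r)$ by definition. I would estimate the Taylor-remainder term by first applying Cauchy--Schwarz in $\tau$ (producing the factor $\int_0^1 (1-\tau)^2\, d\tau = 1/3$), followed by Fubini together with the translation invariance $\int_D |\nabla^2 u(x+\tau h)|^2\, dx = \Vert \nabla^2 u\Vert_{L^2_x}^2$, and the explicit polar-coordinates identity $\fint_{B_r(0)}|h|^4\, dh = r^4/3$. This yields the clean bound
\begin{align*}
\left\| \int_0^1 (1-\tau)\, h^\top \nabla^2 u(\cdot+\tau h)\, h\,d\tau \right\|_{L^2(D\times B_r)} \le \tfrac{r^2}{3}\Vert \nabla^2 u\Vert_{L^2_x}.
\end{align*}

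Combining these gives $\tfrac{r}{2}\Vert \nabla u\Vert_{L^2_x} \le S_2(u;r) + \tfrac{r^2}{3}\Vert \nabla^2 u\Vert_{L^2_x}$. Finally, I would invoke the Helmholtz identities $\Vert \nabla u\Vert_{L^2_x} = \Vert \omega\Vert_{L^2_x}$ and $\Vert \nabla^2 u\Vert_{L^2_x} = \Vert \nabla\omega\Vert_{L^2_x}$, both of which hold on $\T^2$ for divergence-free $u$ (the second follows by applying the first to each $\partial_i u$ and summing in $i$). Dividing by $r/2$ then gives the claim with $C = 2/3$.

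The main obstacle I anticipate is the careful accounting of constants in the Taylor-remainder estimate: Cauchy--Schwarz must be applied in the $\tau$-variable \emph{before} exchanging the order of integration, so that the inner spatial integral evaluates by periodicity of $u$ to a quantity independent of $h$ and $\tau$. Everything else is routine manipulation.
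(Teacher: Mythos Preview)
Your proof is correct and follows essentially the same route as the paper's: Taylor expansion with integral remainder, triangle inequality in $L^2(D\times B_r(0),\,dx\otimes |B_r|^{-1}dh)$, Lemma~\ref{lem:hgradu} for the gradient term, and the identities $\Vert\nabla u\Vert_{L^2_x}=\Vert\omega\Vert_{L^2_x}$, $\Vert\nabla^2 u\Vert_{L^2_x}=\Vert\nabla\omega\Vert_{L^2_x}$ for divergence-free $u$ on the torus. The only difference is that you track the constants explicitly (arriving at $C=2/3$), whereas the paper simply asserts the remainder bound $\Vert R\Vert\le Cr^2\Vert\nabla\omega\Vert_{L^2_x}$ for some absolute $C$. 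Your remark about the ``main obstacle'' is slightly overstated: one can equally well apply Minkowski's integral inequality in $\tau$ after passing to the $L^2_m$-norm, and translation invariance in $x$ disposes of the $\tau h$-shift regardless of the order chosen.
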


Before proving Lemma \ref{lem:interpolationestimate}, we remark on its significance in the present context.

\begin{remark}
Note that if $u$ is in $H^\alpha_x$ for some $0<\alpha<1$, then $S_2(u;r)\lesssim \Vert u \Vert_{H^\alpha_x} r^\alpha$ and the estimate \eqref{eq:structfunestimate} implies that
\[
\Vert  u \Vert_{H^1_x} 
\lesssim r\Vert u \Vert_{H^2_x} + r^{\alpha-1} \Vert u \Vert_{H^\alpha_x}.
\]
This estimate can also be obtained from the following interpolation inequality
\[
\Vert u \Vert_{H^1_x} 
\le \Vert u \Vert_{H^2_x}^{1-\theta} \Vert u \Vert_{H^\alpha_x}^{\theta}
\le r \Vert u \Vert_{H^2_x} + r^{(\theta-1)/\theta}\Vert u \Vert_{H^\alpha_x}
\]
for $r>0$, where $\theta$ is chosen such that $1 = 2(1-\theta) + \alpha \theta$, i.e. $\theta = 1/(2-\alpha)$; implying once again an estimate of the form
\[
\Vert u \Vert_{H^1_x} 
\lesssim r \Vert u \Vert_{H^2_x} + r^{\alpha-1}\Vert u \Vert_{H^\alpha_x},
\]
for any $r>0$. Note also that with a suitable choice of $r$, the original interpolation estimate for $\Vert u \Vert_{H^1_x}$ in terms of $\Vert u \Vert_{H^2_x}$, $\Vert u \Vert_{H^\alpha_x}$ can be re-obtained from the latter estimate. 

In this sense, Lemma \ref{lem:interpolationestimate} can be thought to generalize such $H^\alpha$-type interpolation estimates to situations where one only has uniform bounds on the structure functions, instead of an explicit $H^{\alpha}$ estimate.
\end{remark}

\begin{proof}[Proof of Lemma \ref{lem:interpolationestimate}]
By an approximation argument, it is sufficient to prove the claimed inequality for $u\in C^\infty \cap H^2_x$. In this case, it follows from Taylor expansion that for any $x,h$, we have the following equality
\[
u(x+h) - u(x) = h\cdot \nabla u(x) + \int_0^1 (1-t) (h\otimes h) : \nabla^2 u(x+th) \, dt.
\]
Let now $D(x,h) := u(x+h)-u(x)$, $G(x,h) := h\cdot \nabla u(x)$ and 
\[
R(x,h) := \int_0^1 (1-t) (h\otimes h) : \nabla^2 u(x+th) \, dt.
\]
Fix $r>0$. Define a measure $m$ on $D\times D$ by 
\[
\int_{D\times D} f(x,h) \, dm(x,h) =  \int_D \fint_{B_r(0)} f(x,h) \, dh \, dx.
\]
It follows from the equality $G(x,h) = D(x,h) - R(x,h)$ that
\[
\Vert G(x,h) \Vert_{L^2_{x,h}(dm)} 
\le 
\Vert D(x,h) \Vert_{L^2_{x,h}(dm)} 
+
\Vert R(x,h) \Vert_{L^2_{x,h}(dm)} .
\]
We note that by Lemma \ref{lem:hgradu}, we have
\[
\Vert G(x,h) \Vert_{L^2_{x,h}(dm)}
=
\frac{r}{2} \left(\int_D |\nabla u(x)|^2 \, dx\right)^{1/2}
=
\frac{r}{2}\Vert \omega \Vert_{L^2_x(dx)}.
\]
Furthermore, we note that -- by definition -- $\Vert D(x,h) \Vert_{L^2_{x,h}(dm)} = S_2(u;r)$. Finally, it is easy to see that there exists a constant $C$ such that
\[
\Vert R(x,h) \Vert_{L^2_{x,h}(dm)} 
\le Cr^2 \Vert \nabla^2 u \Vert_{L^2_x(dx)}
\le Cr^2 \Vert \nabla \omega \Vert_{L^2_x(dx)}.
\]
Combining these expressions (and possibly enlarging the constant $C$), the claimed estimate follows.
\end{proof}

As mentioned in the introduction, the energy conservation results of \cite{CLNS2016} are a starting point for this article. In \cite{CLNS2016}, the authors characterize energy conservation in terms of uniform a priori estimates on the vorticity $\omega$ of the approximating sequences. In order to introduce the reader to our generalizations of the results of \cite{CLNS2016}, we begin with the following theorem that recasts the energy conservation results of \cite{CLNS2016} in terms of the structure function,
\begin{theorem} \label{thm:algebraic}
Let $u$ be a weak solution of the incompressible Euler equations which is the physical realisation in the zero-viscosity limit of a sequence $u^{\nu_k} \weaklyto u$, as $\nu_k\to 0$. If there exist constants $C, \alpha>0$, such that $S_2(u^{\nu_k}(t);r) \le Cr^\alpha$ for all $t \in [0,T]$, $k \in \mathbb{N}$, then $u$ is energy-conservative.
\end{theorem}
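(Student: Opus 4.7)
The plan is to deduce energy conservation by combining two estimates: strong compactness of $\{u^{\nu_k}\}$ in $C([0,T]; L^2_x)$ together with a proof that the viscous dissipation vanishes, $\nu_k \int_0^T \|\nabla u^{\nu_k}\|_{L^2_x}^2 \, dt \to 0$. With these in hand, one passes to the limit in the Navier-Stokes energy identity $\|u^{\nu_k}(t)\|^2 + 2\nu_k\int_0^t\|\nabla u^{\nu_k}\|^2 \, ds = \|u^{\nu_k}(0)\|^2$ to conclude $\|u(t)\|_{L^2_x} = \|\overline{u}\|_{L^2_x}$ for every $t\in [0,T]$.

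The compactness part is relatively standard: the uniform algebraic decay $S_2(u^{\nu_k}(t);r) \le Cr^\alpha$ provides uniform-in-$k$ equicontinuity of spatial translates of $u^{\nu_k}(t)$ in $L^2_x$, so by Fréchet-Kolmogorov the family $\{u^{\nu_k}(t)\}_k$ is relatively compact in $L^2_x$ at each $t$; combined with the uniform Lipschitz bound in $H^{-L}_x$ from property (1) of Definition \ref{def:approxsolseq}, an Aubin-Lions-Simon argument upgrades the weak convergence $u^{\nu_k} \weaklyto u$ to strong convergence in $C([0,T]; L^2_x)$.

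The core of the proof lies in the dissipation estimate. Since we are in two dimensions and $u^{\nu_k}$ is divergence-free, $\|\nabla u^{\nu_k}\|_{L^2_x} = \|\omega^{\nu_k}\|_{L^2_x}$, so it suffices to control $\nu_k \int_0^T\|\omega^{\nu_k}\|^2\,dt$. Plugging the assumption $S_2(u^{\nu_k}(t);r) \le Cr^\alpha$ into Lemma \ref{lem:interpolationestimate} gives, for each $t > 0$ (where $u^{\nu_k}(t)$ is smooth by parabolic regularization) and each $r > 0$,
\[
\|\omega^{\nu_k}(t)\|_{L^2}^2 \le C_1 r^2\|\nabla \omega^{\nu_k}(t)\|_{L^2}^2 + C_2 r^{2\alpha-2}.
\]
Optimizing the right-hand side in $r$ yields a nonlinear reverse-interpolation of the form
\[
\|\nabla \omega^{\nu_k}(t)\|_{L^2}^2 \gtrsim \|\omega^{\nu_k}(t)\|_{L^2}^{2\mu}, \qquad \mu = \tfrac{2-\alpha}{1-\alpha} > 1,
\]
which, combined with the 2D enstrophy balance $\tfrac{d}{dt}\|\omega^{\nu_k}\|^2 = -2\nu_k\|\nabla\omega^{\nu_k}\|^2$, produces the superlinear differential inequality $y'(t) \le -c\,\nu_k\, y(t)^\mu$ for $y(t) = \|\omega^{\nu_k}(t)\|^2$. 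Because $\mu > 1$, separation of variables yields the universal smoothing bound
\[
\|\omega^{\nu_k}(t)\|_{L^2}^2 \le C_3\, (\nu_k t)^{-(1-\alpha)}, \qquad t \in (0,T],
\]
which is independent of the initial enstrophy. A direct time integration then gives $\nu_k\int_0^T \|\omega^{\nu_k}\|^2 \, dt \lesssim \nu_k^\alpha T^\alpha / \alpha \to 0$ as $\nu_k \to 0$.

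The main obstacle is that the hypothesis does \emph{not} imply a uniform $L^2$-bound on the initial vorticity $\omega^{\nu_k}(0)$, so one cannot simply close the estimate by picking a linear scale $r = r(\nu_k)$ in Lemma \ref{lem:interpolationestimate} and invoking the enstrophy dissipation bound $\nu_k \int_0^T \|\nabla\omega^{\nu_k}\|^2 \, dt \le \tfrac{1}{2}\|\omega^{\nu_k}(0)\|^2$. The nonlinear optimization outlined above circumvents this obstacle: by converting the linear enstrophy dissipation into a superlinear ODE, one obtains a parabolic-type smoothing effect that is uniform both in $k$ and in the (possibly unbounded) initial enstrophy.
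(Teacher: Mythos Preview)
Your proposal is correct and follows essentially the same route as the paper's own proof: strong compactness in $C([0,T];L^2_x)$ from the uniform structure-function bound, then the interpolation estimate of Lemma \ref{lem:interpolationestimate} optimized in $r$ to obtain $\Vert \nabla\omega^{\nu_k}\Vert_{L^2}^2 \gtrsim \Vert \omega^{\nu_k}\Vert_{L^2}^{2(2-\alpha)/(1-\alpha)}$, which together with the enstrophy balance yields the superlinear differential inequality $y' \le -c\nu_k y^{(2-\alpha)/(1-\alpha)}$ and hence the smoothing bound $\Vert \omega^{\nu_k}(t)\Vert_{L^2}^2 \le C(\nu_k t)^{-(1-\alpha)}$ independent of the initial enstrophy. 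Your observation about the ``main obstacle'' --- that no uniform bound on $\Vert \omega^{\nu_k}(0)\Vert_{L^2}$ is available, and that the nonlinear ODE circumvents this via a parabolic-type smoothing effect --- is precisely the point the paper emphasizes as the improvement over the straightforward estimate of Lemma \ref{lem:enstrophyest}.
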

As the proof closely follows the arguments of \cite{CLNS2016}, we provide a sketch below,
\begin{proof}[Sketch of proof]
Under the assumption of algebraic decay of the structure function at each $t \in [0,T]$, we have strong compactness of $u^{\nu_k}$ in $C([0,T];L^2_x)$. In particular, it follows from the weak convergence $u^{\nu_k} \weaklyto u$ that in fact $u^{\nu_k} \to u$ in $C([0,T];L^2_x)$. Thus, for any $t\in [0,T]$, we have (cp. equation \eqref{eq:Eidentity2} in appendix \ref{app:NS})
\begin{align*}
\Vert u(t) \Vert_{L^2_x}^2 
- \Vert \overline{u} \Vert_{L^2_x}^2
&=
\lim_{k\to \infty}
\left(\Vert u^{\nu_k}(t) \Vert_{L^2_x}^2 
- \Vert u^{\nu_k}(0) \Vert_{L^2_x}^2\right)
\\
&=
\lim_{k\to \infty} 2\nu_k \int_0^t \Vert \omega^{\nu_k}(s) \Vert_{L^2_x}^2 \, ds.
\end{align*}
The central point of the argument is to show that under the present assumptions 
\[
\nu_k \int_0^T \Vert \omega^{\nu_k}(t) \Vert_{L^2_x}^2 \, dt \to 0, \quad (\nu_k \to 0).
\]
The vorticity equation implies the following enstrophy equation
\begin{align} \label{eq:vv}
\frac{d}{dt} \Vert \omega^{\nu_k}(t) \Vert_{L^2_x}^2 = -2{\nu_k} \Vert \nabla \omega^{\nu_k}(t) \Vert_{L^2_x}^2.
\end{align}
We remark that $\Vert \omega^{\nu_k}(t)\Vert_{L^2_x} < \infty$ for $t>0$ (cp. Lemma \ref{lem:enstrophyest}).
By assumption and the last lemma, we can now estimate
\begin{align} \label{eq:interp0}
\Vert \omega^{\nu_k} \Vert_{L^2_x} 
\le 
C r \Vert \nabla \omega^{\nu_k} \Vert_{L^2_x} + C r^{\alpha - 1},
\end{align}
where $C,\alpha > 0$ are absolute constants and $r>0$ is arbitrary. Balancing terms, we choose $r = \Vert \nabla \omega^{\nu_k} \Vert_{L^2_x}^{-1/(2-\alpha)}$. We obtain
\begin{align} \label{eq:structfuninterpol}
\Vert \omega^{\nu_k} \Vert_{L^2_x} 
\le 
C \Vert \nabla\omega^{\nu_k} \Vert_{L^2_x}^{(1-\alpha)/(2-\alpha)},
\end{align}
implying (together with equation \eqref{eq:vv}) that there is a constant $c>0$ such that
\[
\frac{d}{dt} \Vert \omega^{\nu_k}(t) \Vert_{L^2_x}^2
\le 
-c{\nu_k} \Vert \omega^{\nu_k}(t) \Vert_{L^2_x}^{2(2-\alpha)/(1-\alpha)}
= 
-c {\nu_k} \Vert \omega^{\nu_k}(t) \Vert_{L^2_x}^{2(2+\delta)},
\]
where $\delta > 0$ is chosen so that $2(2+\delta) = 2(2-\alpha)/(1-\alpha)$, i.e.
\begin{equation}\label{eq:deltaalpha}
\delta = \frac{\alpha}{1-\alpha}.
\end{equation}
If we now write $y_{\nu_k} = \Vert \omega^{\nu_k} \Vert_{L^2_x}^2$, then we have obtained the following inequality
\begin{align} \label{eq:diffinequalvort}
\frac{d}{dt} y_{\nu_k} \le -c {\nu_k} y_{\nu_k}^{2+\delta}.
\end{align}
This differential inequality is of the same form as the one that has been used in \cite{CLNS2016} to prove energy conservation provided $\overline{\omega} \in L^p_x$ ($p>1$). Following the argument in \cite{CLNS2016}, one shows that \eqref{eq:diffinequalvort} implies that
\begin{align} \label{eq:enstrophyimprov}
y_{\nu_k}(t) 
=
\Vert \omega^{\nu_k}(t) \Vert_{L^2_x}^2
\le \frac{C(\alpha)}{({\nu_k} t)^{1-\alpha}}.
\end{align}
Note that since $\alpha > 0$, this last estimate is an improvement over the straightforward estimate from Navier-Stokes equations (see Lemma \ref{lem:enstrophyest}), which would instead have only provided an upper bound
\[
\Vert \omega^{\nu_k}(t) \Vert_{L^2_x}^2
\le \frac{C}{{\nu_k} t},
\]
which formally corresponds to setting $\alpha = 0$ in \eqref{eq:enstrophyimprov}. This improved estimate is crucial to prove energy conservation, since we now find that
\[
{\nu_k} \int_0^T \Vert \omega^{\nu_k}(t) \Vert_{L^2_x}^2 \, dt
\le 
C(\alpha) {\nu_k}^{1-(1-\alpha)} \int_0^T \frac{dt}{t^{1-\alpha}}
= 
\left(\frac{C(\alpha)T^{\alpha}}{\alpha}\right) {\nu_k}^\alpha \to 0,
\]
as ${\nu_k} \to 0$. This shows that the energy dissipation vanishes at a rate $\lesssim \nu_k^{\alpha}$ as ${\nu_k} \to 0$. Evidently, based on this estimate, the energy dissipation is expected to be larger for rough flows (corresponding to smaller values of $\alpha>0$). Finally, in the limit $\alpha \to 0$, in which case we have no uniform control on the structure functions, nothing can be said about energy conservation.
\end{proof}
The central point of the proof of Theorem \ref{thm:algebraic}, as outlined above, is that uniform control on the structure functions implies an improved estimate for $\Vert \omega^{\nu_k}(t) \Vert_{L^2_x}^2$ over the straightforward estimate provided by Lemma \ref{lem:enstrophyest}. Based on this improved enstrophy estimate, it can then be shown that the energy dissipation
\[
{\nu_k} \int_0^T \Vert \omega^{\nu_k}(t) \Vert_{L^2_x}^2 \, dt \to 0, \quad ({\nu_k} \to 0),
\]
converges to $0$, hence implying energy conservation in the zero-viscosity limit. 

More precisely, an algebraic decay of the structure functions
\[
S_2(u^{\nu_k};r) \le C r^\alpha,
\]
implies a similar bound on the energy dissipation
\begin{align}\label{eq:algebraicenergy}
{\nu_k}  \int_0^T \Vert \omega^{\nu_k}(t) \Vert_{L^2_x}^2 \, dt \le C{\nu_k}^\alpha.
\end{align}
\revision{
\begin{remark}
Recently, Drivas and Eyink \cite{DrivasEyink2019} have obtained a similar upper bound on the energy dissipation of Leray solutions in the higher dimensional case, but under stronger assumptions on the sequence $u^{\nu_k}$. In particular, it is shown in \cite[Lemma 1]{DrivasEyink2019}, that if $u^{\nu_k} \in L^3([0,T];B^{\sigma,\infty}_3(\T^d))$, $\sigma \in (0,1]$ are uniformly bounded as $\nu_k\to 0$, then the energy dissipation is bounded for some $\nu_k$-independent constant $C$ by:
\[
\int_0^T \int_{\T^d} \varepsilon[u^{\nu_k}] \, dx \, dt \le C \nu_k^{\frac{3\sigma-1}{\sigma+1}}.
\]
Here, the energy dissipation measure $\varepsilon[u^{\nu_k}]$ satisfies $\varepsilon[u^\nu] \ge \nu|\nabla u^\nu|^2$ for $d > 2$, and $\varepsilon[u^\nu] = \nu|\nabla u^\nu|^2$ in the two-dimensional case, $d=2$. Above, $B^{\sigma,\infty}_3(\T^d)$ denotes the corresponding Besov space.
\end{remark}
}

Based on the bound \eqref{eq:algebraicenergy} in the two-dimensional case, it is now natural to ask whether a uniform (but not necessarily algebraic) decay such as,
\[
S_2(u^{\nu_k};r) \le \phi(r),
\]
with $\phi(r)$ being a \emph{modulus of continuity}, i.e. the function $\phi: [0,\infty) \to [0,\infty)$, such that $\phi(r) \ge 0$ for all $r\ge 0$ and $\phi(r) \to 0$, as $r\to 0$, will imply an estimate of the form, 
\[
{\nu_k}  \int_0^T \Vert \omega^{\nu_k}(t) \Vert_{L^2_x}^2 \, dt = o_{{\nu_k}}(1) \to 0, \quad (\nu_k\to 0)?
\]
Here, we would clearly expect the decay of $o_{\nu_k}(1) \to 0$ to depend on the properties of the modulus of continuity $\phi(r)$, as $r\to 0$.

As we will prove below, the answer to this question is positive, and the energy dissipation term can be shown to converge to zero as $\nu_k\to0$, provided that the structure functions decay uniformly, though not necessarily algebraically. However, it turns out that a more natural way to measure the uniform decay of the sequence $u^{{\nu_k}}$ is in terms of the \emph{time-integrated} structure function $S_2^T(u^{\nu_k};r)$ \eqref{eq:structfunT}, instead of $S_2(u^{\nu_k};r)$ \eqref{eq:structfun}. In particular, uniform decay of this structure function allows us to precisely characterize compactness of sequences in $L^p(0,T;L^2_x)$, for $1\le p < \infty$, as stated in the proposition below.
\begin{proposition} \label{prop:compactchar}
Fix $1\le p < \infty$. Let $\{u^{\nu_k}\}_{k\in \mathbb{N}}$ be an approximate solution sequence of the incompressible Euler equations. Then $u^{\nu_k}$ is strongly relatively compact in $L^p_t(0,T;L^2_x)$ if, and only if, there exists a uniform modulus of continuity $\phi(r)$, such that
\[
S^T_2(u^{\nu_k};r) \le \phi(r), 
\quad \forall \, r > 0,
\; \forall \, k\in \mathbb{N}.
\]
\end{proposition}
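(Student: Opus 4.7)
The plan is to prove the two implications separately. Both directions rely on bridging between $L^p_tL^2_x$ and $L^2_tL^2_x$ (the natural space for $S^T_2$) via interpolation with the uniform $L^\infty_tL^2_x$-bound inherent in any approximate solution sequence.

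For the sufficiency direction ($\Leftarrow$), I would use a spatial mollification argument. Let $\eta_\epsilon$ be a standard mollifier supported in $B_\epsilon(0)$ and set $u^{\nu_k}_\epsilon \defeq u^{\nu_k} \ast_x \eta_\epsilon$. Jensen's inequality immediately yields $\Vert u^{\nu_k}(t) - u^{\nu_k}_\epsilon(t)\Vert_{L^2_x}^2 \le C\, S_2(u^{\nu_k}(t);\epsilon)^2$, so that integrating in time gives
\[
\Vert u^{\nu_k} - u^{\nu_k}_\epsilon\Vert_{L^2_tL^2_x} \le C\, S^T_2(u^{\nu_k};\epsilon) \le C\,\phi(\epsilon),
\]
uniformly in $k$. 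Interpolating with the uniform $L^\infty_tL^2_x$-bound upgrades this to uniform (in $k$) convergence $u^{\nu_k}_\epsilon \to u^{\nu_k}$ in $L^p_tL^2_x$ for every $1\le p < \infty$. For fixed $\epsilon>0$, convolution with $\eta_\epsilon$ transfers the $L^\infty_tL^2_x$-bound into a uniform $L^\infty_tH^1_x$-bound on $u^{\nu_k}_\epsilon$, while the $\Lip([0,T];H^{-L})$-bound from Definition \ref{def:approxsolseq}(1) yields a uniform $\Lip([0,T];L^2_x)$-bound for $u^{\nu_k}_\epsilon$ (again, for fixed $\epsilon$). Arzela-Ascoli combined with the compact embedding $H^1 \embedsc L^2$ then gives relative compactness of $\{u^{\nu_k}_\epsilon\}_k$ in $C([0,T];L^2_x)$. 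A standard total-boundedness argument — covering the $L^p_tL^2_x$-precompact family $\{u^{\nu_k}_\epsilon\}_k$ by finitely many small balls, then using the uniform approximation $u^{\nu_k} \approx u^{\nu_k}_\epsilon$ to cover $\{u^{\nu_k}\}_k$ — concludes the sufficiency direction.

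For the necessity direction ($\Rightarrow$), I would argue by contradiction: suppose there exist $\eta>0$, $r_n \to 0$ and $k_n$ such that $S^T_2(u^{\nu_{k_n}};r_n) > \eta$. By relative compactness (and passing to a subsequence), $u^{\nu_{k_n}} \to u$ strongly in $L^p_tL^2_x$. Interpolating with the uniform $L^\infty_tL^2_x$-bound (for $1\le p<2$) or using H\"older in time (for $p\ge 2$) transfers the convergence to $L^2_tL^2_x$. Since $v \mapsto S^T_2(v;r)$ is a seminorm on $L^2_tL^2_x$, it is continuous with respect to $L^2_tL^2_x$-convergence, so
\[
S^T_2(u^{\nu_{k_n}}; r_n) \le 2\Vert u^{\nu_{k_n}}-u\Vert_{L^2_tL^2_x} + S^T_2(u;r_n),
\]
where both terms tend to $0$ (the second by $L^2$-continuity of translations applied to the fixed limit $u$), contradicting the assumed lower bound. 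Defining $\phi(r) \defeq \sup_k S^T_2(u^{\nu_k};r)$ then yields the required modulus of continuity.

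The main technical point is the repeated time-interpolation between $L^p_t$, $L^2_t$ and $L^\infty_t$ needed to handle all $1\le p<\infty$ simultaneously, together with the careful use of the $\Lip([0,T];H^{-L})$-hypothesis of Definition \ref{def:approxsolseq}(1) — which is the only source of time equicontinuity and hence indispensable for the Arzela-Ascoli step in the sufficiency direction. Neither issue is deep, but the bookkeeping must be carried out precisely.
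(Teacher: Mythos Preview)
Your proposal is correct and uses the same essential ingredients as the paper: the reduction to $p=2$ via interpolation with the uniform $L^\infty_tL^2_x$-bound, spatial mollification to obtain the uniform approximation $\Vert u^{\nu_k}-u^{\nu_k}_\epsilon\Vert_{L^2_tL^2_x}\le C\phi(\epsilon)$, and the crucial use of the $\Lip([0,T];H^{-L})$-hypothesis to produce time equicontinuity of the mollified family.

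The only difference is in packaging. For sufficiency, the paper invokes Simon's characterisation of precompact sets in Bochner spaces (verifying separately precompactness of $\{\int_{t_1}^{t_2}u^{\nu_k}(t)\,dt\}\subset L^2_x$ via Kolmogorov, and uniform time-shift continuity $\int_0^{T-h}\Vert u^{\nu_k}(t+h)-u^{\nu_k}(t)\Vert_{L^2_x}^2\,dt\to 0$ via mollification plus an $L^2_x$--$H^1_x$--$H^{-L}_x$ interpolation), whereas you bypass Simon by running Arzel\`a--Ascoli directly on the mollified family $\{u^{\nu_k}_\epsilon\}_k\subset C([0,T];L^2_x)$ and then invoking total boundedness. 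Your route is slightly more self-contained; the paper's route makes more explicit contact with the standard Bochner-compactness toolbox. For necessity, the paper argues directly by defining $\phi(r):=\max_{u\in K}S^T_2(u;r)$ on the compact closure $K$ and using a finite $\epsilon/3$-cover, while you phrase the same idea by contradiction via a convergent subsequence; these are logically equivalent presentations.
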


We leave the proof of this technical proposition to appendix \ref{sec:proofcompactchar}.
Now, we are ready to state the main result of this section about characterizing energy conservation of approximating sequences to the Euler equations \eqref{eq:Eulerfull}, in terms of the structure function. We have the following theorem:
\begin{theorem}\label{thm:Econstime}
Let $\overline{u}\in L^2_x$ be initial data for the incompressible Euler equations. Let $u \in L^\infty_t(0,T;L^2_x)$ be a physically realisable solution of the incompressible Euler equations with initial data $\overline{u}$. Let $u^{\nu_k} \weaklyto u$ be a physical realisation of $u$. Then the following are equivalent:
\begin{enumerate}
\item $u^{\nu_k} \to u$ strongly in $L^p(0,T;L^2_x)$ for some $1\le p < \infty$,
\item There exists a bounded modulus of continuity $\phi(r)$, such that (uniformly in $k$)
\[
S_2^T(u^{\nu_k};r) \le \phi(r), \quad \forall \, r\ge 0, \quad \forall \, k \in \mathbb{N}.
\]
\item $u$ is a energy conservative weak solution.
\end{enumerate}
\end{theorem}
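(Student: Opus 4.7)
The three-way equivalence will be proved via the cycle (3) $\Rightarrow$ (1) $\iff$ (2) $\Rightarrow$ (3). The middle equivalence is essentially the content of Proposition \ref{prop:compactchar}: a physical realisation $\{u^{\nu_k}\}$ is in particular an approximate solution sequence (the Lipschitz bound in $H^{-L}_x$ follows from the Navier--Stokes equations), so uniform decay $S_2^T(u^{\nu_k};r)\le\phi(r)$ is equivalent to strong relative compactness in $L^p_tL^2_x$; since the weak limit $u$ is unique, any such relative compactness upgrades to strong convergence of the whole sequence.

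For (3) $\Rightarrow$ (1), I would argue via the Hilbert-space principle that weak convergence together with convergence of norms implies strong convergence. The approximate-solution-sequence hypothesis (uniform $L^\infty_tL^2_x$-bound and Lipschitz continuity in $H^{-L}_x$) yields pointwise weak convergence $u^{\nu_k}(t)\weaklyto u(t)$ in $L^2_x$ for every $t\in[0,T]$. The Navier--Stokes energy inequality together with the strong initial convergence $u^{\nu_k}(0)\to\overline u$ in $L^2_x$ gives $\limsup_k\|u^{\nu_k}(t)\|_{L^2_x}^2\le\|\overline u\|_{L^2_x}^2$, while weak lower semicontinuity combined with the assumed energy conservation $\|u(t)\|_{L^2_x}^2=\|\overline u\|_{L^2_x}^2$ forces $\|u^{\nu_k}(t)\|_{L^2_x}\to\|u(t)\|_{L^2_x}$ pointwise in $t$. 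Pointwise strong $L^2_x$-convergence follows, and dominated convergence (using the uniform $L^\infty_tL^2_x$ bound) promotes this to strong $L^p_tL^2_x$-convergence for every $1\le p<\infty$.

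For (2) $\Rightarrow$ (3), the starting point is the integrated Navier--Stokes energy identity
\[
\int_0^T\|u^{\nu_k}(t)\|_{L^2_x}^2\,dt = T\|u^{\nu_k}(0)\|_{L^2_x}^2 - 2\nu_k\int_0^T(T-s)\|\omega^{\nu_k}(s)\|_{L^2_x}^2\,ds.
\]
The strong $L^2_tL^2_x$-convergence provided by (2) via Proposition \ref{prop:compactchar} lets me pass to the limit on the LHS, so that energy conservation of $u$ reduces (in view of the a priori inequality $\|u(t)\|_{L^2_x}^2\le\|\overline u\|_{L^2_x}^2$) to showing that $\nu_k\int_0^T(T-s)\|\omega^{\nu_k}\|_{L^2_x}^2\,ds\to 0$. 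Applying Lemma \ref{lem:interpolationestimate} pointwise in $t$, squaring, and integrating against $\nu_k(T-s)\,ds$ yields, for each $r>0$,
\[
\nu_k\int_0^T(T-s)\|\omega^{\nu_k}\|_{L^2_x}^2\,ds \;\le\; 2C^2 T r^2\,\nu_k\int_0^T\|\nabla\omega^{\nu_k}\|_{L^2_x}^2\,ds + \frac{8T\,\nu_k\phi(r)^2}{r^2},
\]
where the second term uses hypothesis (2) on $S_2^T$. For fixed $r$, this second term vanishes as $\nu_k\to 0$. The first term I would control by splitting $[0,T]$ at some $\delta>0$: on $[\delta,T]$ the enstrophy balance and the smoothing estimate of Lemma \ref{lem:enstrophyest} give $\nu_k\int_\delta^T\|\nabla\omega^{\nu_k}\|^2\,ds\lesssim 1/(\nu_k\delta)$, while $[0,\delta]$ is treated directly using the energy identity. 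Choosing $r=r_{\nu_k}\to 0$ and $\delta=\delta_{\nu_k}\to 0$ jointly (for example $r_{\nu_k}^2 = \nu_k\phi(\sqrt{\nu_k})$, under the harmless normalisation that $\phi$ is non-decreasing) then balances the two contributions and drives the expression to zero.

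The principal technical difficulty is this last balancing. Because $\overline u\in L^2_x$ carries no a priori enstrophy bound, the quantity $\nu_k\int_0^T\|\nabla\omega^{\nu_k}\|^2\,ds$ is not uniformly controlled as $\nu_k\to 0$; hence the interpolation parameter $r$ and the cut-off $\delta$ must be coupled to $\nu_k$ at a rate dictated by the modulus of continuity $\phi$, absorbing the short-time boundary contribution against the decay of $\phi(r_{\nu_k})$. This is the analogue, in the setting of a general modulus of continuity, of the explicit power-type rate that made the algebraic case of Theorem \ref{thm:algebraic} transparent.
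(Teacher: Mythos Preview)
Your treatment of $(1)\Leftrightarrow(2)$ and $(3)\Rightarrow(1)$ matches the paper's (the paper argues norm convergence at the level of $L^2_tL^2_x$ rather than pointwise in $t$, but the content is the same).

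For $(2)\Rightarrow(3)$ you take a genuinely different route from the paper. The paper couples the interpolation estimate with the enstrophy evolution to obtain a differential inequality for $y_\nu(t)=\nu\int_\delta^t\Vert\omega^\nu\Vert_{L^2_x}^2\,ds$ and then invokes a technical construction (their Lemma~\ref{lem:improvmod}) producing an invertible majorant $F$ of a sublinear function, so that an ODE comparison yields $y_\nu(T)\to 0$. Your idea is more elementary: bound $\nu_k\int_\delta^T\Vert\nabla\omega^{\nu_k}\Vert_{L^2_x}^2\,ds$ once and for all by $\Vert\omega^{\nu_k}(\delta)\Vert_{L^2_x}^2/2\lesssim 1/(\nu_k\delta)$ via the enstrophy balance and Lemma~\ref{lem:enstrophyest}, insert this into Lemma~\ref{lem:interpolationestimate}, and then choose $r_k$ to kill both terms. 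With $r_k^2=\nu_k\phi(\sqrt{\nu_k})$ (and $\phi$ assumed nondecreasing, $\phi(\sqrt{\nu_k})\le 1$ for large $k$) the two contributions are $\phi(\sqrt{\nu_k})/\delta$ and $\phi(r_k)^2/\phi(\sqrt{\nu_k})\le\phi(\sqrt{\nu_k})$, both tending to $0$ for fixed $\delta$. This bypasses the paper's ODE machinery entirely and is a nice simplification.

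There is, however, a concrete gap in how you organize the split. You propose to decompose $\nu_k\int_0^T\Vert\nabla\omega^{\nu_k}\Vert_{L^2_x}^2\,ds$ at $\delta$ and handle the $[0,\delta]$ piece ``directly using the energy identity''. But the energy identity \eqref{eq:Eidentity2} controls $\nu_k\int\Vert\omega^{\nu_k}\Vert_{L^2_x}^2$, \emph{not} $\nu_k\int\Vert\nabla\omega^{\nu_k}\Vert_{L^2_x}^2$; the latter is governed by the enstrophy balance, which needs $\omega^{\nu_k}(0)\in L^2_x$ and is unavailable for merely $L^2_x$ initial data. Relatedly, sending $\delta=\delta_{\nu_k}\to 0$ jointly with $\nu_k$ leaves you with the boundary term $\Vert u^{\nu_k}(0)\Vert_{L^2_x}^2-\Vert u^{\nu_k}(\delta_{\nu_k})\Vert_{L^2_x}^2$, for which you have no uniform smallness (strong $L^2_tL^2_x$ convergence does not give control at a moving time $\delta_{\nu_k}$).

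The fix is minor: split the \emph{original} dissipation $\nu_k\int_0^T(T-s)\Vert\omega^{\nu_k}\Vert_{L^2_x}^2\,ds$ at a \emph{fixed} $\delta$. On $[\delta,T]$ apply Lemma~\ref{lem:interpolationestimate} and your $r_k$-balancing to get $\nu_k\int_\delta^T\Vert\omega^{\nu_k}\Vert_{L^2_x}^2\,ds\to 0$. For the $[0,\delta]$ piece, use \eqref{eq:Eidentity2} to write it as $\tfrac12(\Vert u^{\nu_k}(0)\Vert_{L^2_x}^2-\Vert u^{\nu_k}(\delta)\Vert_{L^2_x}^2)$, pass to the limit in $k$ for a.e.\ $\delta$ (strong $L^2_tL^2_x$ convergence), obtaining $\tfrac12(\Vert\overline u\Vert_{L^2_x}^2-\Vert u(\delta)\Vert_{L^2_x}^2)$, and finally let $\delta\to 0$ using right-continuity of $t\mapsto\Vert u(t)\Vert_{L^2_x}$ at $t=0$ (which follows from admissibility plus weak continuity). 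This last step is exactly the paper's concluding paragraph; the difference is that your balancing argument replaces its differential-inequality core.
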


\begin{proof}
The equivalence of (1) and (2) in the above theorem follows from proposition \ref{prop:compactchar}. To prove that $(3) \Rightarrow (1)$, we have to show that if $u$ is a \emph{energy-conservative} weak solution, and if $u^{\nu_k} \weaklyto u$ is the weak limit in $L^2_tL^2_x$ of an approximate solution sequence $u^{\nu_k}$ as $\nu_k \to 0$, then we must necessarily have that $u^{\nu_k} \to u$ strongly in $L^2_tL^2_x$.

Indeed, the fact that $u$ is energy-conservative and the lower semi-continuity of the norm under weak convergence imply that 
\[
T \Vert u \Vert_{L^2_x}^2 = \Vert u \Vert_{L^2_t L^2_x}^2 \le \liminf_{k\to \infty} \Vert u^{\nu_k} \Vert_{L^2_t L^2_x}^2.
\]
Since $t\mapsto \Vert u^{\nu_k}(t) \Vert_{L^2_x}^2$ is non-increasing and $u^{\nu_k}(t=0)\to \overline{u}$ strongly in $L^2_x$, we also have
\begin{align*}
\limsup_{k\to \infty} \Vert u^{\nu_k} \Vert_{L^2_t L^2_x}^2
&=
\limsup_{k\to \infty}\int_0^T \Vert u^{\nu_k}(t) \Vert_{L^2_x}^2 \, dt
\\
&\le 
\limsup_{k\to \infty} T \Vert u^{\nu_k}(t=0) \Vert_{L^2_x}^2
\\
&= T \Vert \overline{u} \Vert_{L^2_x}^2 = T\Vert u \Vert_{L^2_x}^2 = \Vert u \Vert_{L^2_t L^2_x}^2.
\end{align*}
Thus, 
\[
 \Vert u \Vert_{L^2_t L^2_x}^2 \le \liminf_{k\to \infty} \Vert u^{\nu_k} \Vert_{L^2_t L^2_x}^2
 \le \limsup_{k\to \infty} \Vert u^{\nu_k} \Vert_{L^2_t L^2_x}^2
 \le 
 \Vert u \Vert_{L^2_t L^2_x}^2.
\]
We therefore conclude that $\lim_{k\to \infty} \Vert u^{\nu_k} \Vert_{L^2_t L^2_x} = \Vert u \Vert_{L^2_t L^2_x}$. Convergence of the norm, together with weak convergence $u^{\nu_k} \weaklyto u$ in $L^2_t L^2_x$ implies strong convergence in $L^2_t L^2_x$.

Finally, we need to prove $(2) \Rightarrow (3)$: I.e. we assume that there exists a modulus of continuity $\phi(r)$, and a physical realisation $u^{\nu_k}\weaklyto u$ of $u$, such that we have a uniform bound
\[
S^T_2(u^{\nu_k};r) \le \phi(r), \quad \forall r\ge 0, \quad \forall k\in \mathbb{N}.
\]
To simplify the notation, we will drop the subscript $k$ in the following, and denote the sequence $\nu_k \to 0$ instead by $\nu \to 0$. 

We want to show that $u$ is energy conservative.
To prove this, we fix $\delta>0$, and observe from the vorticity transport equation that, 
\begin{equation} \label{eq:enstrophyest}
\Vert \omega^\nu(t) \Vert_{L^2_x}^2 
=
\Vert \omega^\nu(\delta) \Vert_{L^2_x}^2
-\nu \int_\delta^t \Vert \nabla \omega^\nu(s) \Vert_{L^2_x}^2 \, ds.
\end{equation}
From the structure function estimate \eqref{eq:structfunestimate} it follows that we have a bound
\begin{equation} \label{eq:interpest}
\int_\delta^t \Vert \omega^\nu(s) \Vert_{L^2_x}^2 \, ds
\le 
Cr^2 \int_\delta^t \Vert \nabla \omega^\nu(s) \Vert_{L^2_x}^2 \, ds
+ C \frac{\phi(r)^2}{r^2}.
\end{equation}
for all $r>0$. Choosing $r$ to balance terms on the right-hand side, we make the particular choice 
\[
r = \frac{\phi(\bar{r})^{1/2}}{\left(\int_\delta^t \Vert \nabla\omega^\nu(s) \Vert_{L^2_x}^2 \, ds \right)^{1/4}},
\quad
\text{where }
\bar{r} := \frac{\bar{\phi}^{1/2}}{\left(\int_\delta^t \Vert \nabla\omega^\nu(s) \Vert_{L^2_x}^2 \, ds \right)^{1/4}},
\]
Here, $\bar{\phi}>0$ provides an upper bound $\phi(r) \le \bar{\phi}$. The first term of \eqref{eq:interpest} is given by
\[
Cr^2 \int_\delta^t \Vert \nabla \omega^\nu(s) \Vert_{L^2_x}^2 \, ds
=
C \phi(\bar{r}) \left(\int_{\delta}^t \Vert \nabla \omega^\nu(s) \Vert_{L^2_x}^2 \, ds\right)^{1/2}.
\]
To estimate the second term, we note that $r\le \bar{r}$ implies\footnote{By replacing $\phi(r)$ by 
$
\Phi(r) := \sup_{s\le r} \phi(s),
$ 
if necessary, we may wlog assume that $r \mapsto \phi(r)$ is monotonically increasing.} $\phi(r) \le \phi(\bar{r})$, and hence
\[
C\frac{\phi(r)^2}{r^2}
\le 
C \phi(\bar{r}) \left(\int_{\delta}^t \Vert \nabla \omega^\nu(s) \Vert_{L^2_x}^2 \, ds\right)^{1/2}.
\]
Estimating the right-hand side terms of \eqref{eq:interpest} in this manner and taking the square of both sides, we deduce that
\begin{equation} \label{eq:interpest2}
\left(\int_\delta^t \Vert \omega^\nu(s) \Vert_{L^2_x}^2 \, ds\right)^2
\le 
C\phi(\bar{r})^2 \int_\delta^t \Vert \nabla \omega^\nu(s) \Vert_{L^2_x}^2 \, ds.
\end{equation}
Let us denote $y_\nu(t) := \nu \int_\delta^t \Vert \omega^\nu(s) \Vert_{L^2_x}^2 \, ds$, and $z_\nu(t) := \int_\delta^t \Vert \nabla \omega^\nu(s) \Vert_{L^2_x}^2 \, ds$. Equation \eqref{eq:interpest2} can be re-written in the form 
\begin{equation} \label{eq:interpest3}
(y_\nu/\nu)^2 \le C\phi\left(\beta z_\nu^{-1/4}\right) z_\nu, \quad \beta := \bar{\phi}^{1/2}.
\end{equation}
Consider now the function $f:[0,\infty) \to [0,\infty)$, $z\mapsto f(z) := C\phi(\beta z^{-1/4}) z$ for $z>0$, and $f(0) := 0$. Since $\phi(r)$ is a bounded modulus of continuity, we have 
\begin{align} \label{eq:P1}
\sup_{z\in (0,\infty)} f(z)/z = \sup_{r\in (0,\infty)} C \phi(r) < \infty.
\tag{P1}
\end{align}
Furthermore, we note that 
\begin{align} \label{eq:P2}
\limsup_{z\to \infty} f(z)/z = \lim_{z\to \infty} C\phi(\beta z^{-1/4}) = \lim_{r\to 0} C\phi(r) = 0,
\tag{P2}
\end{align}
i.e. $f(z) \ll z$ has sub-linear growth. Intuitively, we would therefore expect the inverse of $f(z)$ to grow super-linearly, $f^{-1}(y) \gg y$, as $y\to \infty$. Unfortunately, there is no guarantee that $z\mapsto f(z)$ is invertible. This technical point is handled by Lemma \ref{lem:improvmod}: Since $f(z)$ satisfies \eqref{eq:P1} and \eqref{eq:P2}, Lemma \ref{lem:improvmod} shows that there exists a strictly monotonically increasing function $F: [0,\infty) \to [0,\infty)$, $z\mapsto F(z)$, such that $f(z) \le F(z)$ for all $z$, and such that we still have $F(z) \ll z$, as $z\to \infty$. Furthermore, the inverse  of $F(z)$,  $F^{-1}:[0,\infty) \to [0,\infty)$, $y \mapsto F^{-1}(y)$ is a monotonically increasing function which can be represented in the form 
\begin{align} \label{eq:invrs}
F^{-1}(y) =  y\sigma(\sqrt{y}),
\end{align}
where $\sigma$ is itself monotonically increasing, continuous, and bounded from below: $\sigma(\sqrt{y})\ge \sigma_0$ for some $\sigma_0>0$, and $\sigma(\sqrt{y})\to \infty$ as $y\to \infty$. 

By \eqref{eq:interpest3}, the definition of $f(z)$ and the fact that $f(z) \le F(z)$, we have $(y_\nu/\nu)^2 \le f(z_\nu) \le F(z_\nu)$, uniformly for all $\nu$. By the monotonicity of $F^{-1}(y)$, this implies that $F^{-1}((y_\nu/\nu)^2) \le z_\nu$ for all $\nu$ and further implies that,
\[
\frac{1}{\nu^2}\, y_\nu(t)^2 \sigma\left(\frac{y_\nu(t)}{\nu}\right) \le z_\nu(t),
\]
by our representation of $F^{-1}$, given in equation \eqref{eq:invrs}. Recalling that $z_\nu(t) := \int_\delta^t \Vert \nabla \omega^\nu(s) \Vert_{L^2_x}^2 \, ds$, we can equivalently write this estimate in the form
\begin{equation} \label{eq:interpest4}
-\nu^2 \int_\delta^t \Vert \nabla \omega^\nu(s) \Vert_{L^2_x}^2 \, ds
=
-\nu^2 z_\nu(t)
\le
-y_\nu(t)^2 \sigma\left(\frac{y_\nu(t)}{\nu}\right),
\end{equation}
and we note that $y_\nu(t) = \nu \int_\delta^t \Vert \omega^\nu(s) \Vert_{L^2_x}^2 \, ds$, by definition. Making use also of the apriori inequality $\nu \Vert \omega^\nu(\delta) \Vert_{L^2_x}^2 \le \Vert \overline{u}\Vert_{L^2_x}^2/\delta$ (cp. Lemma \ref{lem:enstrophyest}), it follows from estimate \eqref{eq:interpest4} and the enstrophy equation \eqref{eq:enstrophyest} that
\begin{equation} \label{eq:diffineq_}
\frac{d}{dt} y_\nu(t) 
\le 
\frac{\Vert \overline{u}\Vert_{L^2_x}^2}{\delta}
-y_\nu(t)^2 \sigma\left(\frac{y_\nu(t)}{\nu}\right).
\end{equation}
As a consequence of the last inequality \eqref{eq:diffineq_}, we now claim that for any $\epsilon >0$, there exists a $\nu_0(\epsilon) > 0$ such that $y_\nu(t) \le \epsilon$ for all $\nu \le \nu_0(\epsilon)$. Indeed, if $y_\nu(t) \ge \epsilon$, then the differential inequality \eqref{eq:diffineq_} above implies that
\[
\frac{d}{dt} y_\nu(t) 
\le 
\frac{\Vert \overline{u}\Vert_{L^2_x}^2}{\delta}
-\epsilon^2 {\sigma}\left(\frac{\epsilon}{\nu}\right).
\]
We recall that by construction $\sigma$ is a monotonically increasing function, and ${\sigma}(y) \to \infty$ as $y\to \infty$. Therefore, choosing $\nu_0 = \nu_0(\epsilon,\sigma,\delta,\Vert \overline{u}\Vert_{L^2_x})$ sufficiently small, we can ensure that for all $\nu \le \nu_0$ we have ${\sigma}\left(\frac{\epsilon}{\nu}\right) \ge {\Vert \overline{u}\Vert_{L^2_x}^2}/({\epsilon^2 \delta})$, or equivalently
\[
\frac{\Vert \overline{u}\Vert_{L^2_x}^2}{\delta}
-\epsilon^2 {\sigma}\left(\frac{\epsilon}{\nu}\right)
\le 0.
\]
This implies that $dy_\nu/dt \le 0$ whenever $y_\nu(t) \ge \epsilon$ and $\nu \le \nu_0$. Since $t\mapsto y_\nu(t) \ge 0$ is continuously differentiable for any $\nu>0$ and since $y_\nu(\delta) = 0$, independently of $\nu$, this implies that $y_\nu(t)$ cannot leave the set $\{y\in \mathbb{R} \, | \, 0\le y\le \epsilon\}$ for any $t\in [\delta,T]$, provided that $\nu \le \nu_0$. In particular, we conclude that for $t=T$:
\[
\limsup_{\nu \to 0} \nu \int_{\delta}^T \Vert \omega^\nu(t) \Vert_{L^2_x}^2 \, dt
=
\limsup_{\nu\to 0} y_\nu(T)
\le 
\sup_{\nu \le \nu_0} y_\nu(T)
\le \epsilon.
\]
As $\epsilon>0$ was arbitrary, this is only possible if
\begin{equation} \label{eq:diffto0}
\lim_{\nu\to 0} \nu \int_{\delta}^T \Vert \omega^\nu(t) \Vert_{L^2_x}^2 \, dt = 0.
\end{equation}
To summarize: Assuming that $S_2^T(u^\nu;r) \le \phi(r)$ is uniformly bounded by a modulus of continuity $\phi(r)$, we have shown that for any $\delta > 0$, the expression $\nu \int_{\delta}^T \Vert \omega^\nu(t) \Vert_{L^2_x}^2 \, dt$ converges to zero as $\nu \to 0$. 

To conclude our proof, we will finally show that the limit $u$ of the $u^\nu$ is energy conservative. To see why, let $\epsilon > 0$ be arbitrary. We recall that the limit $u \in \mathrm{Lip}([0,T];H^{-L}_x) \cap L^\infty(0,T;L^2_x)$ is an admissible weak solution of the incompressible Euler equations with initial data $\overline{u}$. Admissibility implies that $t \mapsto \Vert u(t)\Vert_{L^2_x}$ is (right-)continuous at $t=0$. Choose $\delta > 0$ such that $0\le t \le \delta$ implies 
\begin{align} \label{eq:rightcont}
0 \explain{\le}{\text{(admissible)}}
\Vert \overline{u} \Vert_{L^2_x}^2 - \Vert u(t) \Vert_{L^2_x}^2
\explain{\le}{\text{(right-cont.)}} \epsilon.
\end{align}
Choosing $\delta>0$ even smaller if necessary, we may wlog assume that $\delta \le \epsilon$. Finally, extracting a further subsequence $\nu\to 0$ if necessary (not reindexed), we can ensure that $\Vert u^\nu (t)\Vert_{L^2_x} \to \Vert u (t)\Vert_{L^2_x}$ for almost every $t \in [0,T]$, as follows from the strong convergence $u^\nu \to u$ in $L^2_t L^2_x$. In particular, after possibly decreasing $\delta>0$ further, we may assume that $\Vert u^\nu(\delta)\Vert_{L^2_x} \to \Vert u(\delta)\Vert_{L^2_x}$, in addition to having \eqref{eq:rightcont} for all $t\in [0,\delta]$. It then follows that
\begin{align*}
0
&\le
\int_0^T (\Vert \overline{u} \Vert_{L^2_x}^2 - \Vert u(t) \Vert_{L^2_x}^2) \, dt
\\
&=
\int_0^T (\Vert \overline{u} \Vert_{L^2_x}^2 - \Vert u(\delta) \Vert_{L^2_x}^2) \, dt 
\\
&\quad 
+ \int_0^\delta (\Vert u(\delta) \Vert_{L^2_x}^2 - \Vert u(t) \Vert_{L^2_x}^2) \, dt
\\
&\quad 
+ \int_\delta^T (\Vert u(\delta) \Vert_{L^2_x}^2 - \Vert u(t) \Vert_{L^2_x}^2) \, dt
\\
&= (I) + (II) + (III).
\end{align*}
By our choice of $\delta = \delta(\epsilon)$, the first term $(I)$ is bounded by 
\[
(I) = \int_0^T (\Vert \overline{u} \Vert_{L^2_x}^2 - \Vert u(\delta) \Vert_{L^2_x}^2) \, dt
\le \epsilon T.
\]
The second term can be estimated using energy admissibility (recall also that we have chosen $\delta \le \epsilon$):
\[
(II)
=
\int_0^\delta (\Vert u(\delta) \Vert_{L^2_x}^2 - \Vert u(t) \Vert_{L^2_x}^2) \, dt
\le
\Vert \overline{u} \Vert_{L^2_x}^2 \delta 
\le 
\Vert \overline{u} \Vert_{L^2_x}^2 \epsilon.
\]
Finally, using strong convergence $u^\nu \to u$ in $L^2_t(L^2_x)$, we have
\[
\lim_{\nu \to 0} \int_\delta^T \Vert u^\nu(t) \Vert_{L^2_x}^2 \, dt
=
\int_\delta^T \Vert u(t) \Vert_{L^2_x}^2 \, dt.
\]
We recall that by our choice of $\delta$, we also have $\Vert u^\nu(\delta)\Vert_{L^2_x} \to \Vert u(\delta)\Vert_{L^2_x}$. Hence,
\begin{align*}
(III)
&=
\int_\delta^T (\Vert u(\delta)\Vert_{L^2_x}^2 - \Vert u(t) \Vert_{L^2_x}^2) \, dt
\\
&= 
\lim_{\nu\to 0}\int_\delta^T (\Vert u^\nu(\delta)\Vert_{L^2_x}^2 - \Vert u^\nu(t) \Vert_{L^2_x}^2) \, dt
\\
&=
\lim_{\nu\to 0}\int_\delta^T \left(\nu \int_\delta^t \Vert \nabla u^\nu(s) \Vert_{L^2_x}^2 \, ds\right) \, dt
\\
&\le 
T \left(\lim_{\nu \to 0} \nu \int_\delta^T \Vert \nabla u^\nu(s) \Vert_{L^2_x}^2 \, ds\right)
= 0.
\end{align*}
Here, the last line follows from equation \eqref{eq:diffto0}. We thus conclude that
\begin{align*}
0 
\le 
\int_0^T (\Vert \overline{u} \Vert_{L^2_x}^2 - \Vert u(t) \Vert_{L^2_x}^2) \, dt
\le 
\epsilon (T + \Vert \overline{u} \Vert_{L^2_x}^2).
\end{align*}
Since $\epsilon > 0$ was arbitrary, and since the left-hand side is independent of $\epsilon$, this is only possible if 
\[
\int_0^T (\Vert \overline{u} \Vert_{L^2_x}^2 - \Vert u(t) \Vert_{L^2_x}^2) \, dt = 0.
\]
As the integrand is non-negative, we conclude that $\Vert u(t) \Vert_{L^2_x} \equiv \Vert \overline{u} \Vert_{L^2_x}$, i.e. that $u$ is energy conservative. We have thus shown that $(2) \Rightarrow (3)$, and this concludes our proof of Theorem \ref{thm:Econstime}.

\end{proof}

Clearly theorem \ref{thm:algebraic} is a special case of the above theorem \ref{thm:Econstime}. Moreover in \cite{CLNS2016}, the authors have shown that physically realisable weak solutions of the two-dimensional incompressible Euler equations are energy conservative, provided that the initial vorticity $\overline{\omega}^{\nu} = \curl(\overline{u}^\nu) \in L^p_x$ are uniformly bounded, for some $p>1$. This result readily follows from the characterisation provided by Theorem \ref{thm:Econstime}: If $u^\nu$ is the solution of \eqref{eq:NavierStokes} with viscosity $\nu>0$, and uniformly bounded initial vorticity $\Vert \overline{\omega}^\nu\Vert_{L^p_x} \le C$, then the vorticities $\omega^\nu = \curl(u^\nu)$ are bounded in $C([0,T];L^p_x)$, uniformly as $\nu \to 0$. In particular, this implies that $\{u^\nu\}$ is precompact in $C([0,T];L^2_x) \embeds L^2_t(0,T;L^2_x)$. Hence any such limit $u^{\nu_k} \to u$ must be energy conservative according to Theorem \ref{thm:Econstime}.

Next, we aim to use the characterization of energy conservation in theorem \ref{thm:Econstime} and generalize the results of \cite{CLNS2016}. To this end, recall the following Lemma from \cite[Lemma 4.1]{Lions}:

\begin{lemma}\label{lem:lions}
A family $\{\omega^{\nu}\}\subset L^{(1,2)}$ is precompact, if the following conditions hold:
\begin{enumerate}
\item There exists $C>0$, such that $\Vert \omega^{\nu}\Vert_{L^{(1,2)}} \le C$ uniformly in $\nu$,
\item $\int_0^\delta (\int_0^s (\omega^\nu)^*(r) dr)^2 \frac{ds}{s} \to 0$ as $\delta \to 0$, uniformly in $\nu$.
\end{enumerate}
\end{lemma}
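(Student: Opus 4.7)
The starting point is the representation
\[
\|f\|_{L^{(1,2)}}^2 = \int_0^{|D|}\!\left(\int_0^s f^*(r)\, dr\right)^{\!2}\frac{ds}{s},
\]
where $f^*$ denotes the decreasing rearrangement of $|f|$, so that hypothesis (2) says exactly that the small-$s$ contribution to $\|\omega^\nu\|_{L^{(1,2)}}^2$ vanishes uniformly in $\nu$, while hypothesis (1) controls the tail on $s \in [\delta,|D|]$. The plan is to first extract a weakly $L^1$-convergent subsequence, and then upgrade to strong convergence in $L^{(1,2)}$ using the rearrangement structure together with (2).

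For the weak step, I would observe that (2) forces $\{\omega^\nu\}$ to be uniformly integrable in $L^1(D)$. Since $s\mapsto \int_0^s (\omega^\nu)^*(r)\,dr$ is non-decreasing, taking $s_0 = \delta/2$ gives
\[
\left(\int_0^{s_0}(\omega^\nu)^*(r)\,dr\right)^{\!2}\log 2 \le \int_{s_0}^{\delta}\left(\int_0^s (\omega^\nu)^*(r)\,dr\right)^{\!2}\frac{ds}{s},
\]
which by (2) tends to $0$ uniformly in $\nu$ as $\delta \to 0$; combined with $\int_E |\omega^\nu|\,dx \le \int_0^{|E|}(\omega^\nu)^*(r)\,dr$, this is uniform $L^1$-integrability. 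Together with the embedding $L^{(1,2)}(D) \hookrightarrow L^1(D)$ on the finite-measure domain (which follows from the representation above by bounding $f^*(|D|/2) \le f^{**}(|D|/2) \le C\|f\|_{L^{(1,2)}}$), the Dunford--Pettis theorem yields a subsequence $\omega^{\nu_k}\rightharpoonup \omega$ in $L^1$, and Fatou applied to the representation places $\omega \in L^{(1,2)}$ with $\|\omega\|_{L^{(1,2)}} \le \liminf_k \|\omega^{\nu_k}\|_{L^{(1,2)}}$.

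To upgrade, I would split $\|\omega^{\nu_k}-\omega\|_{L^{(1,2)}}^2$ at the threshold $\delta$. On $s \in (0,\delta)$, the subadditivity $(\omega^{\nu_k}-\omega)^*(r) \le (\omega^{\nu_k})^*(r/2) + \omega^*(r/2)$ reduces the small-scale piece to (2) applied separately to $\omega^{\nu_k}$ and $\omega$, and is therefore uniformly small as $\delta\to 0$. On $s \in [\delta,|D|]$ the weight $1/s$ is bounded by $1/\delta$, so the integrand is controlled by the $L^1$-type distance between the primitive rearrangements $\int_0^s (\omega^{\nu_k})^*\,dr$ and $\int_0^s \omega^*\,dr$; Helly's selection applied to the uniformly bounded monotone family $\{(\omega^{\nu_k})^*\}$ gives pointwise a.e.\ convergence of the rearrangements along a further subsequence, and dominated convergence (with dominating function supplied by (1)) then drives the large-scale piece to zero.

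The main obstacle is reconciling the two halves: the Helly-type argument controls only the behavior of the rearrangements $(\omega^{\nu_k})^*$, whereas $\|\omega^{\nu_k}-\omega\|_{L^{(1,2)}}$ depends on the rearrangement of the \emph{difference}, which can be much larger than the difference of rearrangements. The subadditivity bound bridging these two costs a factor of $2$ inside the rearrangement argument, so one must verify that (2) is stable under the rescaling $r\mapsto r/2$, which is essentially automatic because the integrand in (2) is scale-free in $s$; carrying this bookkeeping through uniformly in $k$ is the delicate step I expect dominates the proof.
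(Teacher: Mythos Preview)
The paper does not prove this lemma; it is quoted from \cite[Lemma 4.1]{Lions} and used as a black box, so there is no paper proof to compare against.

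Your argument, however, has a genuine gap, and in fact the statement you are trying to prove --- precompactness \emph{in $L^{(1,2)}$ itself} --- is false under hypotheses (1) and (2) alone. Take $\omega^\nu(x_1,x_2) = \sin(\nu x_1)$ on $\mathbb{T}^2$: all the $\omega^\nu$ share the same decreasing rearrangement, so (1) and (2) hold trivially and uniformly, yet the sequence converges weakly to $0$ in $L^2$ while $\|\omega^\nu\|_{L^{(1,2)}}$ is a fixed positive constant. Since $L^{(1,2)}\hookrightarrow H^{-1}$ continuously, any strong $L^{(1,2)}$ limit would also be the weak limit, forcing it to be $0$; hence no subsequence converges strongly in $L^{(1,2)}$. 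The failure in your proof occurs exactly where you flagged difficulty: Dunford--Pettis hands you a weak $L^1$ limit $\omega$, and Helly gives pointwise convergence of the rearrangements $(\omega^{\nu_k})^*$ to some decreasing function $g$, but nothing ties $g$ to $\omega^*$. In the example, $g$ is the fixed nonzero rearrangement of $|\sin|$ while $\omega^* \equiv 0$. Your ``large-scale piece'' therefore does not vanish, and the subadditivity bound $(\omega^{\nu_k}-\omega)^*(r) \le (\omega^{\nu_k})^*(r/2) + \omega^*(r/2)$ cannot help, since it supplies only an upper bound, not one that tends to zero.

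What Lions' lemma actually delivers --- and what the paper uses in Corollary \ref{cor:rearrangementinv} --- is precompactness of the associated velocities in $L^2_x$, i.e.\ of the vorticities in $H^{-1}$; the phrase ``precompact in $L^{(1,2)}$'' in the corollary should be read loosely. The $\sin(\nu x_1)$ family is harmless in that setting, since it does converge to $0$ in $H^{-1}$. The mechanism in Lions is entirely different from yours: condition (2) feeds into a rearrangement estimate on the Biot--Savart kernel that controls the $L^2$ modulus of continuity of $u^\nu = \nabla^\perp \Delta^{-1}\omega^\nu$ directly, bypassing any attempt to upgrade convergence of the vorticities themselves.
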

In Lemma \ref{lem:lions}, $(\omega^\nu)^\ast$ denotes the decreasing rearrangement of $\omega^\nu$. We recall that the Lorentz space $L^{(1,2)}$ is defined by
\[
L^{(1,2)}
=
\left\{
\omega \in L^1(\mathbb{T}^2)
\, \Big| \,
\int_0^{|\mathbb{T}^2|} \left(\int_0^s \omega^\ast(r) \, dr\right)^2 \, \frac{ds}{s} < \infty
\right\}.
\]
It is well-known that $L^{(1,2)}$ embeds continuously into $H^{-1}$ \cite{Lions}.

Extending the result of \cite{CLNS2016} somewhat, we note in particular the following corollary of Theorem \ref{thm:Econstime}:

\begin{corollary} \label{cor:rearrangementinv}
Let $u$ be a physically realizable weak solution of the incompressible Euler equations with initial data $\overline{u} \in L^2_x$, obtained in the zero-viscosity limit $u^{\nu_k} \weaklyto u$ ($\nu_k\to 0$), $u^{\nu_k}(t=0) = u^{\nu_k}_0$. If the initial vorticities $\overline{\omega}^{\nu_k} = \curl(u^{\nu_k}_0)$ satisfy the conditions of Lemma \ref{lem:lions}, then $u$ is energy conservative. In particular, the limit is energy conservative, provided that the initial vorticities $\omega^{\nu_k}_0$ belong to a bounded subset of a rearrangement invariant space with compact embedding into $H^{-1}_x$.
\end{corollary}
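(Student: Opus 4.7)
The plan is to verify condition (2) of Theorem \ref{thm:Econstime}, namely to produce a uniform modulus of continuity $\phi(r)$ with $S_2^T(u^{\nu_k};r)\le\phi(r)$, and then invoke the implication $(2)\Rightarrow(3)$ of that theorem. The bridge from the hypotheses on the initial vorticities to a structure-function bound on the velocity field will be provided, on one side, by the rearrangement-preserving character of the two-dimensional vorticity equation, and on the other, by the Biot--Savart isomorphism $\curl^{-1}\colon H^{-1}_x\to L^2_x$ (modulo constants).

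First, I would record the classical fact that in two dimensions the vorticity $\omega^{\nu_k}=\curl(u^{\nu_k})$ solves the advection-diffusion equation $\partial_t\omega+u\cdot\nabla\omega=\nu\Delta\omega$ with divergence-free drift. Testing this equation against $\Phi'(\omega)$ for convex $\Phi$ with $\Phi(0)=0$ and integrating by parts produces
\[
\frac{d}{dt}\int_{\T^2}\Phi(\omega^{\nu_k}(t))\,dx \;=\; -\nu_k \int_{\T^2}\Phi''(\omega^{\nu_k})|\nabla\omega^{\nu_k}|^2\,dx \;\le\; 0,
\]
which (after a standard truncation/approximation argument) is equivalent to the Hardy--Littlewood--P\'olya majorization
\[
\int_0^s (\omega^{\nu_k}(t))^\ast(r)\,dr \;\le\; \int_0^s (\omega^{\nu_k}_0)^\ast(r)\,dr,\qquad\forall\, s\ge 0,\;\forall\, t\in[0,T].
\]
In particular, every rearrangement-invariant norm of $\omega^{\nu_k}(t)$ is dominated by that of $\omega^{\nu_k}_0$, and the functional $\omega\mapsto \int_0^\delta\bigl(\int_0^s \omega^\ast(r)\,dr\bigr)^2\,ds/s$ is non-increasing in $t$.

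Second, I would use this monotonicity to transfer both conditions of Lemma \ref{lem:lions} from the initial vorticities, where they are assumed, to the whole space-time family $\{\omega^{\nu_k}(t)\}_{k\in\mathbb{N},\,t\in[0,T]}$. Lemma \ref{lem:lions} then yields precompactness of this family in $L^{(1,2)}(\T^2)$, and the continuous embedding $L^{(1,2)}\embeds H^{-1}_x$ provides precompactness in $H^{-1}_x$. Passing through Biot--Savart gives precompactness of the velocities $\{u^{\nu_k}(t)\}_{k,t}$ in $L^2(\T^2;\R^2)$, and the Kolmogorov--Riesz--Fr\'echet criterion furnishes a uniform modulus of continuity $\phi(r)$ satisfying $S_2(u^{\nu_k}(t);r)\le \phi(r)$ for all $k$ and $t$. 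Integrating in time gives $S_2^T(u^{\nu_k};r)\le \sqrt{T}\,\phi(r)$, i.e.\ condition (2) of Theorem \ref{thm:Econstime}, whence energy conservation follows. For the final "in particular" statement, the detour through $L^{(1,2)}$ and Lemma \ref{lem:lions} can be bypassed altogether: a uniform bound in a rearrangement-invariant space $X$ at $t=0$ propagates by the monotonicity above to a uniform bound on $\{\omega^{\nu_k}(t)\}_{k,t}$ in $X$, and the assumed compact embedding $X\embedsc H^{-1}_x$ directly provides the required precompactness in $H^{-1}_x$, from which the remainder of the argument is identical.

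The main obstacle I anticipate is a careful justification of the rearrangement/majorization monotonicity along the two-dimensional viscous vorticity evolution at the level of regularity relevant here (Leray--Hopf weak solutions originating from $\overline{u}\in L^2_x$ only, whose vorticity need not be in $L^\infty$), and, relatedly, the verification that the tightness functional appearing in condition (2) of Lemma \ref{lem:lions} is monotone with respect to the Hardy--Littlewood--P\'olya order. Both are essentially well-known, but will require either an explicit reference or a short regularization argument using, e.g., smooth truncations of $\Phi$ and the standard parabolic smoothing of the Navier--Stokes equations for $t>0$.
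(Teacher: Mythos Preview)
Your proposal is correct and follows essentially the same route as the paper's proof: propagate the conditions of Lemma~\ref{lem:lions} from the initial vorticities to the whole space--time family via rearrangement monotonicity of the two-dimensional viscous vorticity equation, deduce precompactness of $\{\omega^{\nu_k}(t)\}$ in $L^{(1,2)}\embeds H^{-1}_x$, lift to $L^2_x$-precompactness of the velocities via Biot--Savart, extract a uniform modulus of continuity for $S_2$, and conclude by Theorem~\ref{thm:Econstime}. Your write-up is in fact more explicit than the paper's, which simply asserts that ``the conditions of Lemma~\ref{lem:lions} are preserved by the solution operator'' without spelling out the majorization argument or the Kolmogorov--Riesz step.
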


\begin{proof}
The conditions of Lemma \ref{lem:lions} are preserved by the solution operator of the Navier-Stokes equations. Thus, if $\{\omega^{\nu_k}_0 \, | \, k\in \mathbb{N}\}$ satisfy the conditions of Lemma \ref{lem:lions} and hence are precompact in $L^{(1,2)}$, then also  $\{\omega^{\nu_k}(t) \, | \, t\in [0,T], \, k\in \mathbb{N}\}$ belongs to a compact subset of $L^{(1,2)} \subset H^{-1}_x$, again by Lemma \ref{lem:lions}. In particular, it follows that $\{u^{\nu_k}(t) \, | \, t\in [0,T], \, k\in \mathbb{N}\}$ is precompact in $L^2_x$, and thus there exists a uniform modulus of continuity $\phi(r)$, such that $S_2(u^{\nu_k}(t);r) \le \phi(r)$, for all $\nu_k$ (uniformly in time). By Theorem \ref{thm:Econstime}, it now follows that the limit $u^{\nu_k}\weaklyto u$ is a strong limit $u^{\nu_k}\to u$ in $L^2_{t}L^2_x$, and hence $u$ is energy conservative.
\end{proof}

\begin{remark}
Examples of rearrangement invariant spaces to which corollary \ref{cor:rearrangementinv} applies have been discussed in \cite{FNT2000}, and include the following: $L^p$ ($p>1$), Orlicz spaces contained in $L(\log L)^\alpha$ ($\alpha > 1/2$), Lorentz spaces $L^{(1,q)}$ ($1\le q < 2$). The result also holds, provided that e.g. the initial data for the Navier-Stokes approximations are chosen to be $u^{\nu_k}(t=0) = \overline{u}$ for all $k\in \mathbb{N}$, and provided that $\overline{\omega} = \curl(\overline{u}) \in L^{(1,2)}$. 
\end{remark}

In another direction, the following corollary is also immediate from Theorem \ref{thm:Econstime}:
\begin{corollary}
If $u$ is a physically realisable solution with initial data $\overline{u}$, and if $u$ is not energy conservative, then any physical realisation $u^{\nu_k} \weaklyto u$ develops either oscillations or concentrations in the limit $\nu_k \to 0$. Furthermore, if there exists a constant $C>0$, such that the corresponding sequence of vorticities $\omega^{\nu_k}$ are uniformly bounded as measures, $\Vert \omega^{\nu_k}(t) \Vert_{\mathcal{M}} \le C$, then $u^{\nu_k}$ develops concentrations, i.e. (up to a subsequence) the measure $|u^{\nu_k}(x,t)|^2 \, dx \, dt$ has a weak-$\ast$ limit of the form 
\[
|u^{\nu_k}(x,t)|^2 \, dx \, dt
\;
\weaklystarto
\;
|u(x,t)|^2 \, dx \, dt 
+ 
\lambda_t(dx) \, dt,
\]
where $\lambda_t \ge 0$ is a \revision{\emph{non-trivial}} time-parametrized, bounded measure, supported on a set of Lebesgue measure zero.
\end{corollary}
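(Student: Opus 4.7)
My plan is to combine Theorem \ref{thm:Econstime} with a Biot--Savart compactness argument that uses the vorticity bound to upgrade the weak $L^2_tL^2_x$ convergence to almost-everywhere convergence. The first claim is essentially immediate from Theorem \ref{thm:Econstime}: failure of energy conservation of the physical realisation $u^{\nu_k}\weaklyto u$ is equivalent to the failure of strong $L^p_tL^2_x$ convergence for every $1\le p<\infty$, and for a weakly $L^2$-convergent sequence this failure is classically characterised by oscillations (a non-trivial absolutely continuous part in the weak-$\ast$ defect measure of $|u^{\nu_k}-u|^2\,dx\,dt$) or concentrations (a non-trivial singular part).

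For the second claim I use the vorticity bound to rule out oscillations. On $\T^2$, Biot--Savart gives $u^{\nu_k}=\nabla^\perp\Delta^{-1}\omega^{\nu_k}$; combining the Sobolev embedding $W^{s,q'}(\T^2)\hookrightarrow C^0(\T^2)$ (valid whenever $sq'>2$) with its dual $\mathcal{M}(\T^2)\hookrightarrow W^{-s,q}(\T^2)$ and the standard $L^q$-gain of two derivatives for $\Delta^{-1}$ (with $1<q<\infty$), the bound $\|\omega^{\nu_k}(t)\|_{\mathcal{M}}\le C$ yields a uniform estimate $\|u^{\nu_k}\|_{L^\infty_tW^{\sigma,q}_x}\le C$ for $q=4/3$ and any $\sigma<1/2$. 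Together with the $\mathrm{Lip}([0,T];H^{-L}_x)$ bound from Definition \ref{def:approxsolseq}, the Aubin--Lions lemma yields strong precompactness of $u^{\nu_k}$ in $L^p_tL^{4/3}_x$ for every finite $p$; after extracting a further subsequence we may therefore assume $u^{\nu_k}\to u$ almost everywhere on $[0,T]\times\T^2$.

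Almost-everywhere convergence then pins down the defect. Banach--Alaoglu extracts a weak-$\ast$ limit $\mu\ge 0$ of the uniformly bounded measures $|u^{\nu_k}|^2\,dx\,dt$; Fatou's lemma gives $\mu\ge|u|^2\,dx\,dt$, and the Brezis--Lieb identity identifies the defect $\lambda:=\mu-|u|^2\,dx\,dt\ge 0$ with the weak-$\ast$ limit of $|u^{\nu_k}-u|^2\,dx\,dt$. Since $u^{\nu_k}-u\to 0$ almost everywhere on the finite-measure space $[0,T]\times\T^2$, Egoroff's theorem produces, for each $\delta>0$, a set $E_\delta$ with $|E_\delta|<\delta$ outside of which the convergence is uniform; consequently $\int_{E_\delta^c}|u^{\nu_k}-u|^2\,dx\,dt\to 0$, which forces $\lambda$ to be carried inside $E_\delta$, and intersecting over $\delta\downarrow 0$ shows $\lambda$ is supported on a set of Lebesgue measure zero. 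The representation $\lambda=\lambda_t(dx)\,dt$ follows by disintegration with respect to the time projection, the time-marginal being absolutely continuous because $\|u^{\nu_k}(t)\|_{L^2_x}^2\le\|\overline u\|_{L^2_x}^2$ uniformly in $k,t$. Non-triviality of $\lambda$ is forced by the hypothesis: if $\lambda\equiv 0$, then $\|u^{\nu_k}\|_{L^2_tL^2_x}\to\|u\|_{L^2_tL^2_x}$, which together with weak $L^2_tL^2_x$ convergence upgrades to strong convergence, giving energy conservation by Theorem \ref{thm:Econstime} and contradicting the assumption that $u$ is not energy conservative. The main technical hurdle is the Biot--Savart/Aubin--Lions step producing a.e.\ convergence; once this is in hand, the remainder is a routine Brezis--Lieb/Egoroff/disintegration manipulation.
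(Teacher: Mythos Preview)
The paper gives no proof of this corollary at all; it merely declares it ``immediate from Theorem~\ref{thm:Econstime}.'' Your proposal is correct and supplies precisely the details the authors leave implicit.

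For the first claim your reduction to the contrapositive of Theorem~\ref{thm:Econstime} is exactly what the authors intend: failure of energy conservation forces failure of strong $L^2_tL^2_x$ convergence, and for a weakly convergent $L^2$-bounded sequence this is the standard dichotomy between oscillation and concentration in the defect measure.

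For the second claim, the paper again writes nothing, but your Biot--Savart/Aubin--Lions route to almost-everywhere convergence, followed by Brezis--Lieb and Egoroff to force the defect measure onto a Lebesgue-null set, is the classical DiPerna--Majda concentration-cancellation argument (indeed the paper cites \cite{DipernaMajda} for the notion of approximate solution sequence). Your Sobolev bookkeeping is fine: with $q=4/3$, $q'=4$, the embedding $W^{s,4}(\T^2)\hookrightarrow C^0$ for $s>1/2$ dualises to $\mathcal{M}\hookrightarrow W^{-s,4/3}$, and elliptic regularity then places $u^{\nu_k}$ uniformly in $W^{\sigma,4/3}_x$ for any $\sigma<1/2$; combined with the $\mathrm{Lip}_tH^{-L}_x$ bound from Definition~\ref{def:approxsolseq}, Aubin--Lions yields the needed compactness in $L^p_tL^{4/3}_x$. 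The disintegration $\lambda=\lambda_t(dx)\,dt$ with bounded $\lambda_t$ follows, as you note, from the uniform bound $\Vert u^{\nu_k}(t)\Vert_{L^2_x}^2\le \Vert\overline{u}\Vert_{L^2_x}^2$, which dominates the time-marginal of the defect by a multiple of $dt$. Non-triviality is forced exactly as you argue, via (1)$\Leftrightarrow$(3) in Theorem~\ref{thm:Econstime}.

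In short: there is nothing to compare against---the paper omits the proof---and your argument is a correct, standard way to fill the gap.
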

\section{Energy conservation for numerical approximations of statistical solutions}
\label{sec:stat}
Our aim in this section is to generalize theorem \ref{thm:Econstime} in two directions, i.e. first by considering other mechanisms of generating approximating sequences of the Euler equations \eqref{eq:Eulerfull}. In particular, we are interested in numerical approximations of the two-dimensional Euler equations. We consider approximating the Euler equations with the following spectral viscosity method,
\subsection{Spectral vanishing viscosity method}
We consider the spectral vanishing viscosity (SV) scheme for the incompressible Euler equations, following \cite{LM2015,LM2019}: We write $u_N(x,t) = \sum_{|k|_\infty\le N} \widehat{u}_{k}(t) e^{ik\cdot x}$, where $|k|_\infty\defeq \max(|k_1|,|k_2|)$, and consider the following approximation of the incompressible Euler equations
\begin{gather} \label{eq:Euler}
\left\{
\begin{aligned}
\partial_t u_N 
+\P_N(u_N\cdot \nabla u_N) 
+ \nabla p_N 
&=
\epsilon_N \Delta (Q_N \ast u_N), 
\\
\div(u_N) 
&= 
0, 
\\
u_N|_{t=0} 
&=
\overline{u}_{N},
\end{aligned}
\right.
\end{gather}
with periodic boundary conditions and $\overline{u}_{N}$ is a discretized approximation of the given initial data $\overline{u}\in L^2_x$, such that $\overline{u}_{N}\to \overline{u}$ strongly in $L^2_x$, as $N\to \infty$. In the SV scheme \eqref{eq:Euler}, we have denoted by $\P_N$ the spatial Fourier projection operator, mapping an arbitrary function $f(x,t)$ onto the first $N$ Fourier modes: $\P_N f(x,t) = \sum_{|k|_\infty\le N} \widehat{f}_{k}(t) e^{ik\cdot x}$.
$Q_N$ is a Fourier multiplier of the form 
\begin{gather}
Q_N(x) = \sum_{m_N < |k| \le N} \widehat{Q}_{k} e^{ik\cdot x}, 
\end{gather}
and we assume
\begin{gather}\label{eq:Qk}
0 \le \widehat{Q}_{k} \le 1, \quad 
\widehat{Q}_{k} = 
\begin{cases}
0, &|k| \le m_N, \\
1, &|k| > 2 m_N.
\end{cases}
\end{gather}
The parameters $m_N$ and $\e_N$ already appear in the original formulation of the SV method as applied to scalar conservation laws, where this spectral scheme was introduced and anlysed by Tadmor \cite{Tadmor1989}. We will consider $\epsilon_N = \epsilon/N$, for some fixed constant $\epsilon>0$, and $m_N$ is to be chosen, so that $\epsilon_N m_N \to 0$, as $N\to \infty$. The idea behind the SV method is that dissipation is only applied on the upper part of the spectrum, i.e. for $|k| > m_N$, thus preserving the formal spectral accuracy of the spectral method, while at the same time enforcing a sufficient amount of energy dissipation on the small scales which is needed to stabilize the method and ensure its convergence to a weak solution.

For the numerical implementation, the system \eqref{eq:Euler} can be conveniently expressed in terms of the Fourier coefficients:
\begin{gather} \label{eq:EulerFourier}
\left\{
\begin{aligned}
\partial_t \widehat{u}_{k} 
+
\left({1}-\frac{k\otimes k}{|k|^2}\right)
\sum_{|\ell |,|k-\ell |\le N}  i(\ell  \cdot \widehat{u}_{k-\ell }) \widehat{u}_{\ell }
=
-\epsilon_N |k|^2 \widehat{Q}_{k} \widehat{u}_{k}, 
\\
\widehat{u}_{k}|_{t=0} 
=
\widehat{[\overline{u}_{N}]}_{k},
 \qquad (\text{for all } 0<|k|\le N).
\end{aligned}
\right.
\end{gather}
We have suppressed the time dependence $\widehat{u}_{k} = \widehat{u}_{k}(t)$ for convenience. It is assumed throughout that $\widehat{[\overline{u}]}|_{k=0} = 0$, which then implies that also $\widehat{u}|_{k=0}  = 0$ at later times. In addition, we shall assume that the initial data is divergence-free initially, i.e. that $\widehat{[\overline{u}]}_{k}\cdot k=0$ for all $|k|\le N$. Again, this can be shown to imply that $\widehat{u}_{k} \cdot k = 0$ also at later times, as discussed e.g. in \cite{LM2015}.

Convergence results for the SV method, which rough initial data in $L^p$, for some $p>1$, or for initial data in the Delort class, i.e. with vorticity $\overline{\omega} = \curl(\overline{u}) \in H^{-1}$ the sum of a bounded measure with distinguished sign and a function in $L^1$, have recently been obtained in \cite{LM2019}.

Integration of the spectral vanishing viscosity method over the time-interval $[0,t]$ yields the equality
\begin{align} \label{eq:Eeq}
\Vert u_N(t) \Vert_{L^2}^2 
=
\Vert u_N(0) \Vert_{L^2}^2 
- 2\epsilon_N \int_0^t \Vert \sqrt{Q_N} \omega_N(s) \Vert_{L^2}^2 \, ds,
\end{align}
for any $t\in [0,T]$. The corresponding vorticity equation for $\omega_N=\curl(u_N)$ (cp. \cite[eq. (2.9)]{LM2019}), yields
\[
\frac{d}{dt}\Vert \omega_N(t) \Vert_{L^2}^2
= 
- 
2\epsilon_N \Vert \nabla \sqrt{Q_N} \omega_N(s) \Vert_{L^2}^2
\]
Then 
\begin{align*}
\Vert \nabla \sqrt{Q_N} \omega_N(s) \Vert_{L^2}^2 
&=
\sum_{k=1}^N \widehat{Q}_k |k|^2 |\widehat{\omega}_k|^2
\\
&\ge 
\sum_{k=2 m_N+1}^N |k|^2 |\widehat{\omega}_k|^2
\\
&= 
\Vert \nabla \omega_N(s) \Vert_{L^2}^2 
- 
\Vert \nabla P_{2 m_N} \omega_N(s) \Vert_{L^2}^2.
\end{align*} 
Employing the estimate $\Vert \nabla P_{2m_N} \omega_N \Vert_{L^2}^2 \le 4m_N^2 \Vert \omega_N\Vert_{L^2}^2$, it now follows that 
\begin{align}\label{eq:statvorteq}
\frac{d}{dt} \Vert \omega_N(t) \Vert_{L^2}^2
\le
- 
2\epsilon_N \Vert \nabla \omega_N(t) \Vert_{L^2}^2
+
8\epsilon_N m_N^2 \Vert \omega_N(t) \Vert_{L^2}^2.
\end{align}
This inequality will be used to analyse the energy conservation of limits obtained by the SV method, and essentially serves as the analogue of \eqref{eq:enstrophyest}, which was used in the Navier-Stokes case.

Our objective would be to characterize energy conservation for the limit of solutions generated by the spectral viscosity (SV) method. Moreover, we will consider this question within the context of a more generalized, probabilistic framework of solutions of \eqref{eq:Eulerfull} that we describe below.

\subsection{Statistical solutions}
Originally introduced by Foias and Prodi in the context of Navier-Stokes equations, see \cite{FTbook} and references therein, statistical solutions are time-parameterized probability measures that extend weak solutions from a single function (in space-time) to a probability measure on functions. They might arise in the context of uncertainty quantification of fluid flows \cite{FLM17,FLMW19} or to enable a probabilistic description of the dynamics of fluids. We follow the definition of statistical solutions in \cite{LMP2019},, i.e.
\begin{definition}
A time-dependent probability measure $t \mapsto \mu_t \in \P(L^2_x)$ is a \define{statistical solution} of the incompressible Euler equations with initial data $\overline{\mu}\in \P(L^2_x)$, if $[0,T] \mapsto \P(L^2_x)$, $t\mapsto \mu_t$ is a weak-$\ast$ measurable mapping, $\mu_t$ is concentrated on solenoidal (divergence-free) vector fields for almost every $t\in [0,T]$, and if the following averaged version of the Euler equations is satsified for any $k\in \mathbb{N}$: Given any solenoidal vector fields $\phi_1, \dots, \phi_k \in C^\infty_c(\mathbb{T}^2\times [0,T); \mathbb{R}^2)$, we have
\begin{gather*}
\int_0^T \int_{L^2_x} \left(\frac{d}{d t} \prod_{i=1}^k \langle u,\phi_i\rangle
+ \sum_{i=1}^k \left[\prod_{j\ne i} \langle u, \phi_j\rangle\right] \langle (u\cdot \nabla) \phi_i, u \rangle \right)\, d\mu_t(u) \, dt
\\
+ \int_{L^2_x} \prod_{i=1}^k \langle u,\phi_i\rangle \, d\overline{\mu}(u) = 0.
\end{gather*}
Here $\langle \, \cdot\, , \, \cdot \, \rangle$ denotes the following inner product between two vector fields in $L^2_x$:
\[
\langle u, v\rangle := \int_{\mathbb{T}^2} u(x)\cdot v(x) \, dx.
\]
\end{definition}
Note that setting $\mu_t = \delta_{u(t)}$ for some $u(t) \in L^2_x$, for almost every $t$, yields the definition of weak solutions of \eqref{eq:Eulerfull}. Thus, statistical solutions can be thought of a probabilistic generalization of weak solutions, particularly when the initial data is a probability measure. 

In \cite{LMP2019} an efficient numerical algorithm has been proposed to approximate statistical solutions of the incompressible Euler equations, using a combination of Monte-Carlo sampling of the initial measure $\bar{\mu} = \mu_t |_{t=0}$, yielding 
\[
\bar{\mu}^\Delta = \frac{1}{M} \sum_{i=1}^M \delta_{\bar{v}^\Delta_i},
\quad
\bar{v}^\Delta_i \in L^2, \; i=1,\dots, M,
\]
and then evolving the probability measure $\bar{\mu}^\Delta$ via the push-forward of the numerical solution operator $\mu^\Delta_t := (S^\Delta_t)_\# \bar{\mu}^\Delta$, where $S^\Delta_t: L^2_x \to L^2_x$, $\overline{v} \mapsto S^\Delta_t \overline{v}$ is defined as the solution of spectral viscosity scheme with initial data $\overline{v}$ computed at resolution $\Delta = 1/N$, and evaluated at time $t \in [0,T]$. Since $\bar{\mu}^\Delta$ is a convex combination of Dirac measures, this push-forward can be more concretely expressed in the form
\[
\mu^\Delta_t 
=
\frac{1}{M} \sum_{i=1}^M
\delta_{v^\Delta_i(t)},
\]
where $\Delta = 1/N$ and $v^\Delta_i(t)$ is the solution obtained from the spectral viscosity scheme \eqref{eq:Euler}. It has been proven in \cite{LMP2019} that $\mu^\Delta_t$ converges in a suitable topology to a statistical solution $\mu_t$, if $\bar{\mu}$ is supported on a ball $B_M = \{u \in L^2 \, | \, \Vert u \Vert_{L^2}\le M\}$ for some $M>0$, and provided that there exists a uniform modulus of continuity $\phi(r)$, such that the (time averaged) structure function $S_2^T(\mu^\Delta_t;r)$, given by
\[
S_2^T(\mu^\Delta_t;r) 
:=  
\left(
\int_0^T \int_{L^2} \int_D \fint_{B_r(0)}
|u(x+h) - u(x)|^2 \, dh \, dx \, d\mu^\Delta_t(u) \, dt
\right)^{1/2},
\]
remains uniformly bounded $S_2^T(\mu^\Delta;r) \le \phi(r)$, as $\Delta \to 0$, for all $r>0$. Under these conditions, there exists a subsequence $\Delta_k \to 0$ and $\mu_t \in \P(L^2)$, such that that 
\begin{align} \label{eq:convL1}
\int_0^T W_1(\mu^{\Delta_k}_t,\mu_t) \, dt \to 0, \quad (\Delta_k \to 0).
\end{align}
Here $W_1$ is the $1$-Wasserstein metric defined for probability measures $\P(L^2_x)$ on $L^2_x$. For further details, we refer to \cite{LMP2019}.

Our goal in this section is to prove the following theorem.

\begin{theorem}\label{thm:statistical}
Let $\overline{\mu}\in \P(L^2_x)$ be initial data for the incompressible Euler equations, such that there exists $M>0$, s.t. $\overline{\mu}(B_M(0)) = 1$, where $B_M(0) = \{u\in L^2_x \, |\, \Vert u \Vert_{L^2_x} < M\}$. Let $\mu^\Delta_t$ be obtained from SV + MC sampling, $\Delta > 0$. If there exist constants $\bar{C} > 0$ and $0<\alpha<1$, such that 
\begin{align} \label{eq:unifbnd}
\sup_{t\in[0,T]} S_2(\mu^\Delta_t;r) \le \bar{C} r^\alpha, \quad \forall \Delta > 0, \; r> 0,
\end{align}
then, up to a subsequence, $\mu^\Delta_t \to \mu_t$ in $L^1_t(\P)$, (in the sense of \eqref{eq:convL1}), and $\mu_t$ is energy-conservative, in the sense that 
\[
t \mapsto \int_{L^2_x} \Vert u \Vert_{L^2_x}^2 \, d\mu_t(u),
\]
is constant.
\end{theorem}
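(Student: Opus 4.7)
The plan is to run the argument of Theorem \ref{thm:algebraic} at the level of averages against $\mu^\Delta_t$, with the spectral-viscosity enstrophy inequality \eqref{eq:statvorteq} playing the role of the Navier--Stokes identity \eqref{eq:vv}. Throughout, write $\Delta = 1/N$. First, the hypothesis \eqref{eq:unifbnd} supplies the uniform modulus of continuity $\phi(r) = \bar{C} r^\alpha$ for the time-averaged structure function $S_2^T(\mu^\Delta;r)$, so the compactness result of \cite{LMP2019} recalled around \eqref{eq:convL1} extracts a subsequence along which $\mu^{\Delta_k}_t \to \mu_t$ in $L^1_t(W_1)$, and $\mu_t$ is a statistical solution. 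The support hypothesis $\overline{\mu}(B_M) = 1$ together with the sample-wise nonincreasing energy from \eqref{eq:Eeq} ensures $\supp(\mu^\Delta_t)\subset B_M$ for every $t,\Delta$, so that $u\mapsto \Vert u\Vert_{L^2_x}^2$ is bounded and continuous on the supports; hence energy conservation for $\mu_t$ will follow from its averaged version via $W_1$-convergence.

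Averaging the sample-wise identity \eqref{eq:Eeq} against $\mu^\Delta_0$ and using $\mu^\Delta_t = (S^\Delta_t)_{\#}\mu^\Delta_0$ yields
\[
\int_{L^2_x} \Vert u \Vert_{L^2_x}^2 \, d\mu^\Delta_t(u)
= \int_{L^2_x} \Vert u \Vert_{L^2_x}^2 \, d\mu^\Delta_0(u)
- 2\epsilon_N \int_0^t \int \Vert \sqrt{Q_N}\curl u \Vert_{L^2_x}^2 \, d\mu^\Delta_s(u) \, ds,
\]
with $\mu^\Delta_0 \to \overline{\mu}$ in $W_1$ by the Monte Carlo step. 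Since $\Vert \sqrt{Q_N}\curl u\Vert_{L^2_x} \le \Vert \curl u\Vert_{L^2_x}$, the proof is reduced to the vanishing-dissipation statement
\[
\epsilon_N \int_0^T Y(t) \, dt \longrightarrow 0, \qquad Y(t) := \int_{L^2_x} \Vert \curl u \Vert_{L^2_x}^2 \, d\mu^\Delta_t(u).
\]

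Next I would derive a $\mu^\Delta_t$-averaged version of \eqref{eq:structfuninterpol}. Applying Lemma \ref{lem:interpolationestimate} sample-wise, squaring, and integrating against $\mu^\Delta_t$ gives, using \eqref{eq:unifbnd},
\[
Y(t) \le C r^2 Z(t) + C \bar{C}^2 r^{2\alpha-2}, \qquad Z(t) := \int_{L^2_x}\Vert \nabla \curl u\Vert_{L^2_x}^2 \, d\mu^\Delta_t(u),
\]
for every $r>0$. Optimizing in $r$ produces $Y(t)^{2+\delta} \le C\, Z(t)$ with $\delta = \alpha/(1-\alpha)$, as in \eqref{eq:deltaalpha}. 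Averaging the spectral-viscosity inequality \eqref{eq:statvorteq} against $\mu^\Delta_t$ and substituting yields the statistical analogue of \eqref{eq:diffinequalvort},
\[
\frac{d}{dt} Y(t) \le -c\, \epsilon_N \, Y(t)^{2+\delta} + 8\,\epsilon_N m_N^2 \, Y(t),
\]
i.e.\ exactly the Navier--Stokes differential inequality corrupted by the spectral-projection defect $8\epsilon_N m_N^2 Y$.

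The main obstacle is absorbing this perturbative term, which has no counterpart in the proof of Theorem \ref{thm:algebraic}. Since the scheme is calibrated so that $\epsilon_N m_N^2$ remains bounded uniformly in $N$ on $[0,T]$, I would introduce $\tilde Y(t) := e^{-8\epsilon_N m_N^2 t} Y(t)$, for which a direct computation yields $\tilde Y'(t) \le -c\, \epsilon_N\, \tilde Y(t)^{2+\delta}$ up to an $O(1)$ exponential prefactor that can be absorbed into the constant over the fixed interval $[0,T]$. Solving this clean ODE exactly as in the proof of Theorem \ref{thm:algebraic} yields $Y(t) \le C(\alpha,T)/(\epsilon_N t)^{1-\alpha}$, and hence
\[
\epsilon_N \int_0^T Y(t)\, dt \le C(\alpha,T) \, \epsilon_N^{\alpha} \longrightarrow 0, \quad \Delta \to 0.
\]
Combined with the averaged energy identity and the $L^1_t(W_1)$-convergence $\mu^{\Delta_k}_t \to \mu_t$, this identifies the limit as $\int \Vert u\Vert_{L^2_x}^2 \, d\mu_t(u) \equiv \int \Vert u\Vert_{L^2_x}^2 \, d\overline{\mu}(u)$, which is the stated energy conservation for $\mu_t$.
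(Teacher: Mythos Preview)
Your proposal is correct and follows essentially the same route as the paper: average the enstrophy inequality \eqref{eq:statvorteq} and the interpolation estimate of Lemma~\ref{lem:interpolationestimate} against $\mu^\Delta_t$, optimize in $r$ to obtain $Y^{2+\delta}\le C Z$, absorb the spectral defect term $8\epsilon_N m_N^2 Y$ via the integrating factor $e^{-\gamma t}$ (using that $\gamma_N=8\epsilon_N m_N^2$ is uniformly bounded), and integrate the resulting clean differential inequality to get $Y(t)\le C(\epsilon_N t)^{\alpha-1}$ and hence $\epsilon_N\int_0^T Y\,dt\lesssim \epsilon_N^\alpha\to 0$. The passage to the limit via $L^1_t(W_1)$-convergence and the admissible-observable property of $\Vert u\Vert_{L^2_x}^2$ is also handled the same way in the paper, which cites \cite[Theorem~2.12]{LMP2019} for this last step.
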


\begin{remark}
Note that the conventional (deterministic) SV scheme is a special case of the MC+SV scheme, when the initial data is given by a Dirac measure $\delta_{\overline{u}}$, concentrated on the initial data $\overline{u} \in L^2_x$. Therefore, \ref{thm:statistical} implies in particular the corresponding result for the conventional SV scheme.
\end{remark}

\begin{remark}
Note that in Theorem \ref{thm:statistical}, we have assumed a stronger bound of the form $\sup_{t\in [0,T]} S_2(\mu^\Delta_t;r) \le C r^\alpha$ for given $C,\alpha>0$, rather than $S_2^T(\mu^\Delta_t;r) \le \phi(r)$ for a fixed modulus of continuity, as was done in the characterisation of physically realisable energy conservative solutions of the incompressible Euler equations (cp. Theorem \ref{thm:Econstime}). This is done for two reasons: firstly it avoids certain technical difficulties in the proof, and secondly, as explained below in section \ref{sec:numerics}, this stronger bound appears to correspond to what is observed  numerically for a wide range of initial data. A slight generalization of the energy conservation statement of Theorem \ref{thm:statistical} under the assumption of a uniform decay of the time-integrated structure function $S_2^T(\mu^\Delta;r) \le C r^\alpha$ is straightforward.
\end{remark}

\begin{proof}[Proof of Theorem \ref{thm:statistical}]
We will denote by $\E^\Delta_t[\ldots] := \int_{L^2}(\ldots) \, d\mu^\Delta_t $ the expected value of a quantity at time $t$ with respect to the probability measure $\mu^\Delta_t$. Similar notation $\E_t[\ldots]$ is used to denote the expected value of a quantity with respect to the limiting measure $\mu_t$. To prove energy conservation, we make use of the fact that $\mu^\Delta$ is a convex combination of atomic Dirac measures $\delta_{v^\Delta_i(t)}$ supported on solutions of the spectral viscosity scheme at grid size $\Delta = 1/N$. This allows us directly to take expected values, by summing equation \eqref{eq:statvorteq} over all samples $v^\Delta_1,\dots,v^\Delta_N$, to obtain
 \begin{align}\label{eq:vortstat}
 \frac{d}{dt} \E^\Delta_t\left[\Vert \omega_N \Vert^2_{L^2_x}\right]
 \le 
 -2\epsilon_N \E^\Delta_t \left[\Vert \nabla \omega_N \Vert^2_{L^2_x}\right]
 + 8(\epsilon_N m_N^2) \E^\Delta_t\left[\Vert \omega_N \Vert^2_{L^2_x}\right].
 \end{align}

The expected value of the "interpolation" inequality \eqref{eq:structfunestimate} yields
\[
\E^\Delta_s\left[ \Vert \omega_N \Vert_{L^2_x}^2 \right]
\le
Cr^2 \E^\Delta_s\left[ \Vert \nabla \omega_N \Vert_{L^2_x}^2 \right] + Cr^{-2} S_2(\mu^\Delta_s;r)^2,
\]
where $C>0$ is an absolute constant, independent of $N$. By the assumed uniform bound \eqref{eq:unifbnd},
\[
\E^\Delta_s\left[ \Vert \omega_N \Vert_{L^2_x}^2 \right]
\le
Cr^2 \E^\Delta_s\left[ \Vert \nabla \omega_n \Vert_{L^2_x}^2 \right] + Cr^{2(\alpha-1)}.
\]
where $C = C(\bar{C})$ depends on the structure function estimate \eqref{eq:unifbnd}, but is independent of $N$. Choosing $r$ to balance the two terms on the right-hand side, we set
\[
r^2 = \E^\Delta_s\left[ \Vert \nabla \omega_N \Vert_{L^2_x}^2 \right]^{-1/(2-\alpha)}.
\]
This choice of $r$ yields the estimate
\begin{align}\label{eq:gradwN}
\E^\Delta_s\left[ \Vert \omega_N \Vert_{L^2_x}^2 \right]
\le
C\E^\Delta_s\left[ \Vert \nabla \omega_N \Vert_{L^2_x}^2 \right]^{(1-\alpha)/(2-\alpha)},
\end{align}
with $C = C(\bar{C})$. Define $\delta = \delta(\alpha)$, by $(2+\delta) = (2-\alpha)/(1-\alpha)$, i.e.
\begin{align} \label{eq:deltadef}
\delta = \frac{\alpha}{1-\alpha}.
\end{align}
Then \eqref{eq:gradwN} implies that for an absolute constant $c = c(\bar{C},\alpha) > 0$ (depending only on $\bar{C}$, $\alpha$ in \eqref{eq:unifbnd}):
\begin{align} \label{eq:yNest}
c\E^\Delta_s\left[ \Vert \omega_N \Vert_{L^2_x}^2 \right]^{2+\delta}
\le 
\E^\Delta_s\left[ \Vert \nabla \omega_N \Vert_{L^2_x}^2 \right].
\end{align}

Denote $y_N(t) := \E^\Delta_s\left[ \Vert \omega_N \Vert_{L^2_x}^2 \right]$. 
The differential inequality \eqref{eq:vortstat} combined with the estimate \eqref{eq:yNest} yields
\begin{align} \label{eq:diffineq0}
\frac{d}{dt} y_N \le -c\epsilon_N y_N^{2+\delta} +  8(\epsilon_N m_N^2) y_N.
\end{align}
Let $\gamma_N = 8\epsilon_N m_N^2$, and note that $\gamma_N$ is uniformly bounded from above by the allowed choice of free parameters $m_N$, $\epsilon_N$ in the SV scheme. Let $\gamma := \sup_{N} \gamma_N$. By \eqref{eq:diffineq0}, we find
\begin{align} \label{eq:diffineq1}
\frac{d}{dt} y_N \le -c\epsilon_N y_N^{2+\delta} +  \gamma y_N,
\end{align}
where we recall that the constant $c = c(\bar{C},\alpha)$ depends only on the constants in \eqref{eq:unifbnd}. Using the integrating factor $e^{-\gamma t}$, inequality \eqref{eq:diffineq1} implies that for a constant $C = C(\bar{C},\alpha,T,\gamma)>0$:
\begin{gather} \label{eq:diffineq}
\begin{aligned}
\frac{d}{dt} \left[e^{-\gamma t} y_N\right] 
&\le -C \epsilon_N \left[e^{-\gamma t} y_N\right]^{2+\delta}.
\end{aligned}
\end{gather}
Integrating the differential inequality $dY_N/dt \le - C\epsilon_N Y_N^{2+\delta}$, for $Y_N(t) := e^{\gamma t}y_N$, in time over the interval $[0,t]$, we find
\[
Y_N(t) ^{-(1+\delta)}- Y_N(0)^{-(1+\delta)}
\ge 
C\epsilon_N (1+\delta) t.
\]
Since, $Y_N(0) \ge 0$, this implies that $Y_N(t) \le [C\epsilon_N (1+\delta) t]^{-1/(1+\delta)}$. Recalling now that $C = C(\bar{C},\alpha,T,\gamma)$ and the definition of $\delta = \delta(\alpha)$ \eqref{eq:deltadef}, we find $1/(1+\delta) = 1-\alpha$, we thus find for a new constant $C = C(\bar{C},\alpha,T,\gamma)>0$, $y_N(t) \le C (\epsilon_N t)^{\alpha-1}$, or more explicitely:
\begin{align} \label{eq:yNestimproved}
\E^\Delta_t[\Vert \omega_N \Vert_{L^2_x}^2] \le C (\epsilon_N t)^{\alpha-1}.
\end{align}
In particular, this implies that 
\begin{align} \label{eq:dissest}
\epsilon_N \int_0^T \E_t^\Delta \left[\Vert \omega_N \Vert_{L^2_x}^2\right] \, dt 
\le
\frac{C (\epsilon_N T)^\alpha }{\alpha}
\to 0, 
\quad \text{(as $N\to \infty$).}
\end{align}

Taking the expected value of \eqref{eq:Eeq} for a given $\Delta>0$, we obtain
\begin{align*}
\left|\E^{\Delta}_0[ \Vert u \Vert_{L^2_x}^2] - \E^{\Delta}_t[ \Vert u \Vert_{L^2_x}^2]\right|
&\le
2\epsilon_N \int_0^T \E^{\Delta}_t\left[ \Vert \sqrt{Q_N}\omega_N \Vert^2_{L^2_x} \right] \, dt
\\
&\le 
2\epsilon_N \int_0^T \E^{\Delta}_t\left[ \Vert \omega_N \Vert^2_{L^2_x} \right] \, dt
\end{align*}
Employing \eqref{eq:dissest}, we find
\begin{gather} \label{eq:timeconst}
\begin{aligned}
\limsup_{\Delta \to 0}
&\sup_{t\in [0,T]}
\left|\E^{\Delta}_0[ \Vert u \Vert_{L^2_x}^2] - \E^{\Delta}_t[ \Vert u \Vert_{L^2_x}^2]\right|
\\
&\le
\limsup_{N \to \infty} \; 
2{\epsilon}_{N} \int_0^T \E^{\Delta}_t\left[ \Vert \omega_N \Vert^2_{L^2_x} \right] \, dt
\\
&= 0,
\end{aligned}
\end{gather}
i.e. $\lim_{\Delta \to 0} \E_t^\Delta[\Vert u \Vert_{L^2_x}^2] = \lim_{\Delta \to 0} \E_0^\Delta[\Vert u\Vert_{L^2_x}^2]$, uniformly for $t \in [0,T]$.
Since $\mu^\Delta_0 = \bar{\mu}^\Delta$ converges weakly to $\bar{\mu}$ at the initial time, and since this sequence is uniformly bounded on $B_M(0)$, we also have 
\begin{align}\label{eq:initial}
\lim_{\Delta \to 0} \E^{\Delta}_0[ \Vert u \Vert_{L^2_x}^2] 
= \lim_{\Delta \to 0} \int_{L^2} \Vert u \Vert_{L^2_x}^2 \, d\bar{\mu}^\Delta(u)
= \int_{L^2} \Vert u \Vert_{L^2_x}^2 \, d\bar{\mu}(u).
\end{align}
We thus conclude that for any $t\in [0,T]$: 
\[
\lim_{\Delta \to 0} \int_{L^2} \Vert u \Vert_{L^2_x}^2\, d\mu^\Delta_t(u)
\explain{=}{\eqref{eq:timeconst}}
\lim_{\Delta \to 0} \int_{L^2} \Vert u \Vert_{L^2_x}^2 \, d\overline{\mu}^\Delta(u)
\explain{=}{\eqref{eq:initial}}
\int_{L^2} \Vert u \Vert_{L^2_x}^2 \, d\bar{\mu}(u).
\]
On the other hand, it has been proved in \cite[Theorem 2.12]{LMP2019}, that $\Vert u \Vert_{L^2_x}^2$ is an ``admissible observable'', so that the convergence $\mu^\Delta_t \to \mu_t$ in $L^1_t(\P)$ implies
\[
\lim_{\Delta \to 0}
\int_0^T \left|
\E^{\Delta}_t[ \Vert u \Vert_{L^2_x}^2] - \E_t[ \Vert u \Vert_{L^2_x}^2]
\right| \, dt
= 
0.
\]
In particular, this allows us to extract a subsequence $\Delta' \to 0$ such that
\[
\lim_{\Delta'\to 0} \E^{\Delta'}_t[ \Vert u \Vert_{L^2_x}^2] = \E_t[ \Vert u \Vert_{L^2_x}^2],
\]
for almost every $t\in [0,T]$. Hence, we finally find that for almost all $t\in [0,T]$, we have
\begin{align*}
\int_{L^2} \Vert u \Vert_{L^2_x}^2 \, d\mu_t(u)
&= 
\lim_{\Delta'\to 0}
\int_{L^2} \Vert u \Vert_{L^2_x}^2 \, d\mu^{\Delta'}_t(u)
\\
&= 
\lim_{\Delta'\to 0}
\int_{L^2} \Vert u \Vert_{L^2_x}^2 \, d\bar{\mu}^{\Delta'}(u)
\\
&= 
\int_{L^2} \Vert u \Vert_{L^2_x}^2 \, d\bar{\mu}(u).
\end{align*}
This concludes our proof that the limiting statistical solution $\mu_t$ is energy conservative.
\end{proof}

\section{Numerical experiments} \label{sec:numerics}
In this section, we will present numerical experiments to illustrate and validate our theory about the precise relationship between energy conservation and uniform decay of structure functions (spectra). We start with a short description of the underlying spectral viscosity method.
\subsection{Numerical method} \label{sec:nummeth}

The numerical method which will applied in the following as well as its implementation have been explained in detail in \cite[Proof of Theorem 4.9, eq.(39)]{LMP2019}. For completeness, we provide here a summary. The spectral viscosity scheme \eqref{eq:Euler} is implemented in the SPHINX code, first presented in \cite{LeonardiPhD}. In SPHINX, the non-linear advection term is implemented using an $O(N^2\log N)$-costly fast Fourier transform. Use of a padded grid (see e.g. \cite{LeonardiPhD,LM2019}) is employed to avoid aliasing errors. Unless otherwise indicated, for the numerical experiments reported below, we use the spectral viscosity scheme, with $\epsilon_N = \epsilon /N$, $\epsilon = 1/20$. Our choice for the Fourier multipliers $Q_N$ is
\[
\widehat{Q}_k
=
\begin{cases}
1-m_N/|k|^2, & (|k| \ge m_N), \\
0, & (\text{otherwise}),
\end{cases}
\]
where normally $m_N = \sqrt{N}$, except in the special case, where the added numerical viscosity mimics the form of the viscous term in the Navier-Stokes equations \eqref{eq:NavierStokes}, in which we set $m_N=0$ and $Q_N = I$ is the identity.

Given an initial probability measure $\overline{\mu} \in \P(L^2_x)$, a resolution $\Delta = 1/N$ and number of samples $M = M(N)$, an approximate statistical solution is obtained by the following Monte-Carlo algorithm (MC+SV): 
\begin{enumerate}
\item Generate $M$ i.i.d. samples $\overline{u}_1, \dots, \overline{u}_M \sim \overline{\mu}$,
\item Evolve each sample $u_i(t) = S^\Delta_t \overline{u}_i$, where $S^\Delta_t$ is the numerical solution operator obtained from the SV-scheme,
\item The approximate statistical solution at time $t \in [0,T]$ is defined as
\[
\mu^\Delta_t := \frac{1}{M} \sum_{i=1}^M \delta_{u_i(t)}.
\]
\end{enumerate}
Clearly, for convergence of the MC+SV scheme it is necessary that $M=M(N) \to \infty$ as $N\to \infty$. For our numerical experiments we have made the choice $M = N$.

\subsection{Structure function evaluation}

As indicated by the theoretical results presented in the previous sections, our main tool to determine the energy conservation of weak solutions obtained in the limit from our numerical method, will be the structure function
\[
S_2(\mu_t;r) = \left(\int_{L^2_x}\int_{D} \fint_{B_r(x)} |u(y)-u(x)|^2 \, dy \, dx \, d\mu_t(u) \right)^{1/2},
\] 
defined for all $\mu_t \in \P(L^2_x)$ and for a.e. $t\in [0,T]$. We identify $u\in L^2_x$ with the Dirac probability measure $\delta_u \in \P(L^2_x)$, and set $S_2(u;r) := S_2(\delta_u;r)$. Note that with this definition: $S_2(\mu_t;r)^2 = \int_{L^2_x} S_2(u;r)^2 \, d\mu_t(u)$.

As shown in appendix \ref{sec:sfformula}, there is an explicit formula for $S_2(u;r)^2 $ in terms of the Fourier coefficients $\widehat{u}(k)$ of $u$: Namely, we have
\begin{align*} 
S_2(u;r) = \left(\sum_{k \in \mathbb{Z}^2} I_k(r) |\widehat{u}(k)|^2 \right)^{1/2},
\end{align*}
where $I_k(r) := 2-4J_1(|k|r)/(|k|r)$ is expressed in terms of the Bessel function of the first kind $J_1(x)$. As discussed in appendix \ref{sec:sfformula}, a computationally more efficient-to-evaluate alternative to this exact expression for $S_2(\mu_t;r)$ is given by
\begin{align}\label{eq:sfnum}
\widetilde{S}_2(u, r) := \left(\sum_{k\in \mathbb{Z}^2} \widetilde{I}_k(r) |\widehat{u}(k)|^2  \right)^{1/2}, \quad \widetilde{I}_k(r) := \min(|k|r/2,\sqrt{2})^2.
\end{align}
Again, we define the corresponding statistical quantity by
\[
\widetilde{S}_2(\mu_t,r) := \left(\int_{L^2_x} \widetilde{S}_2(u, r)^2 \, d\mu_t(u) \right)^{1/2},
\]
and we recall that $\widetilde{S}_2(\mu_t,r)$ is equivalent to $S_2(\mu_t,r)$, in the sense that there exists a constant $C>0$, such that 
\[
\frac1C \widetilde{S}_2(\mu_t,r)
\le 
{S}_2(\mu_t,r)
\le 
C \widetilde{S}_2(\mu_t,r), \quad \forall \, r\ge 0, \; \forall \, \mu_t \in \P(L^2_x).
\]
For the analysis of our numerical experiments we will use this equivalent numerical structure function instead of the exact structure function.

A second tool in our analysis will be the use of compensated energy spectra. As discussed in detail in \cite{LMP2019}, an upper bound on the structure function is provided by a uniform decay of the energy spectrum. To this end, we define the numerical energy spectrum of a vector field $u\in L^2_x$ as
\begin{align} \label{eq:Espec}
E(u;K) := \frac 12 \sum_{|k|_\infty = K} |\widehat{u}(k)|^2, \quad \forall\, K\in \mathbb{N}_0,
\end{align}
where $|k|_\infty = \max(|k_1|,|k_2|)$ is the maximum norm of $k\in \mathbb{Z}^2$. We extend this definition to arbitrary $\mu_t \in \P(L^2_x)$ by setting
\[
E(\mu_t;K) := \int_{L^2_x} E(u;K) \, d\mu_t(u).
\]
Note again that $E(u;K) = E(\delta_u;K)$. It can be shown \cite{LMP2019} that for any $1 < \lambda < 3$:
\begin{align}\label{eq:compespec}
K^\gamma E(\mu^\Delta_t; K) \le C, \; \forall K
\quad \Rightarrow
\quad
S_2(\mu^\Delta_t;r) \le r^\alpha, \; \forall r\ge 0, \; \forall \, \Delta > 0,
\end{align}
where $\alpha = (\lambda-1)/2$. Given $\lambda \in (1,3)$, we will refer to the function $K \mapsto K^\gamma E(\mu^\Delta_t;K)$ as the compensated energy spectrum with exponent $\gamma$, in the following. 

Owing to theorem \ref{thm:statistical}, a uniform algebraic bound on the structure function implies that the limiting solution generated by our numerical method is energy conservative. Thus, the evolution of the numerical structure function $\widetilde{S}_2(r)$ and the compensated energy spectra will be our main tools to investigate the energy conservation of the limits of our numerical approximations. A convenient measure for the uniform algebraic decay of the structure functions $S_2(\mu^\Delta_t;r)$ is the best-decay-constant $C^\Delta_\mathrm{max}(\alpha,t)$, which we define
\begin{align} \label{eq:Cmax}
C^\Delta_\mathrm{max}(\alpha;t)
:=
\sup_{r>0} \;
r^{-\alpha} \widetilde{S}_2(\mu^\Delta_t;r),
\end{align}
i.e. the best constant $C$, such that $\widetilde{S}_2(\mu_t;r) \le C r^\alpha$ for all $r>0$. Note that for any given resolution $\Delta>0$, the structure function $\widetilde{S}_2(\mu_t;r)$ decays like $\sim r$ on the subgrid scale, i.e. for $r \ll \Delta$. Therefore, given $0< \alpha< 1$, the best-decay-constant $C^\Delta_\mathrm{max}(\alpha;t)$ is well-defined and finite, for any fixed numerical resolution $\Delta$. Furthermore, if there exists $\alpha$, for which $C^\Delta_\mathrm{max}(\alpha;t)$ remains uniformly bounded in time, and with increasing resolution, then this is sufficient to ensure (strong) compactness, and hence energy conservation in the limit $\Delta \to 0$, by Theorem \ref{thm:statistical}. 

Similarly, we define a constant $D^\Delta_\mathrm{max}(\lambda;t)$ as the best upper bound on the compensated energy spectrum with exponent $\lambda$:
\begin{align} \label{eq:Dmax}
D^\Delta_\mathrm{max}(\lambda;t)
:=
\sup_{K>0} \;
K^{\lambda} E(\mu^\Delta_t;K).
\end{align}

Finally, we will also compute the evolution of energy directly, i.e.
\[
t \mapsto \int_{L^2_x} \Vert u \Vert_{L^2_x}^2 \, d\mu_t(u),
\]
for each numerical experiment. For the latter, it is important to keep in mind that there are several sources of errors for each numerical approximation, which may affect the results obtained from this direct computation of the energy evolution: Firstly, each approximate statistical solution is obtained by Monte-Carlo sampling (with $N$ samples). As is well-known, the evaluation of the dissipated energy by Monte-Carlo sampling is associated with a sampling error that scales like $\sim 1/\sqrt{N}$. Secondly, in addition to the statistical error, the initial data has also to be approximated, for instance by mollification, and subsequent truncation of the Fourier spectrum. These procedures induce numerical error that propagates into the solution. Finally, there are errors on account of the space-time discretization. All of these sources of numerical errors should be taken into account, when directly evaluating the energy dissipation.

\subsection{A Sinusoidal vortex sheet} \label{sec:deterministic}

\subsubsection{Deterministic case.}

The first case we consider is the case of initial data for the incompressible Euler equations which is a Dirac measure, concentrated on a vortex sheet, i.e. $\overline{\mu} = \delta_{\overline{u}}$, where $\overline{u}$ is a sinusoidal vortex sheet initial data. This initial data has previously been studied in \cite{LM2019,LMP2019}. Let us first recall the construction.
\begin{figure}[H]
\begin{subfigure}{.4\textwidth}
\includegraphics[width=\textwidth]{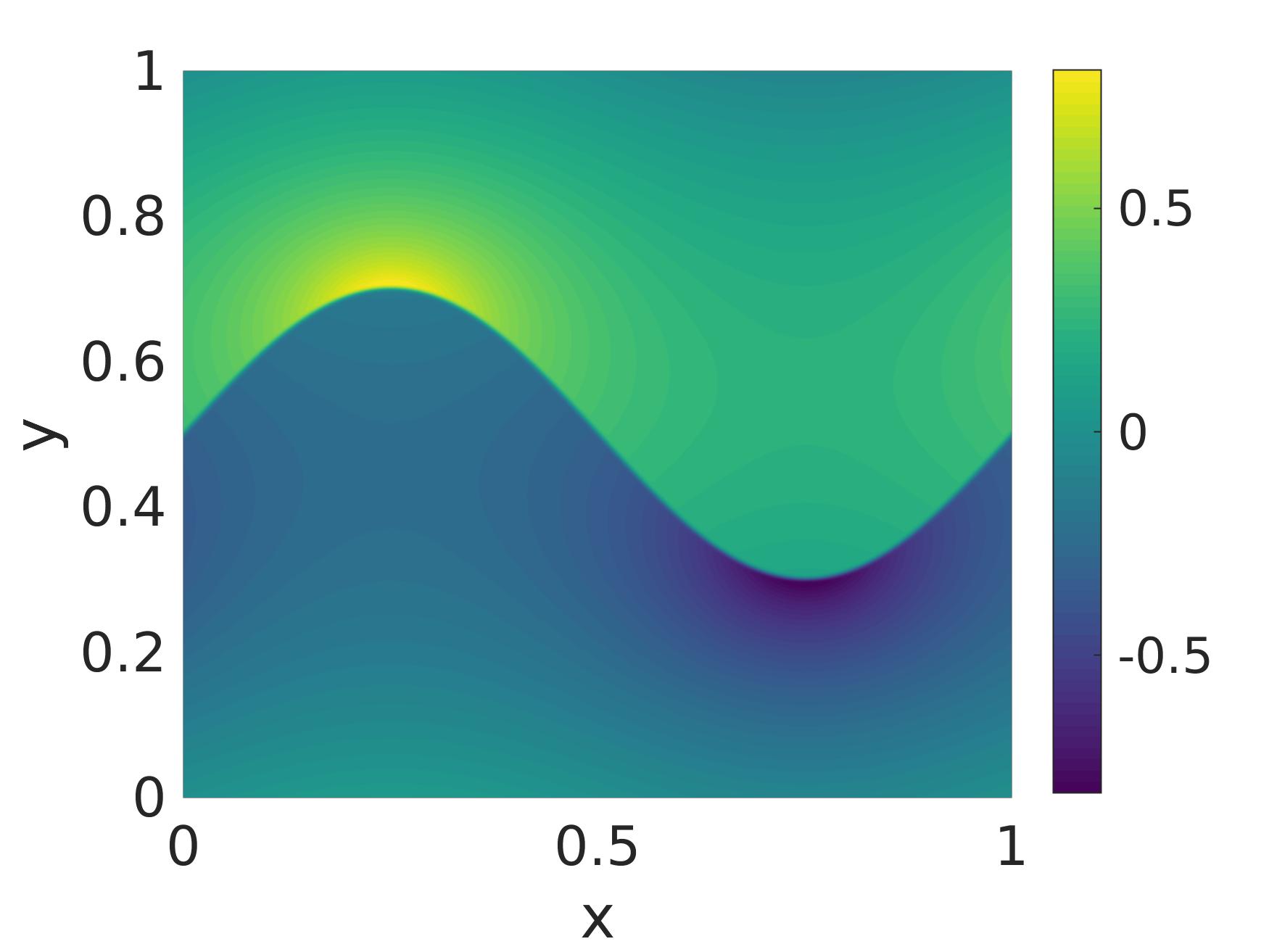}
\caption{$N=1024$, $t=0.0$}
\end{subfigure}
\begin{subfigure}{.4\textwidth}
\includegraphics[width=\textwidth]{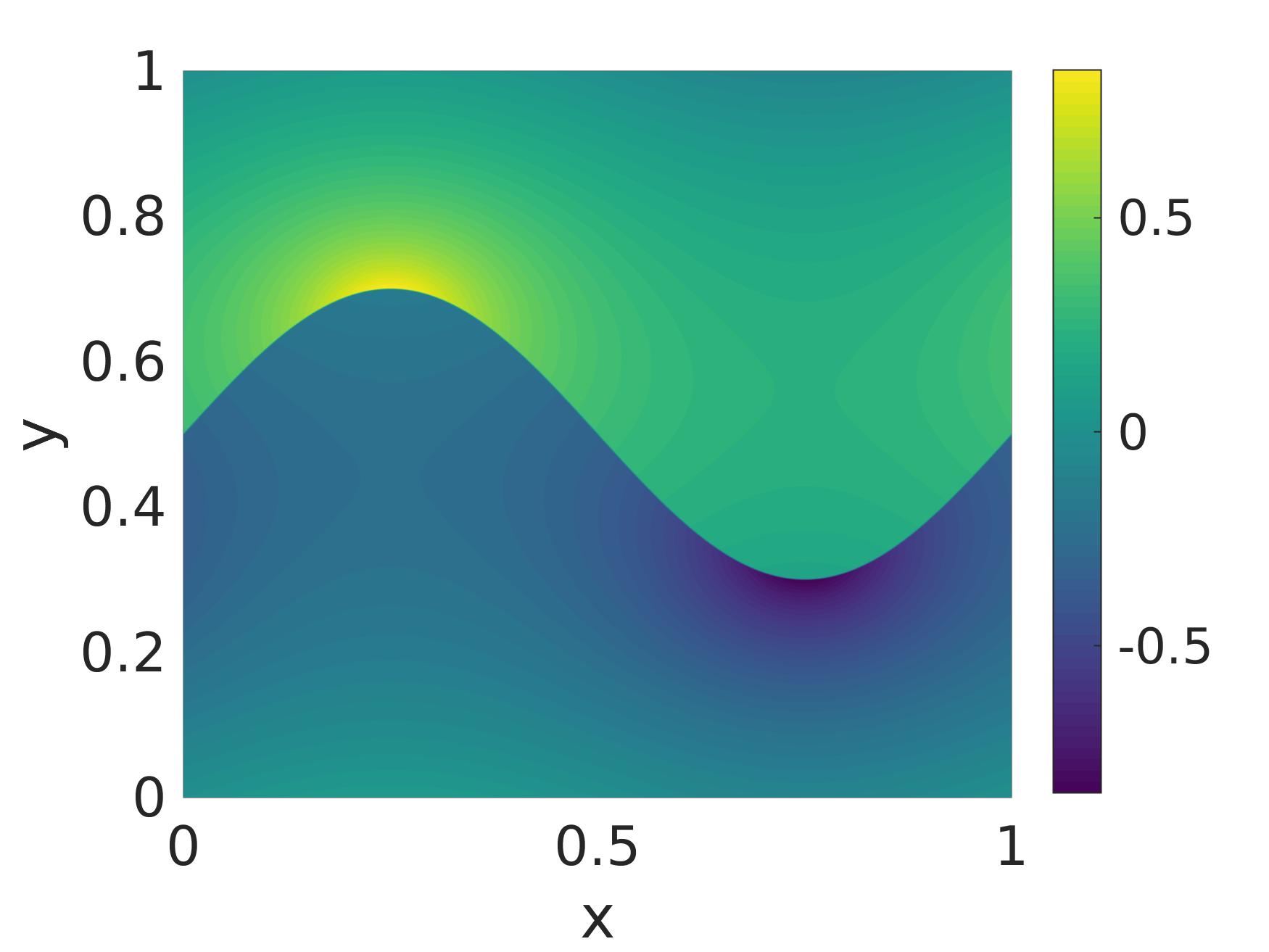}
\caption{$N=4096$, $t=0.0$}
\end{subfigure}

\begin{subfigure}{.4\textwidth}
\includegraphics[width=\textwidth]{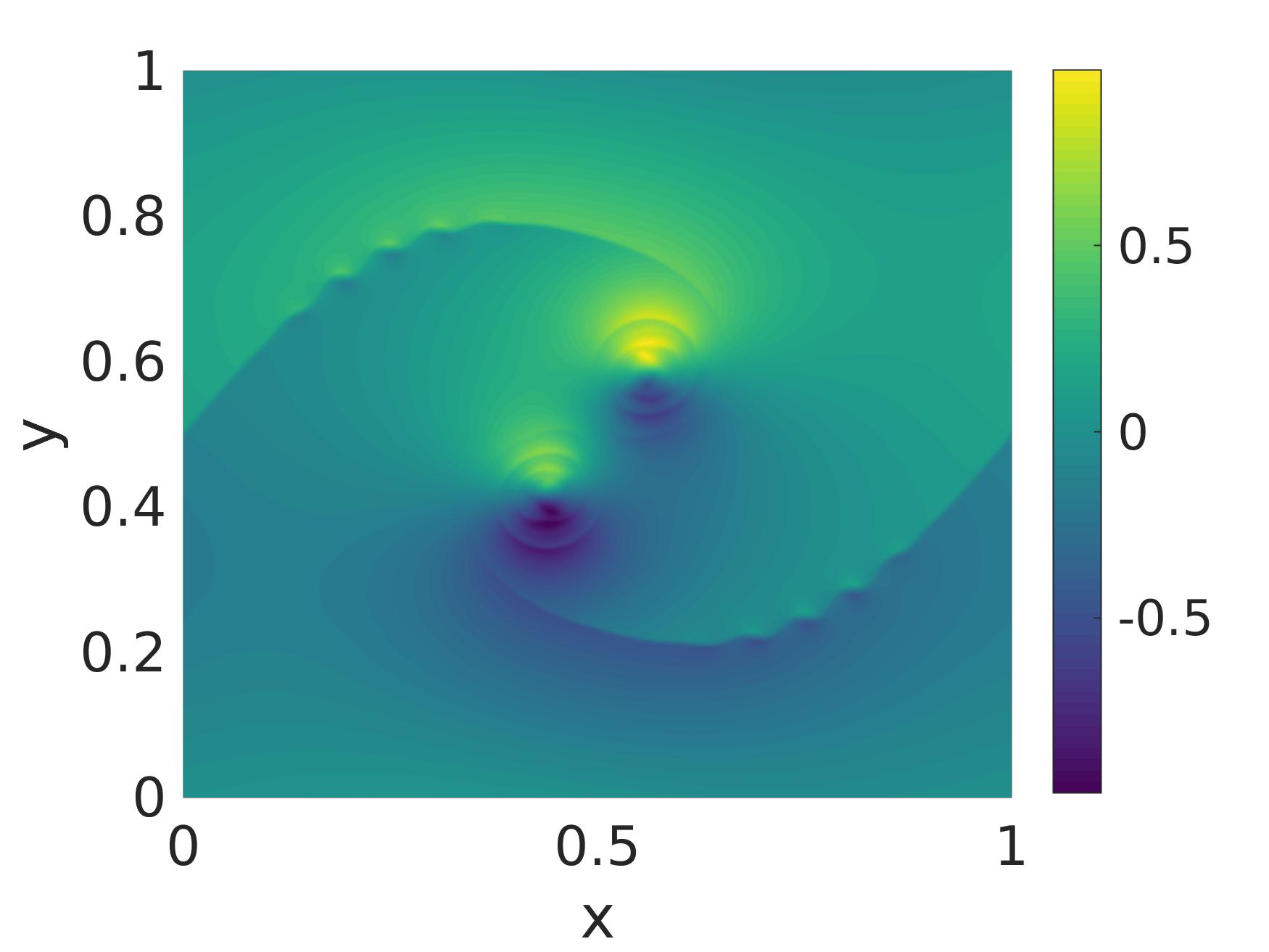}
\caption{$N=1024$, $t=1.0$}
\end{subfigure}
\begin{subfigure}{.4\textwidth}
\includegraphics[width=\textwidth]{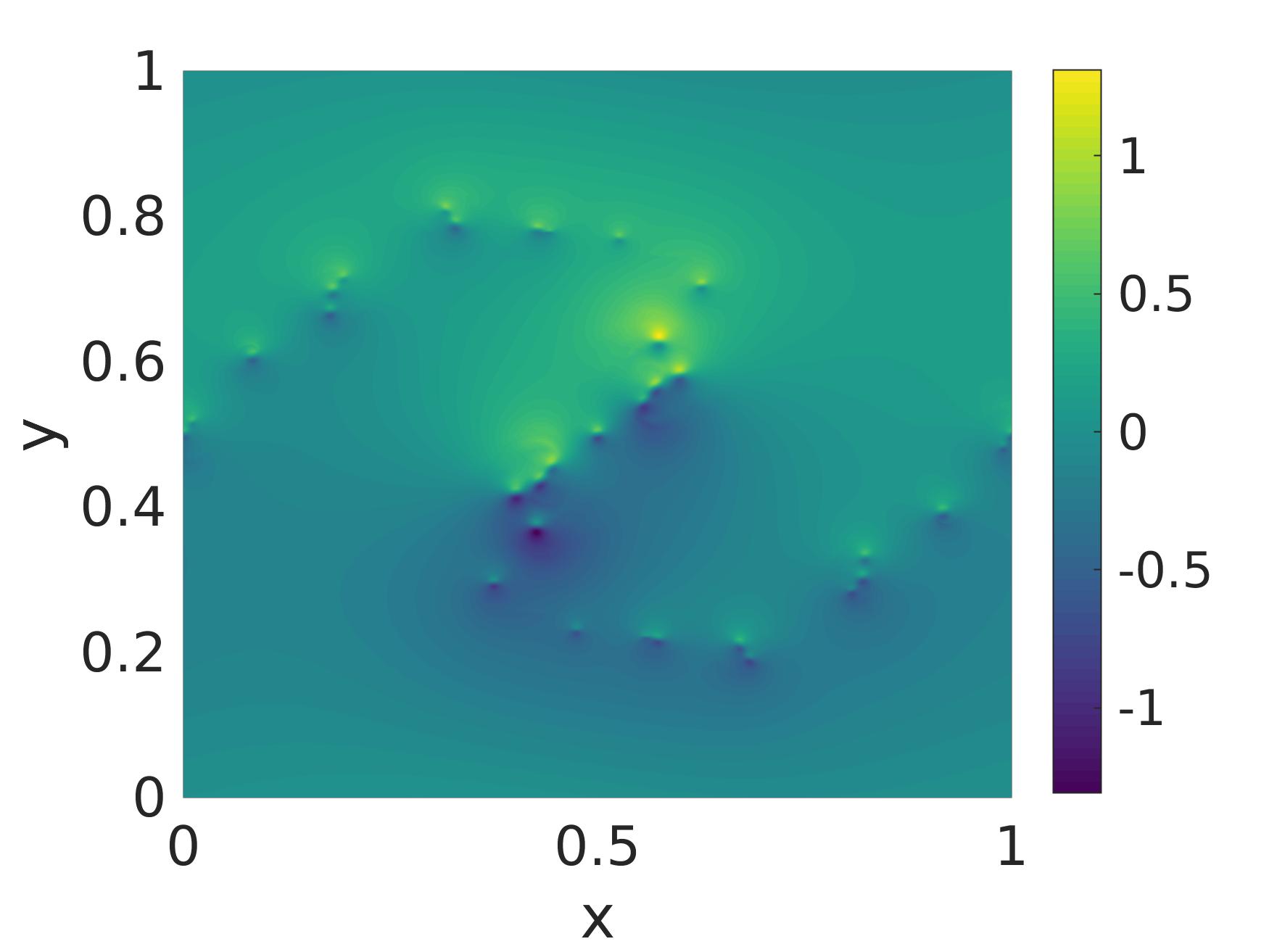}
\caption{$N=4096$, $t=1.0$}
\end{subfigure}

\caption{Deterministic evolution of sinuisoidal vortex sheet with Navier-Stokes-like diffusion (viscosity parameter $\epsilon = 0.01$). Horizontal $x$-component of velocity at initial time and final time, for resolutions $N=1024$ and $N=4096$.}
\label{fig:detsinslinitial}
\end{figure}

We consider a vorticity distributed uniformly along the graph
\[
\Gamma 
=
\Set{x = (x_1,x_2)\in {T}^2}{x_2 = 0.2\,\sin(2\pi x_1)},
\]
and we recall that in the numerical implementation in SPHINX, the torus ${T}^2$ is identified with $[0,1]^2$. The vorticity is given by
\[
\overline{\omega}(x)
=
\delta(x-\Gamma) - \int_{{T}^2} \, d\Gamma.
\]
The second term in the definition of $\overline{\omega}$ is a constant which serves to ensure that $\int_{{T}^2} \overline{\omega} \, dx = 0$, i.e. it enforces the vanishing of the $0$-th Fourier coefficient. The initial velocity field $\overline{u}\in L^2_x$ is chosen so that $\div(\overline{u}) = 0$, $\curl(\overline{u}) = \overline{\omega}$. Given a grid size $N$, our numerical approximation $\overline{u}_N \approx \overline{u}$ is obtained by mollification $\overline{u}_N = \psi_{\rho_N} \ast \overline{u}$ against a mollifier $\psi_{\rho_N}(x) := \rho_N^{-2} \psi(x/\rho_N)$, with $\psi(x)$ a third-order B-spline. The smoothing parameter $\rho_N$ is chosen of the form $\rho_N = \rho/N$ for a fixed constant $\rho>0$. For the present simulation, we have set $\rho = 10$. Further details on the construction of this initial data can be found in \cite[Section 5.3]{LMP2019}.

We point out that this initial data belongs to the so-called \emph{Delort class} \cite{Delort1991}. It was recently shown in \cite{LM2019} that the numerical approximations, generated by the spectral viscosity method, converge on increasing the resolution and up to a subsequence, to a weak solution of the incompressible Euler equations.  
Given this context, we have computed the numerical solution up to final time $T=1$, and for resolutions $N\in \{128,256,\ldots,8192\}$. The numerical diffusion operator was chosen so as to mimic the form of the diffusion term in the underlying Navier-Stokes equations \eqref{eq:NavierStokes} by setting $m_N=0$ and consequently, $Q_N = I$ in \eqref{eq:Euler}. For these computations, we set $\epsilon_N = \epsilon/N$, $\epsilon = 0.01$. A representative illustration of the initial data and evolution of the computed approximate solutions at different resolutions $N=1024$, $N=4096$ can be found in figure \ref{fig:detsinslinitial}. From this figure, we observe that the initial vortex sheet breaks up into smaller and smaller vortices, on increasing resolution. 

Our objective is to validate our theory on the connection between the uniform decay of the structure function and the conservation of energy. To this end, we first consider the temporal evolution of the numerical structure function \eqref{eq:sfnum} (cp. figure \ref{fig:detsinsl_sf}). Indicated in figure \ref{fig:detsinsl_sf} are representative plots of the numerical structure functions evaluated at different times $t=0.0$, $0.4$, $1.0$ during the evolution of the vortex sheet, and at the various resolutions considered. In addition, we indicate as a black dashed line the graph of $r \mapsto C^\Delta_{\mathrm{max}} r^{1/2}$, where $C^\Delta_{\mathrm{max}} = C^\Delta_{\mathrm{max}}(\alpha = 1/2; t=0)$ is determined from \eqref{eq:Cmax}, at resolution $\Delta = 1/8192$. At the initial time $t=0$, the expected scaling $S_2(r) \sim r^{1/2}$ of the structure function of the vortex sheet at resolved scales is clearly visible. For a fixed resolution $\Delta = 1/N$, it is straightforward to observe that the resulting numerical approximation cannot represent non-smooth features on scales $r \lesssim \Delta$ and the structure function scales as $S_2(r) \sim r$, for $r \lesssim \Delta$ in figure \ref{fig:detsinsl_sf}.

\begin{figure}[H]
\begin{subfigure}{.3\textwidth}
\includegraphics[width=\textwidth]{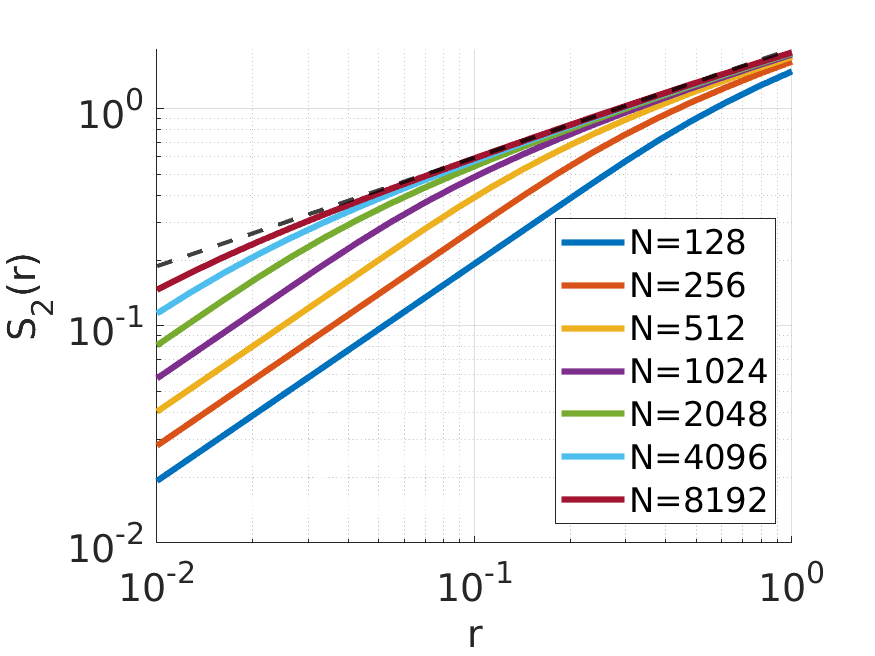}
\caption{$t=0.0$}
\end{subfigure}
\begin{subfigure}{.3\textwidth}
\includegraphics[width=\textwidth]{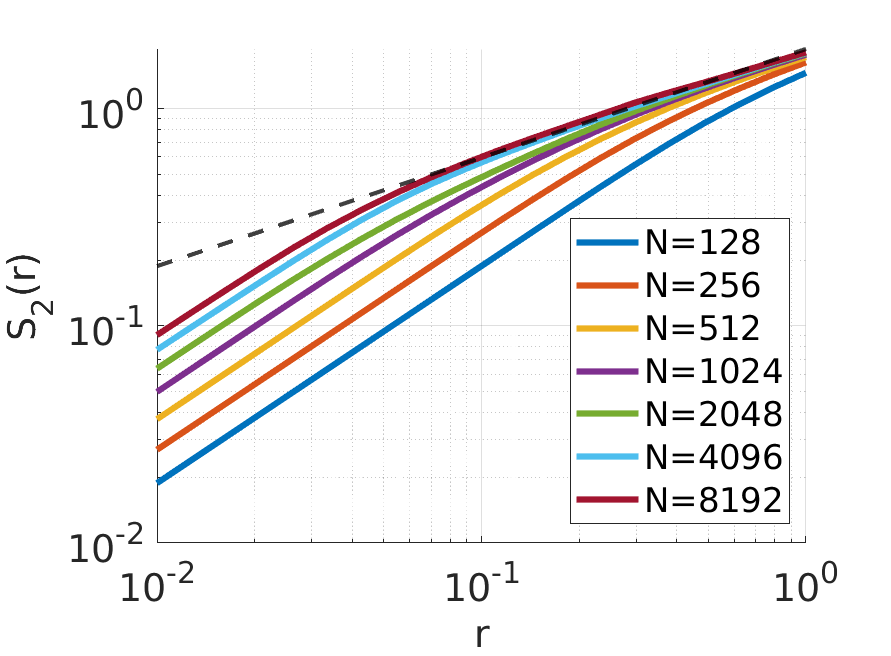}
\caption{$t=0.4$}
\end{subfigure}
\begin{subfigure}{.3\textwidth}
\includegraphics[width=\textwidth]{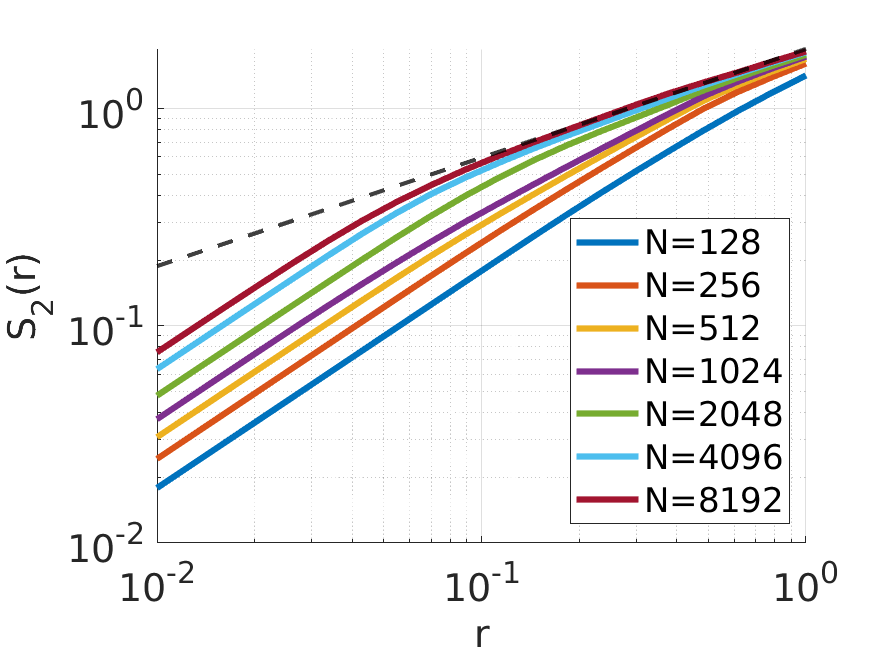}
\caption{$t=1.0$}
\end{subfigure}
\caption{Temporal evolution of structure function for deterministic sinusoidal vortex sheet initial data, for different resolutions $\Delta = 1/N$. The black dashed line indicates the best upper bound $C^\Delta_{\mathrm{max}} r^\alpha$ computed at $t = 0$, with exponent $\alpha = 1/2$, and at the finest resolution considered, $\Delta = 1/8192$.}
\label{fig:detsinsl_sf}
\end{figure}

Figures \ref{fig:detsinsl_sf} (A)-(C) clearly indicate a uniform decay of the structure function over time, and uniformly in $N$, with a decay exponent that is the same as the decay exponent of the structure function initially. 

This uniform decay of the structure functions is further confirmed by considering the evolution of the compensated energy spectra $K \mapsto K^\lambda E(K)$, where we choose the exponent $\lambda = 2$. This choice is consistent with a $S_2(r) \le C r^\alpha$, where $\alpha = (\lambda-1)/2 = 1/2$, decay of the structure function. 

\begin{figure}[H]
\begin{subfigure}{.3\textwidth}
\includegraphics[width=\textwidth]{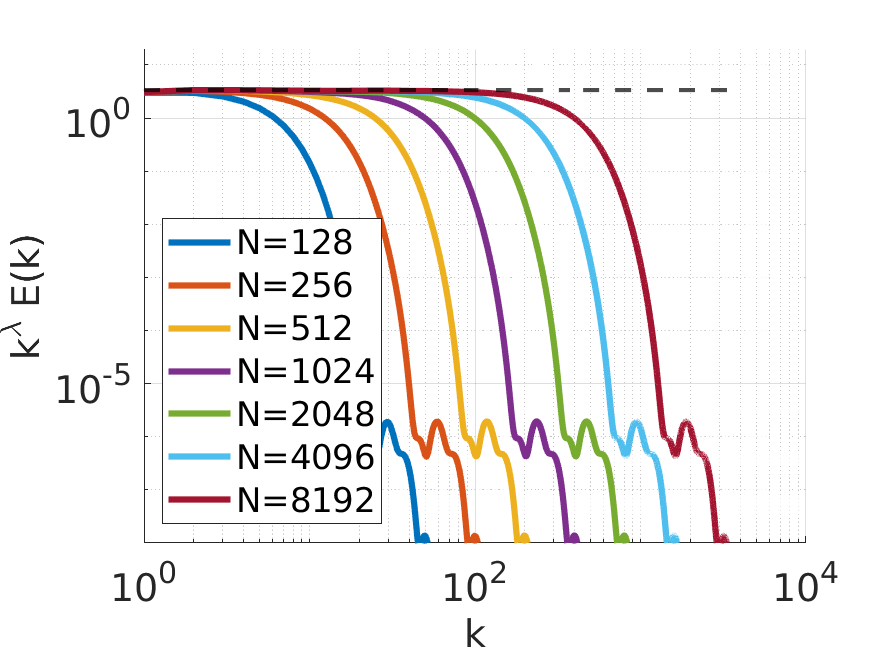}
\caption{$t=0.0$}
\end{subfigure}
\begin{subfigure}{.3\textwidth}
\includegraphics[width=\textwidth]{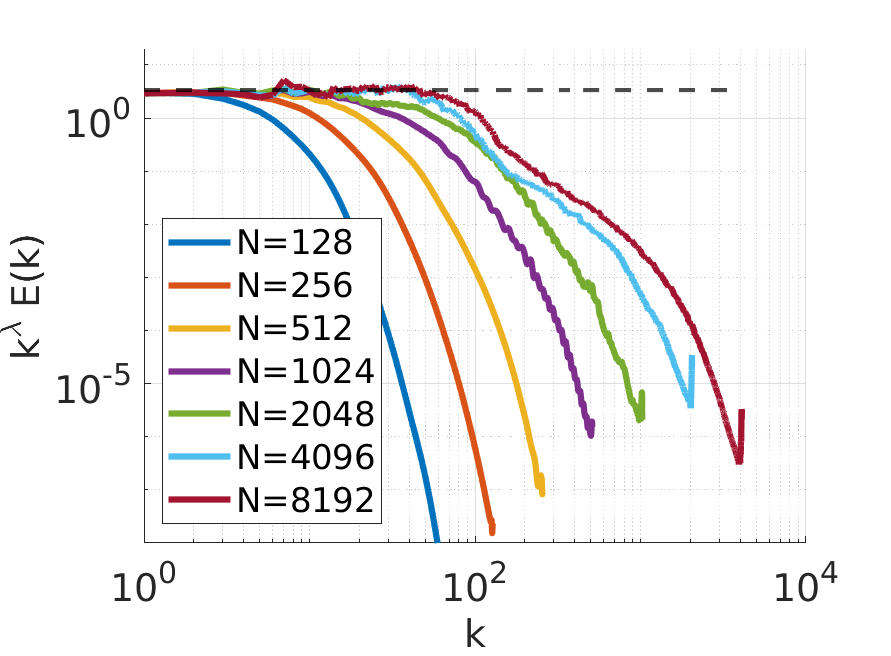}
\caption{$t=0.4$}
\end{subfigure}
\begin{subfigure}{.3\textwidth}
\includegraphics[width=\textwidth]{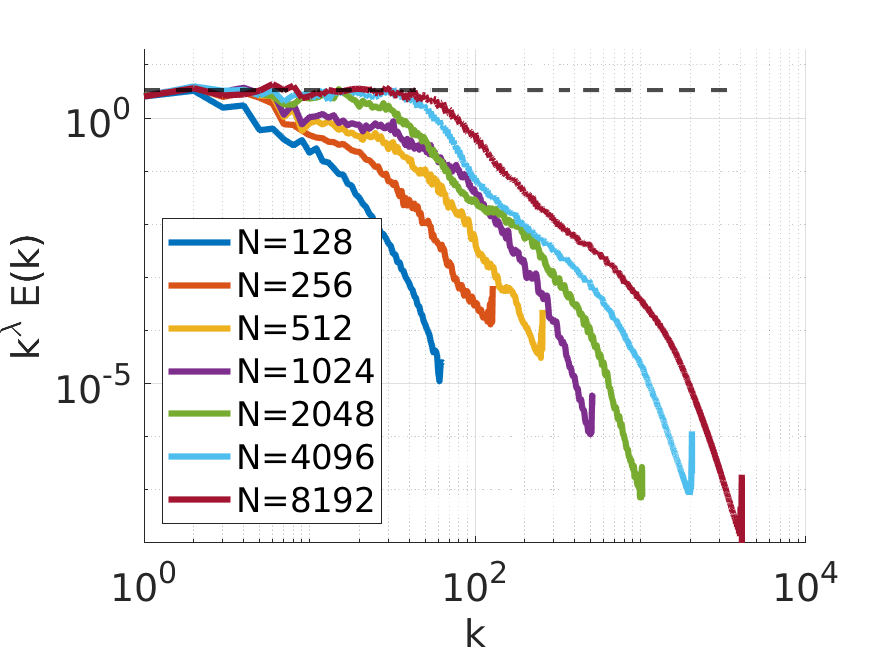}
\caption{$t=1.0$}
\end{subfigure}
\caption{Temporal evolution of compensated energy spectra $K^\lambda E(K)$ for deterministic sinusoidal vortex sheet initial data, with $\lambda = 2$.}
\label{fig:detsinsl_cs}
\end{figure}

As can be seen from figure \ref{fig:detsinsl_cs} (A), the initial data follow the expected scaling $E(K) \sim K^{-2}$. This scaling appears to be mostly preserved at later times, cp. figure \ref{fig:detsinsl_cs} (B), (C), with only some small fluctuations in the compensated spectra. These fluctuations might imply $E(K) \le CK^{-2+\epsilon}$ for a small $\epsilon>0$, incorporating \emph{intermittent} corrections to the structure function. Nevertheless, this form of the energy spectrum clearly implies the compactness required for energy conservation. 

Since the above numerical results strongly suggest a decay of the structure function as $S_2(r) \le C r^\alpha$, with $\alpha=1/2$, we consider the temporal evolution of the best-decay constant $C_\mathrm{max}^\Delta(\alpha=1/2;t)$ (cp. \eqref{eq:Cmax}), which is displayed in figure \ref{fig:detsinsl_Cmax}, as well as its energy spectral counterpart $D_\mathrm{max}^\Delta(\lambda=2;t)$, evaluated according to \eqref{eq:Dmax}.

\begin{figure}[H]
\begin{subfigure}{.45\textwidth}
\includegraphics[width=\textwidth]{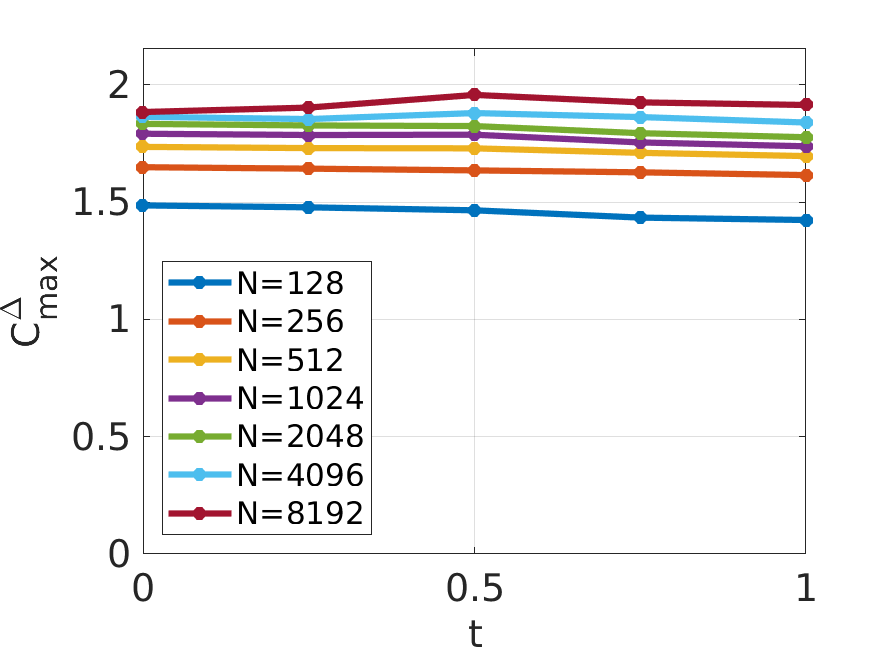}
\caption{$C^\Delta_\mathrm{max}(\alpha=1/2;t)$}
\end{subfigure}
\begin{subfigure}{.45\textwidth}
\includegraphics[width=\textwidth]{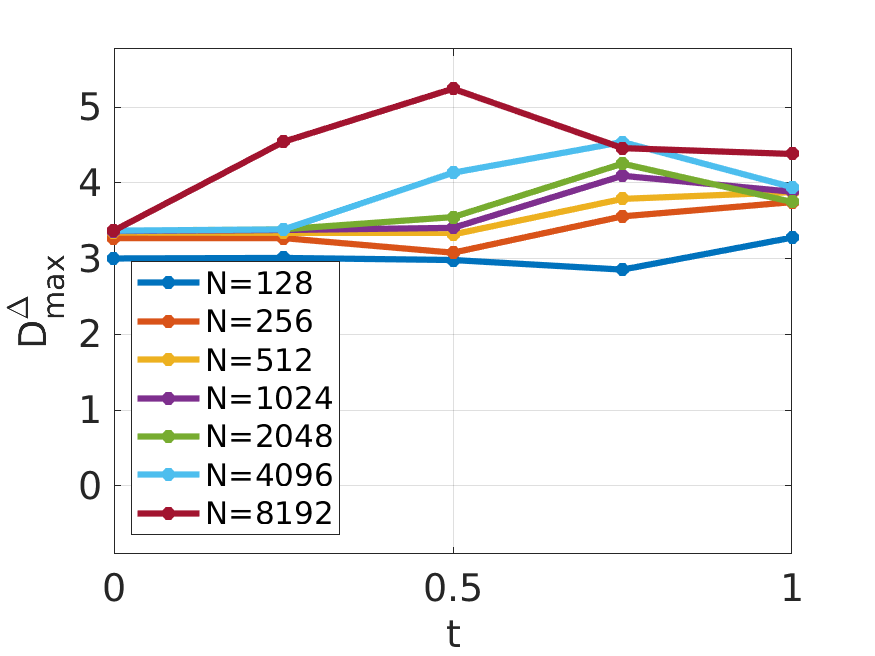}
\caption{$D^\Delta_\mathrm{max}(\lambda=2;t)$}
\end{subfigure}

\caption{Temporal evolution of $C^\Delta_\mathrm{max}$ (eq. \eqref{eq:Cmax}) and $D^\Delta_\mathrm{max}$ (eq. \eqref{eq:Dmax}) for deterministic sinusoidal vortex sheet.}
\label{fig:detsinsl_Cmax}
\end{figure}

Figure \ref{fig:detsinsl_Cmax} strongly indicates that $C^\Delta_{\mathrm{max}}(\alpha=1/2;t)$ remains uniformly bounded in time $t\in [0,T]$, as $\Delta \to 0$. \revision{Thus, from the above figures, we clearly infer that the structure functions (and spectra) converge on increasing resolution. This \emph{saturation} of structure functions, with increasing resolution, is reminiscent of similar observations of convergence of structure functions, but with increasing Reynolds number, for homogeneous isotropic 3D turbulent flows, reported for instance in the recent paper \cite{Iyer2020}}.

Finally, we consider directly the evolution of the energy. Here, we are faced with the difficulty that the initial values of the numerical approximations converge at the same time as the viscosity parameter $\epsilon_N \to 0$. Keeping this in mind, we consider the \emph{relative energy dissipation}, 
\[
\frac{\Delta E}{E}
:=
\frac{E^\Delta_t  - \overline{E}_{0}}{\overline{E}_{0}},
\]
which depends on $\Delta$ and the time $t$, as well as a reference value $\overline{E}_0$ for the initial energy in the limit $\Delta \to 0$. We obtain this reference value by extrapolation of the initial energy $E^\Delta_0$ for the resolutions $\Delta \in \{1/8192,1/4096,\ldots,1/128\}$, considered. We have chosen the second-order (Richardson-)extrapolation ansatz
\[
E^\Delta_0 = \overline{E}_0 + c_1 \Delta + c_2 \Delta^2 + O(\Delta^3),
\]
where the constants $\overline{E}_0$, $c_1$ and $c_2$ can be estimated from the values of $E^\Delta_0$, for the highest resolutions $\Delta = 1/8192$, $1/4096$, $1/2048$ considered. Other, higher-order choices for the extrapolation have been checked to lead to very similar results.

The temporal evolution of $\Delta E/E$ is shown in figure \ref{fig:detsinsl_Erel} (A), for these $\Delta = 1/N$. Figure \ref{fig:detsinsl_Erel} (B) compares $\Delta E/E$ at time $t=0$ and $t=T$, at the final time $T=1$, as a function of the resolution $\Delta$. In this figure, we plot both the numerical error in the approximation of the initial data (represented by the blue curve), as well as the numerical energy dissipation (difference between the blue and the red curves). As $\Delta \to 0$, there is a clear indication that $\Delta E/E$, evaluated at both the initial and final times, converges to $0$. Extrapolation of the red curve to $\Delta = 0$ yields a very small value of $\Delta E/E \approx -0.00035$, consistent with a true limiting value of $\Delta E/E = 0$ at $\Delta = 0$. The direct evaluation of the energy is thus consistent with the uniform decay of the structure functions, and a uniform bound on the energy spectra observed above. 

Thus, in this particular case, the theoretical predictions of energy conservation resulting from uniform decay of structure function (spectra) is completely validated. It is worth pointing out that the theory of Delort in \cite{Delort1991} (and its numerical analogue in \cite{LMP2019}) only indicate weak compactness of the approximating sequences. On the other hand, all the numerical evidence points to a strong compactness of the limit solution, hinting at more regularity of the limit. 

\begin{figure}[H]
\begin{subfigure}{0.45\textwidth}
\includegraphics[width=\textwidth]{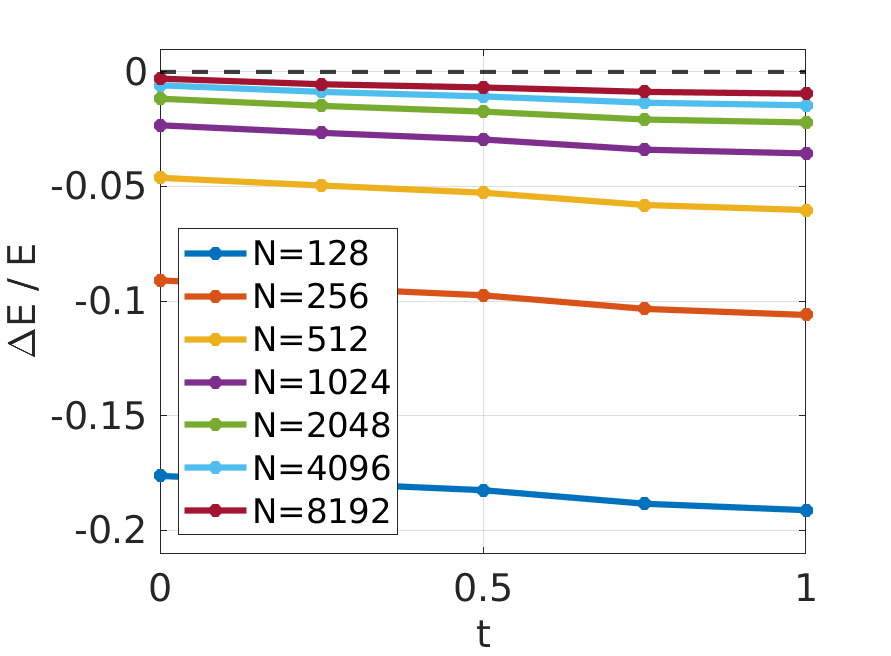}
\caption{rel. energy dissipation vs $t$}
\end{subfigure}
\begin{subfigure}{0.45\textwidth}
\includegraphics[width=\textwidth]{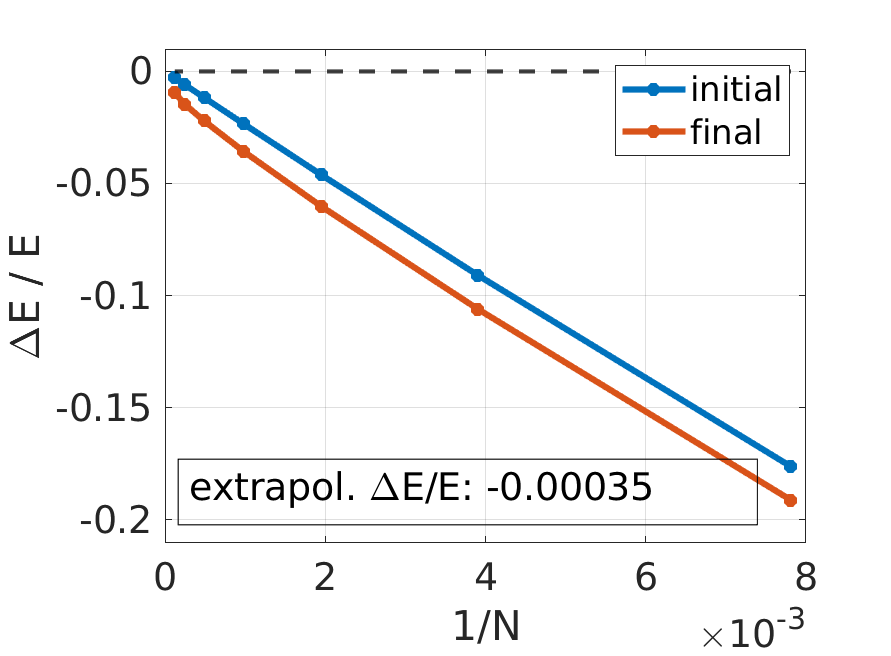}
\caption{rel. E'diss. vs $\Delta$}
\end{subfigure}
\caption{Deterministic sinusoidal vortex sheet with Navier-Stokes-like diffusion: Relative energy dissipation as a function of $t$ (left), and as a function of $\Delta = 1/N$ at the final time $t=1$ (right).}
\label{fig:detsinsl_Erel}
\end{figure}
\subsubsection{Statistical initial data}
\label{sec:sinusoidal}
Next, we consider an example of the initial data $\overline{\mu} \in \P(L^2_x)$, with $\overline{\mu}$ not being a Dirac measure. To this end,  we take the numerical initial data $\overline{u}_N(x) \in L^2_x$ of the previous section (with smoothing parameter $\rho_N = \rho/N$, $\rho=5$), and define a random perturbation as
\[
\overline{u}^\Delta(x;\omega) := \mathbb{P}(\overline{u}_N(x_1,x_2+\sigma_\alpha(x_1;\omega)),
\]
where $\mathbb{P}$ denotes the Leray projection onto divergence-free vector fields, followed by a projection onto the first $N$ Fourier modes, and $\sigma_\alpha(x,\omega)$ is a random function which is used to randomly perturb the vortex sheet: Fix $q\in \mathbb{N}$ and a perturbation size $\alpha>0$. Given $\omega = (\alpha_1, \ldots, \alpha_q,\beta_1,\ldots,\beta_q)$, we define
\[
\sigma_\alpha(x_1;\omega) := \sum_{k=1}^q \alpha_k \sin(k 2\pi x_1 - \beta_i),
\]
where $\alpha_1, \dots, \alpha_q \in [0,\alpha]$, and $\beta_1, \dots, \beta_q \in [0,2\pi]$ are i.i.d., uniformly distributed random variables. The initial data $\overline{\mu}^\Delta\in \P(L^2_x)$ is defined as the law of the random fields $\overline{u}^\Delta(x;\omega)$. For our numerical experiment, we have chosen $q=10$, and $\alpha = 1/320$. The numerical diffusion parameter is $\epsilon_N = \epsilon/N$, with $\epsilon = 0.01$. Figure \ref{fig:sinslinitial} shows the $x$-component of the velocity of a typical individual random sample  $\overline{u}^\Delta$, as well as the mean and variance of this component at the initial time $t = 0.0$. The mean and variance at the final time $t=1.0$ are shown in figure \ref{fig:sinslfinal}.

\begin{figure}[H]
\begin{subfigure}{0.32\textwidth}
\includegraphics[width=\textwidth]{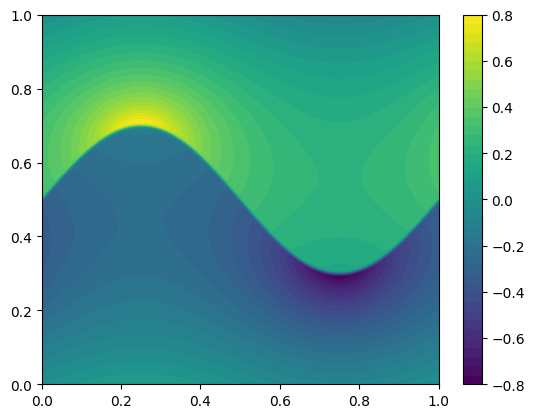}
\caption{mean}
\end{subfigure}
\begin{subfigure}{0.32\textwidth}
\includegraphics[width=\textwidth]{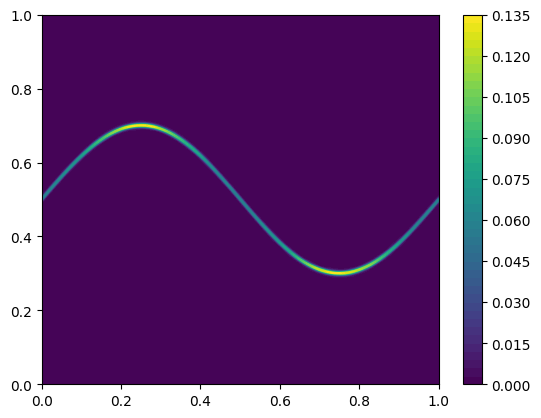}
\caption{variance}
\end{subfigure}

\caption{Perturbed sinusoidal vortex sheet: Individual sample (A), mean (B) and variance (C) at the initial $t=0.0$, $N=1024$.}
\label{fig:sinslinitial}
\end{figure}

\begin{figure}[H]
\center
\begin{subfigure}{0.32\textwidth}
\includegraphics[width=\textwidth]{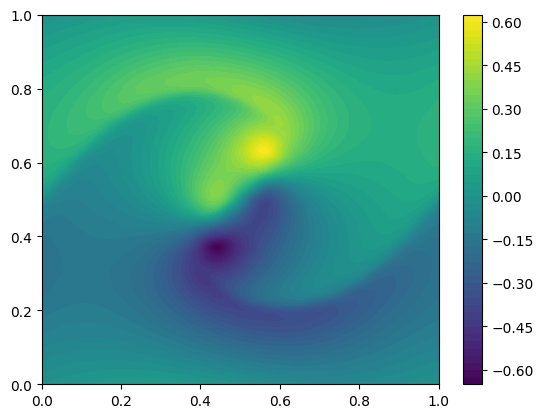}
\caption{mean}
\end{subfigure}
\begin{subfigure}{0.32\textwidth}
\includegraphics[width=\textwidth]{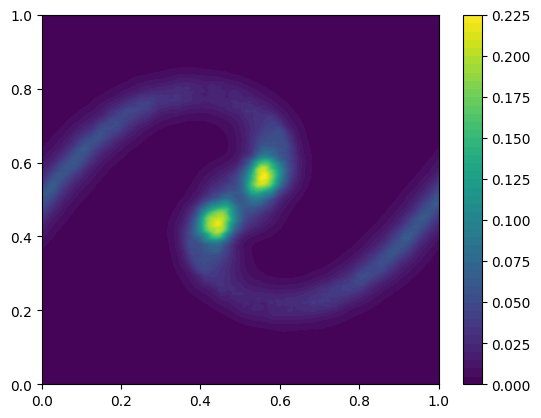}
\caption{variance}
\end{subfigure}

\caption{Perturbed sinusoidal vortex sheet: Individual sample (A), mean (B) and variance (C) at the final time $t=1.0$, $N=1024$.}

\label{fig:sinslfinal}

\end{figure}

We consider the temporal evolution of the structure functions computed from the approximate statistical solution obtained at various resolutions $N \in \{128,256,512,1024\}$. Plots for the numerical structure function \eqref{eq:sfnum} at $t=0,0.5,1$ are shown in figure \ref{fig:sinsl_sf} (A)-(C). Again, we indicate by a black dashed line the best upper bound of the form $C^\Delta_{\mathrm{max}} r^{1/2}$, with $C^\Delta_{\mathrm{max}}$ given by \eqref{eq:Cmax} fixed at time $t=0$, and for the highest considered resolution of $\Delta = 1/1024$. 

\begin{figure}[H]
\begin{subfigure}{.3\textwidth}
\includegraphics[width=\textwidth]{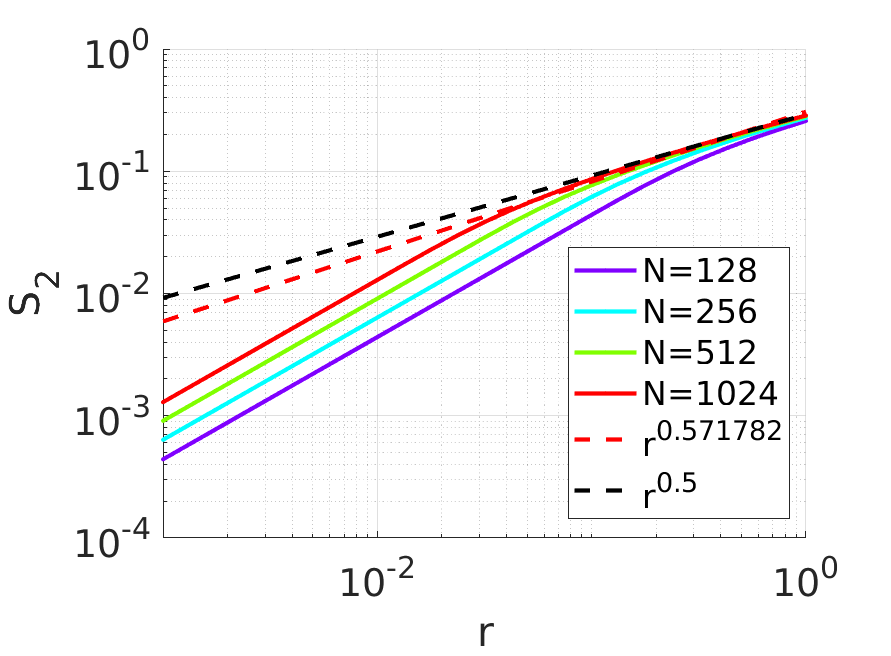}
\caption{$t=0.0$}
\end{subfigure}
\begin{subfigure}{.3\textwidth}
\includegraphics[width=\textwidth]{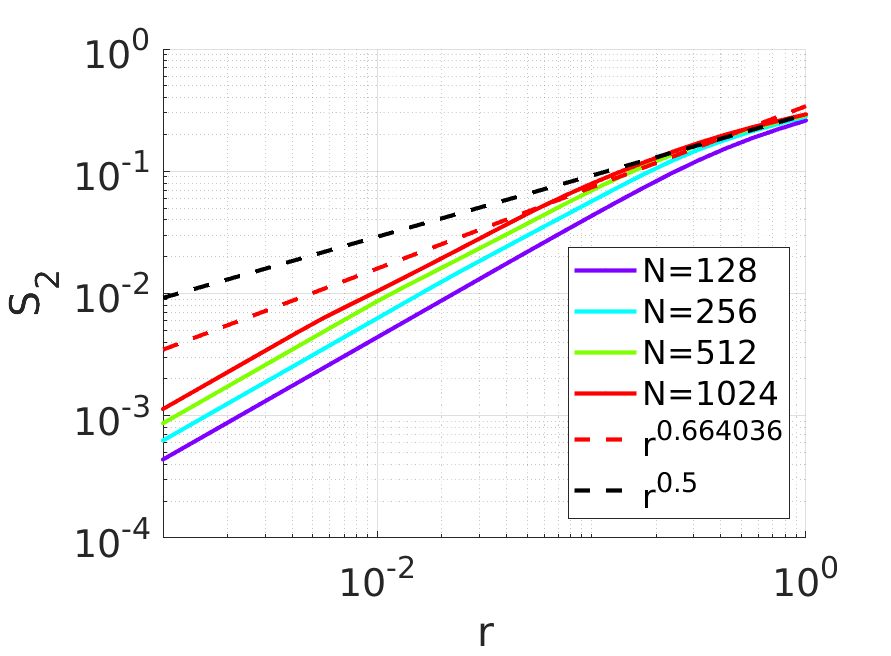}
\caption{$t=0.5$}
\end{subfigure}
\begin{subfigure}{.3\textwidth}
\includegraphics[width=\textwidth]{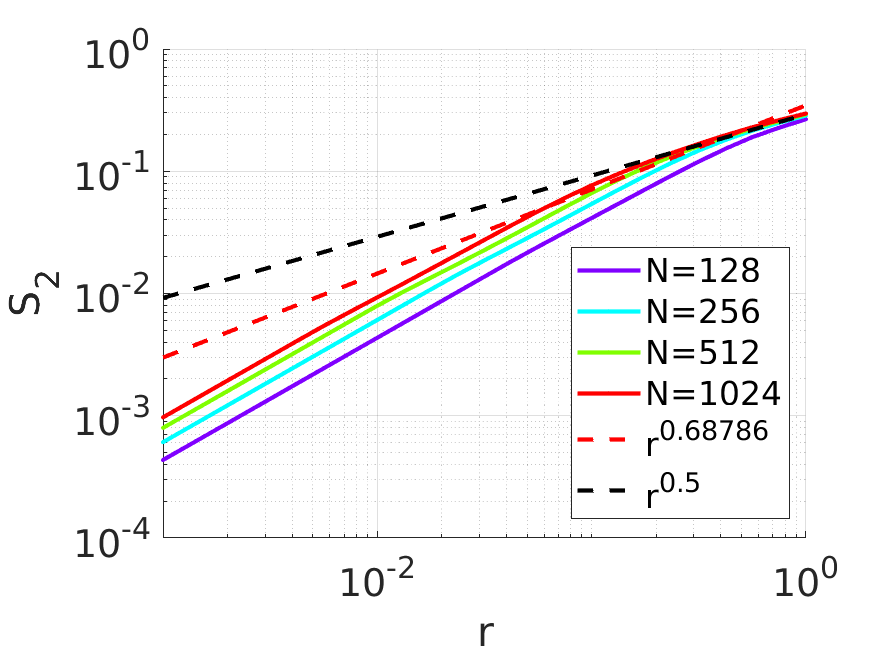}
\caption{$t=1.0$}
\end{subfigure}
\caption{Temporal evolution of structure function for randomly perturbed sinusoidal vortex sheet initial data, for different resolutions $\Delta = 1/N$. The black dashed line indicates the best upper bound $C^\Delta_{\mathrm{max}} r^\alpha$ computed at $t = 0$, with exponent $\alpha = 1/2$, and at the finest resolution considered, $\Delta = 1/1024$.}
\label{fig:sinsl_sf}
\end{figure}

Similarly to figure \ref{fig:detsinsl_sf} in the last section, these plots of the structure function at different $t$ and $N$ indicate a uniform bound $S_2(\mu_t^\Delta;r) \le Cr^{1/2}$. 
To complement these plots of the structure function, we again analyse the (compensated) energy spectra \eqref{eq:Espec}, with exponent $\lambda = 2$. Again, the choice of this value for $\lambda$ is motivated by the relation \eqref{eq:compespec}, according to which a value of $\alpha = 1/2$ is expected to correspond to $\lambda = 2$. The resulting energy spectra are shown in figure \ref{fig:detsinsl_cs}.

\begin{figure}[H]
\begin{subfigure}{.3\textwidth}
\includegraphics[width=\textwidth]{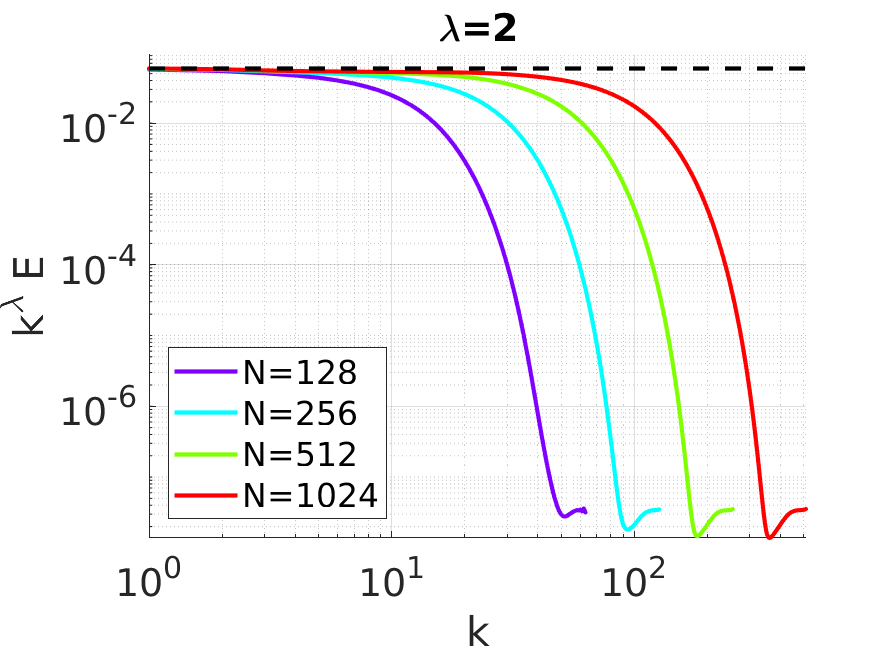}
\caption{$t=0.0$}
\end{subfigure}
\begin{subfigure}{.3\textwidth}
\includegraphics[width=\textwidth]{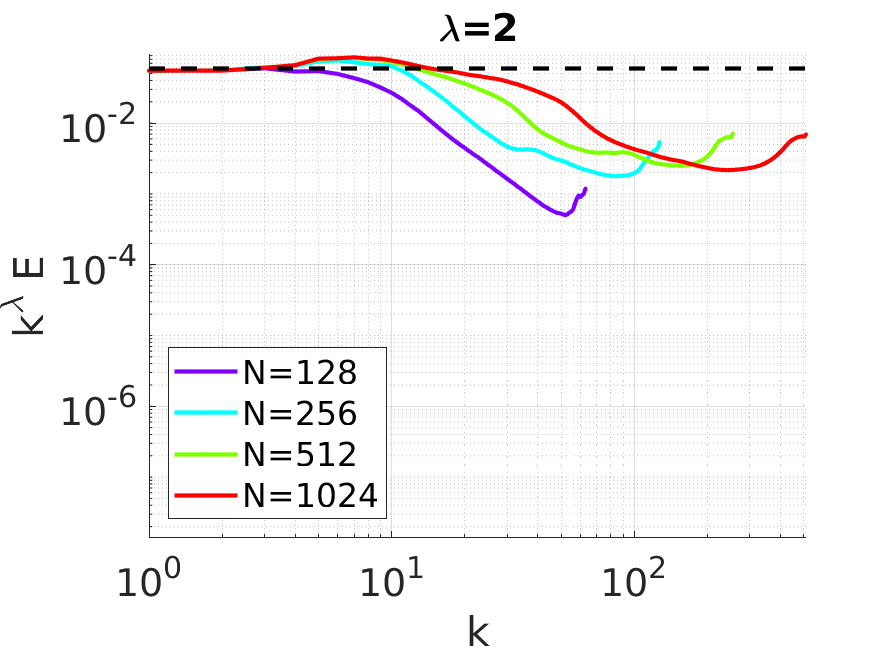}
\caption{$t=0.5$}
\end{subfigure}
\begin{subfigure}{.3\textwidth}
\includegraphics[width=\textwidth]{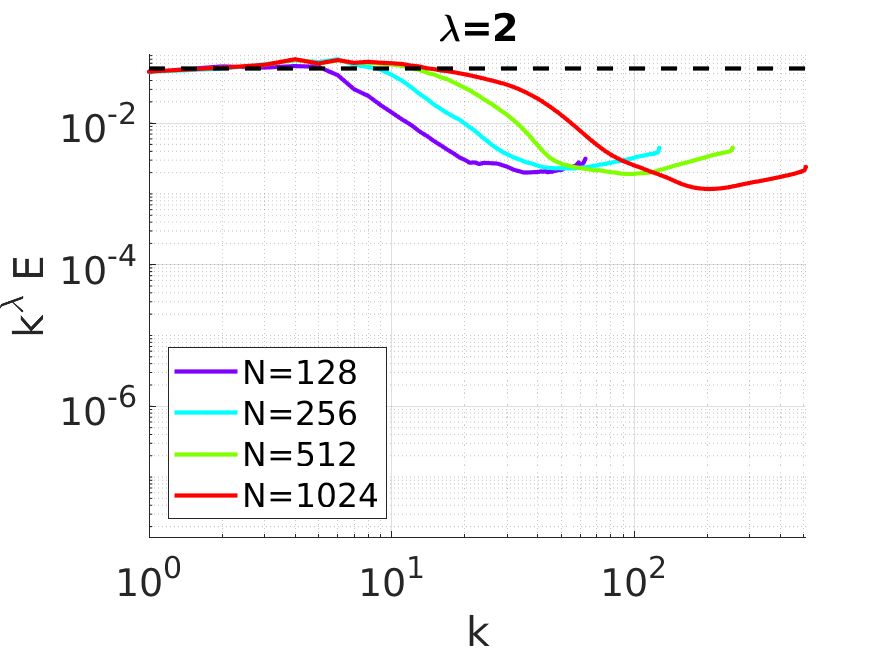}
\caption{$t=1.0$}
\end{subfigure}
\caption{Temporal evolution of compensated energy spectra $K^\lambda E(K)$ for randomly perturbed sinusoidal vortex sheet initial data, with $\lambda = 2$.}
\label{fig:sinsl_cs}
\end{figure}

Again, we observe an exact scaling of the compensated energy spectra for $\mu^\Delta_t$ at $t=0$ (cp. figure \ref{fig:sinsl_cs} (A)). Also at later times, this scaling is approximately preserved, as shown in figure \ref{fig:sinsl_cs} (B),(C), indicated a uniform bound on compensated energy spectra. 

A more quantitative evaluation of the uniform boundedness of the structure function is obtained by tracking the temporal evolution of the best-upper-bound constants $C^\Delta_{\mathrm{max}}(\alpha;t)$ for the structure function \eqref{eq:Cmax} with exponent $\alpha = 1/2$, and $D^\Delta_{\mathrm{max}}(\lambda;t)$ for the compensated energy spectra \eqref{eq:Dmax}, with corresponding exponent $\lambda = 2$. This is shown in figure \ref{fig:sinsl_Cmax}.

\begin{figure}[H]
\begin{subfigure}{.45\textwidth}
\includegraphics[width=\textwidth]{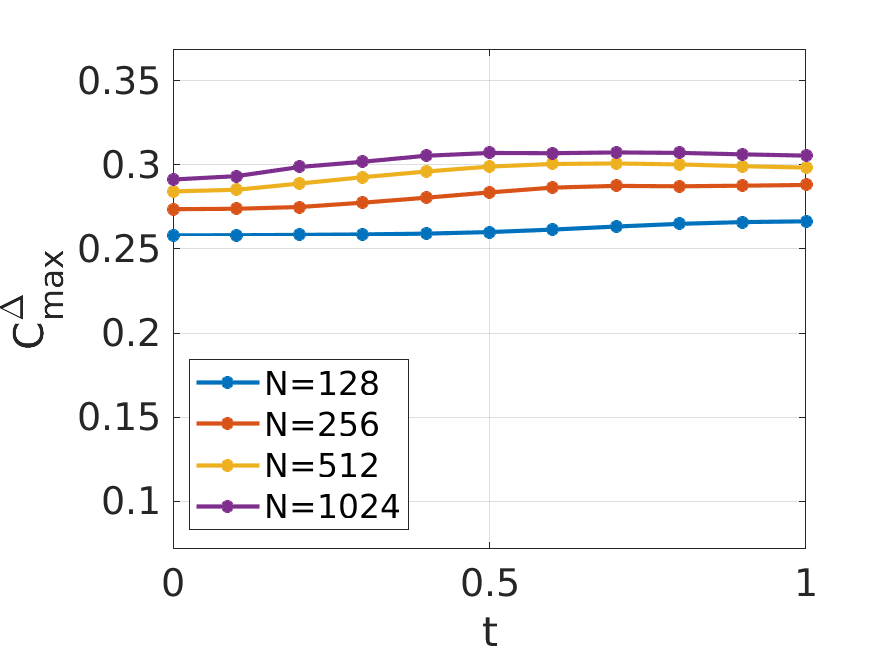}
\caption{$C^\Delta_\mathrm{max}(\alpha=1/2;t)$}
\end{subfigure}
\begin{subfigure}{.45\textwidth}
\includegraphics[width=\textwidth]{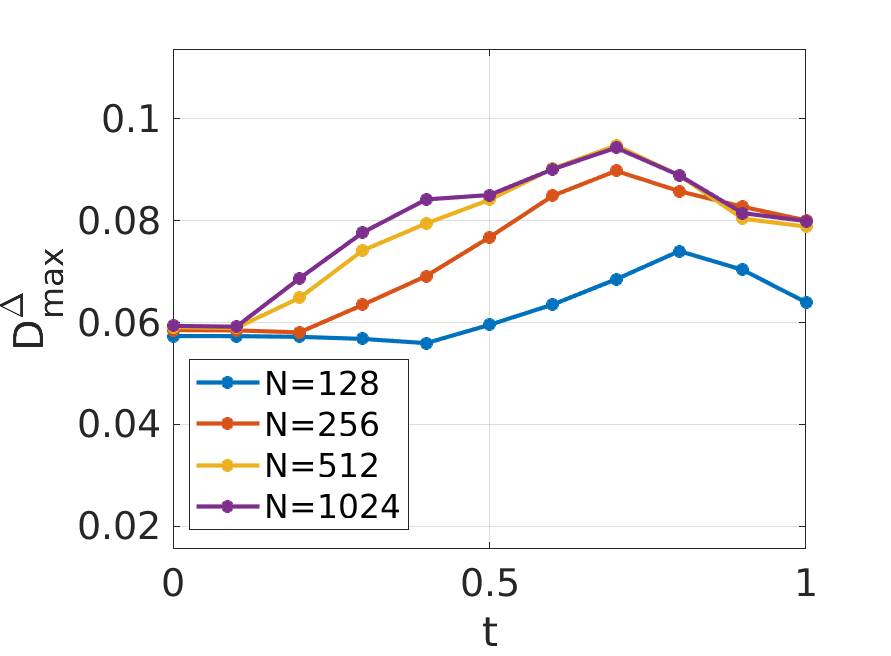}
\caption{$D^\Delta_\mathrm{max}(\lambda=2;t)$}
\end{subfigure}

\caption{Temporal evolution of $C^\Delta_\mathrm{max}$ (eq. \eqref{eq:Cmax}) and $D^\Delta_\mathrm{max}$ (eq. \eqref{eq:Dmax}) for randomly perturbed sinusoidal vortex sheet.}
\label{fig:sinsl_Cmax}
\end{figure}

Figure \ref{fig:sinsl_Cmax} strongly indicates that the structure function does indeed exhibit a uniform scaling $S_2(\mu^\Delta_t;r) \le Cr^{1/2}$, implying energy conservation of the limiting statistical solution.

 We finally consider the direct evaluation of the energy evolution of the approximate statistical solutions. In addition to the sources of error in the energy evolution for the deterministic initial data, we also have to consider another source of error in the Monte-Carlo approximation of the approximate staistical solution $\mu^\Delta_t$. Our Monte-Carlo sampling at resolution $N$ is based on $N$ samples. As is well-known, the typical Monte-Carlo error is
 \begin{align} \label{eq:MCerror}
 \left|\E\left[\Delta E/E\right] - \frac{1}{N} \sum_{i=1}^N \frac{\Delta E_i}{E_i}\right|
 \lesssim 
 \frac{\mathrm{Std}\left[\Delta E/E\right]}{\sqrt{N}},
 \end{align}
 where $\mathrm{Std}\left[\Delta E/E\right]$ is the standard deviation computed based on the $N$ MC-samples $(\Delta E/E)_1, \dots, (\Delta E/E)_N$. For the statistical solutions considered, we will display this MC error by error bars and a shaded region.

\begin{figure}[H]
\begin{subfigure}{0.45\textwidth}
\includegraphics[width=\textwidth]{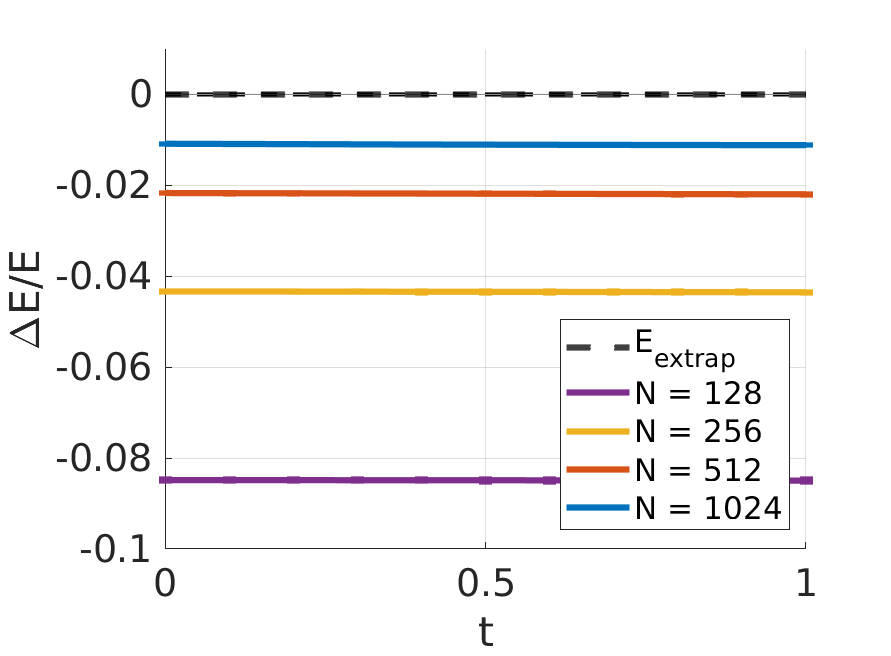}
\caption{rel. energy dissipation vs $t$}
\end{subfigure}
\begin{subfigure}{0.45\textwidth}
\includegraphics[width=\textwidth]{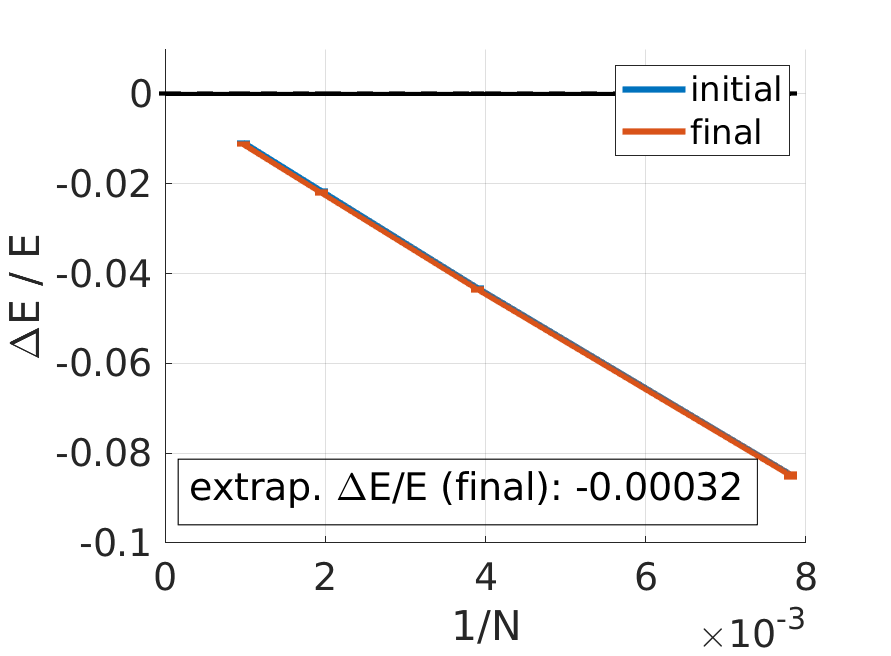}
\caption{rel. E'diss. vs $\Delta = 1/N$}
\end{subfigure}
\caption{Randomly perturbed sinusoidal vortex sheet: Relative energy dissipation $\E[\Delta E/E]$ as a function of $t$ (left), and as a function of $\Delta = 1/N$ at the final time $t=1$ (right).}
\label{fig:sinsl_Erel}
\end{figure}

It turns out that for the current initial data, the MC error in the energy is very small, so that the shaded regions are almost invisible. In this case, the numerical error in the approximation of the initial data dominates. We plot the computed $\Delta E/E$ in figure \ref{fig:sinsl_Erel}. As in the last section, the reference value $\overline{E}_0$ is determined by a second-order Richardson-extrapolation of the computed initial energy $E_0^\Delta$ to $\Delta = 0$.

Figure \ref{fig:sinsl_Erel} clearly indicates that the energy dissipation is very small for this case, for all resolutions considered, and $\Delta E/E$ appears to converge to $0$, as $N \to \infty$, again indicating energy conservation in the limit. We have also indicated the value of $\Delta E/E$ at the final time $t=T$, and (second-order) extrapolated to $\Delta = 0$, based on the available values of $E^\Delta_T$ for $\Delta = 1/1024$, $1/512$ and $1/256$. This extrapolation suggests that  $\Delta E/E \approx 0.00032$, which is orders of magnitude smaller than the error of $\Delta E/E$ at the initial time (whose limit $\Delta \to 0$ is exactly $0$), which is also visible in figure \ref{fig:sinsl_Erel} (B). Thus, also for the randomly perturbed sinusoidal vortex sheet, the limiting statistical solution is expected to be energy conservative.

Finally, comparing figures \ref{fig:sinsl_Erel} for the SV scheme and \ref{fig:detsinsl_Erel} for Navier-Stokes-like diffusion clearly shows that the Navier-Stokes-like diffusion is much more diffusive. This highlights the better approximation properties of the (formally) spectrally accurate SV scheme, as opposed to a similar scheme with diffusion applied to all Fourier modes.
\subsection{Vortex sheet without distinguished sign} 
\label{sec:unsigned}

The previous numerical experiment considered a vortex sheet of (essentially) distinguished sign. For this type of initial data, the existence of solutions has been proven rigorously by compensated compactness methods, in the celebrated work of Delort \cite{Delort1991}. When the vortex sheet initial data is not necessarily of distinguished sign, then no existence results for weak solutions are known. Based on numerical experiments by Krasny \cite{Krasny}, which have shown that vortex sheets develop a much more complex roll-up without a sign-restriction, it has in fact been conjectured \cite{Majda1988}, \cite[p.447]{Majda2001} that approximate solution sequences for initial data without distinguished sign might not converge to a weak solution, and instead exhibit the phenomenon of concentrations in the limit, thus  necessitating a more general concept of measure-valued solutions. Our next numerical experiment therefore considers the case of a vortex sheet without distinguished sign.

\begin{figure}[H]
\begin{subfigure}{0.32\textwidth}
\includegraphics[width=\textwidth]{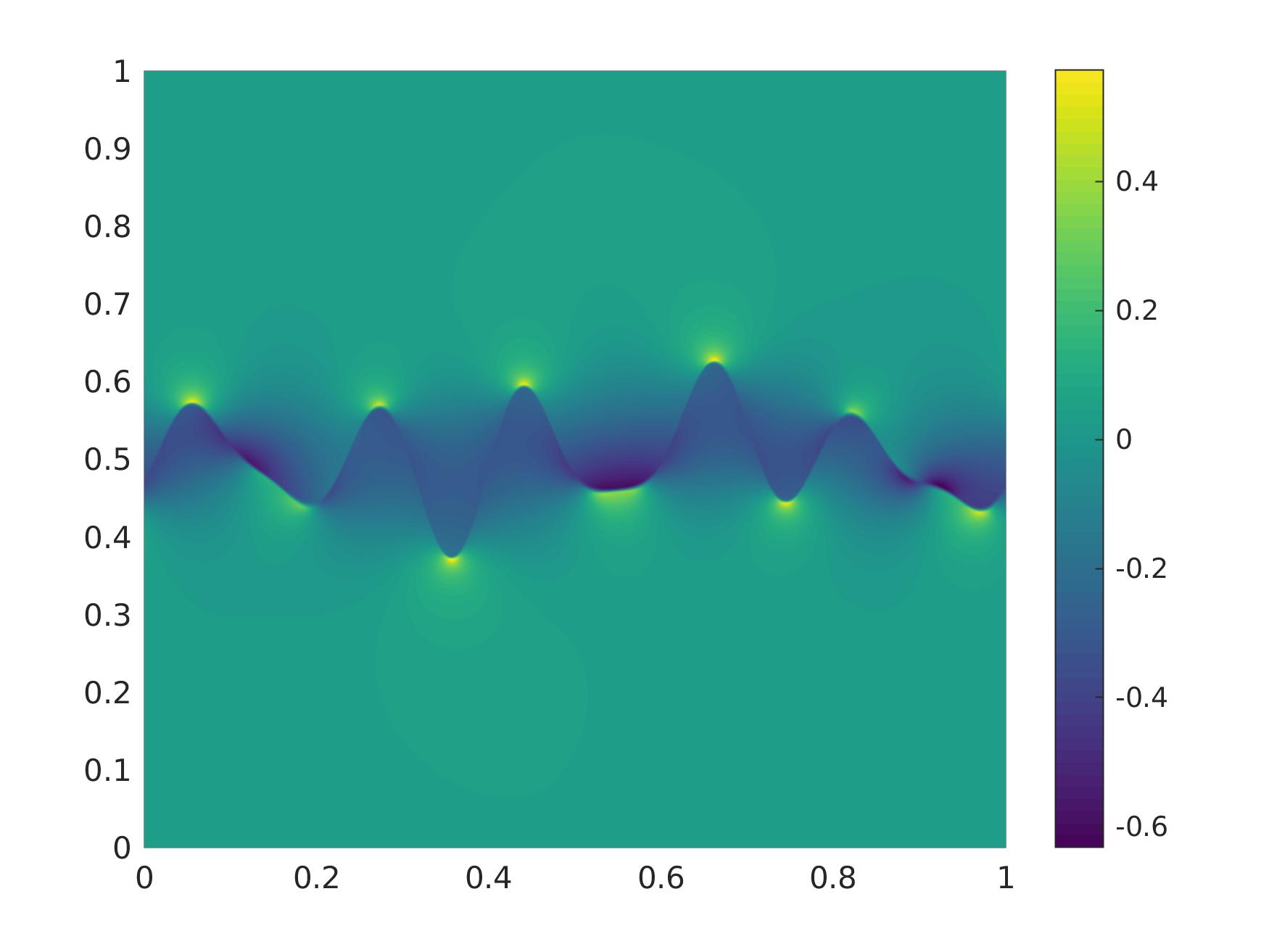}
\caption{sample}
\end{subfigure}
\begin{subfigure}{0.32\textwidth}
\includegraphics[width=\textwidth]{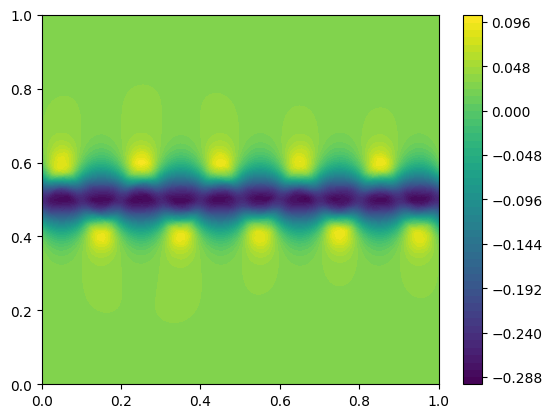}
\caption{mean}
\end{subfigure}
\begin{subfigure}{0.32\textwidth}
\includegraphics[width=\textwidth]{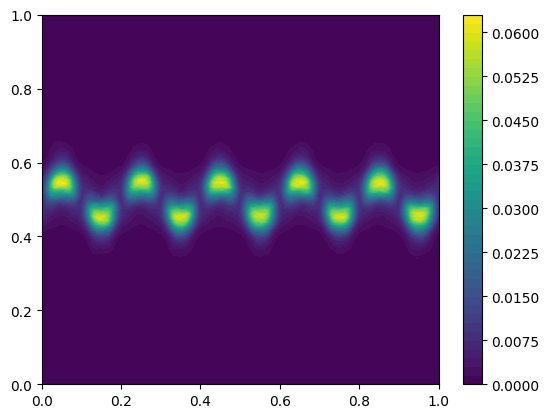}
\caption{variance}
\end{subfigure}

\caption{Perturbed vortex sheet without distinguished sign: Individual sample (A), mean (B) and variance (C) at the initial $t=0.0$, $N=1024$.}

\label{fig:unsignedinitial}
\end{figure}
We start with unperturbed vorticity $\overline{\omega}\in \mathcal{BM}$ a bounded measure, given by 
\[
\overline{\omega}(x) = s(t) \delta(x-\gamma(t)) - \int_{0}^{2\pi} s(t) \, d\gamma(t),
\]
where $\gamma(t) = (t,0.2 \sin(Kt))\in T^2$ defines the curve along which the vorticity is distributed, with $K=10$, and the vortex strength $s(t)$ along $\gamma(t)$ is given by $s(t) = \sin(Kt)$. The numerical approximation $\overline{\omega}_N$ is obtained as the convolution $\overline{\omega}_N(x) := \overline{\omega} \ast \psi_{\rho_N}$, where $\rho_N = \rho/N$, $\rho = 5$, and $\psi_{\rho_N}$ is the B-spline mollifier already considered in section \ref{sec:deterministic}. We let $\overline{u}_N$ denote the corresponding divergence-free velocity field. Finally, we define the perturbed initial data for given $\alpha>0$, by setting 
\[
\overline{u}^\Delta(x;\omega) := 
\mathbb{P}(\overline{u}_N(x_1, x_2 + \sigma_\alpha(x_1;\omega))),
\]
where $x_1 \mapsto \sigma_\alpha(x_1,\omega)$ is the random perturbation already introduced in section \ref{sec:sinusoidal}. We have chosen $\alpha = 0.025$ for our numerical simulation. Again, we let $\overline{\mu}^\Delta \in \P(L^2_x)$ be the law of the random field $\overline{u}^\Delta$. Figure \ref{fig:unsignedinitial} shows the $x$-component of the velocity of a typical individual random sample  $\overline{u}^\Delta$, as well as the mean and variance of this component at the initial time $t = 0.0$. For comparison, the mean and variance at the final time $t=2.0$ are shown in figure \ref{fig:unsignedfinal}. The viscosity parameter in the SV scheme was chosen as $\epsilon_N = \epsilon/N$, for $\epsilon = 0.05$.

\begin{figure}[H]
\center
\begin{subfigure}{0.32\textwidth}
\includegraphics[width=\textwidth]{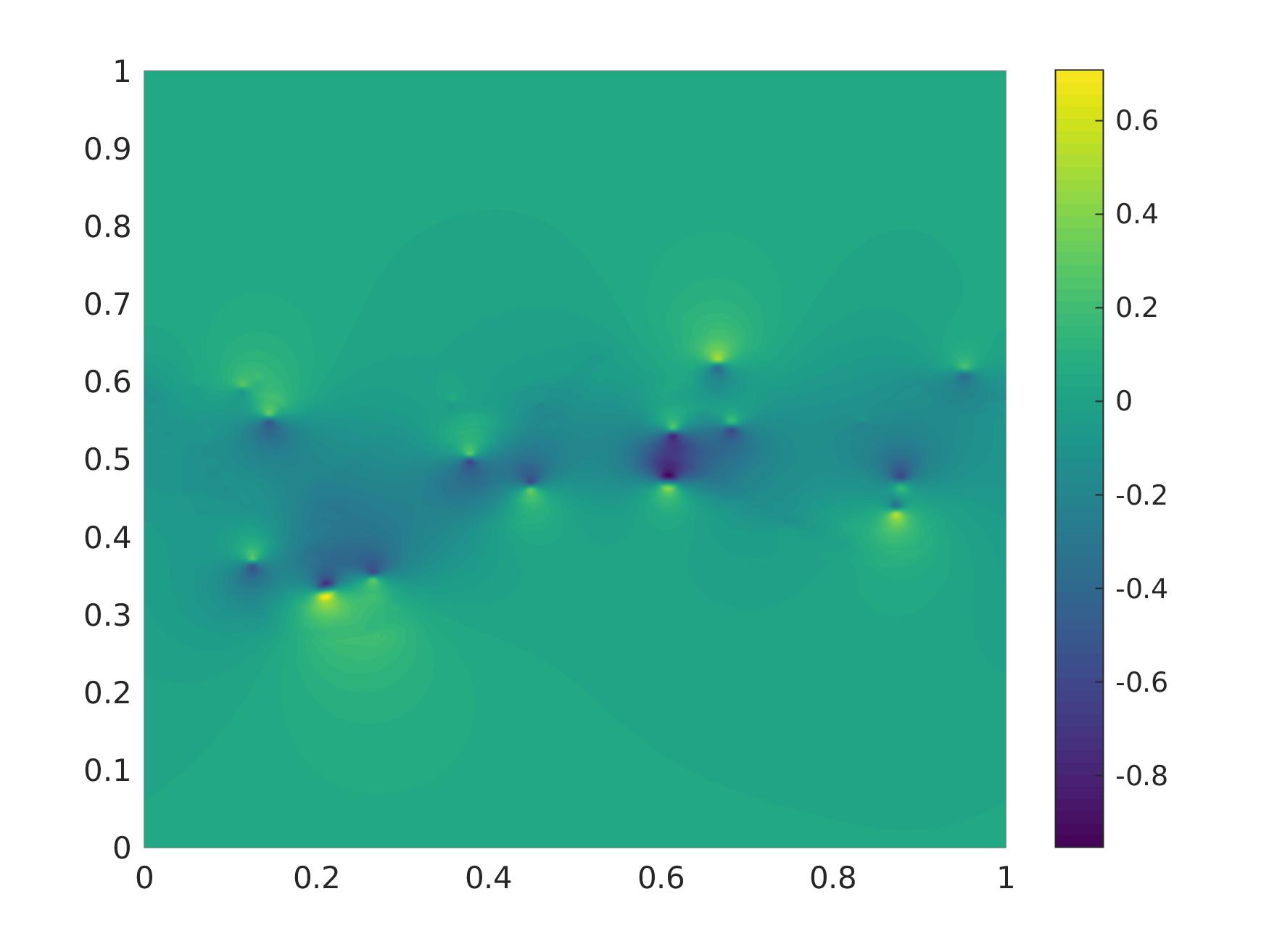}
\caption{sample}
\end{subfigure}
\begin{subfigure}{0.32\textwidth}
\includegraphics[width=\textwidth]{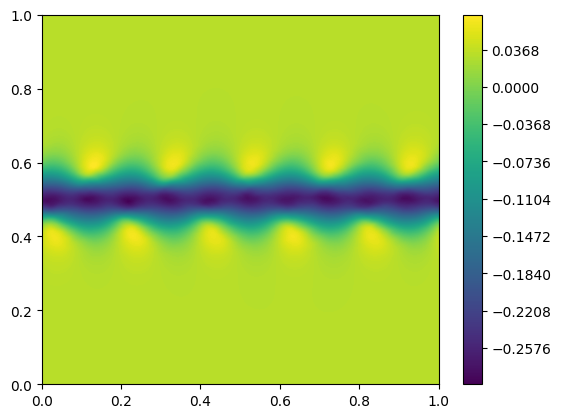}
\caption{mean}
\end{subfigure}
\begin{subfigure}{0.32\textwidth}
\includegraphics[width=\textwidth]{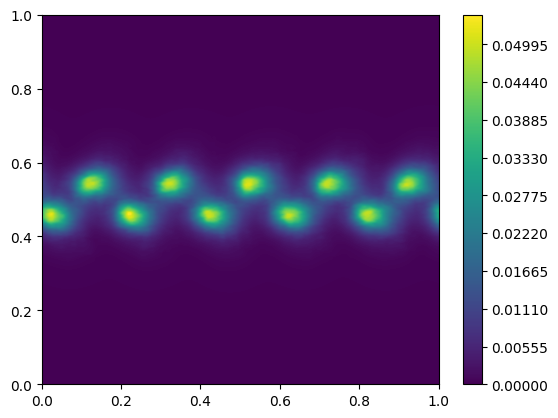}
\caption{variance}
\end{subfigure}

\caption{Perturbed vortex sheet without distinguished sign: Individual sample (A), mean (B) and variance (C) at the final time $t=2.0$, $N=1024$.}

\label{fig:unsignedfinal}

\end{figure}

We start by considering the temporal evolution of the structure functions computed from the approximate statistical solution obtained at various resolutions $N \in \{128,256,512,1024\}$.

\begin{figure}[H]
\begin{subfigure}{.3\textwidth}
\includegraphics[width=\textwidth]{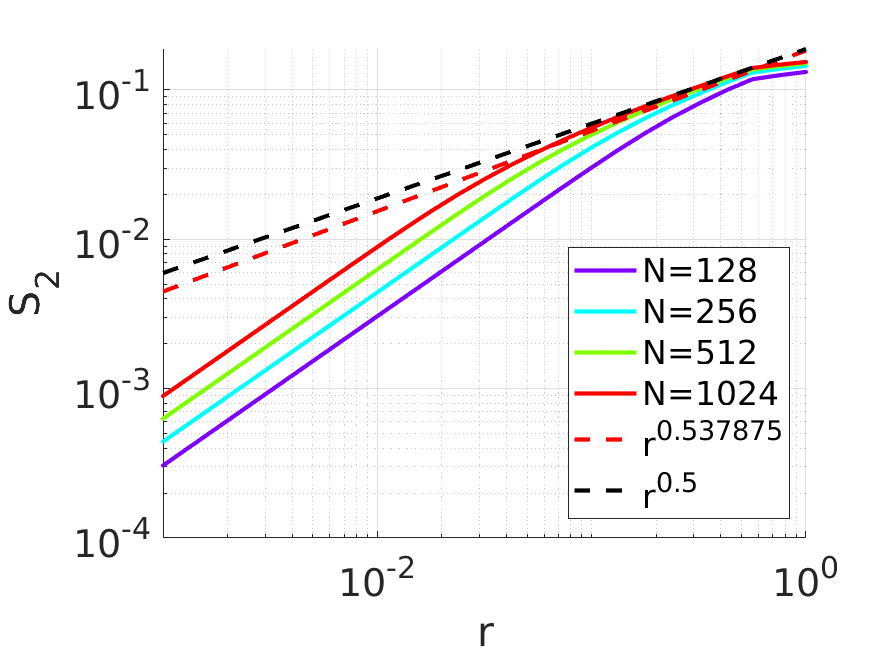}
\caption{$t=0.0$}
\end{subfigure}
\begin{subfigure}{.3\textwidth}
\includegraphics[width=\textwidth]{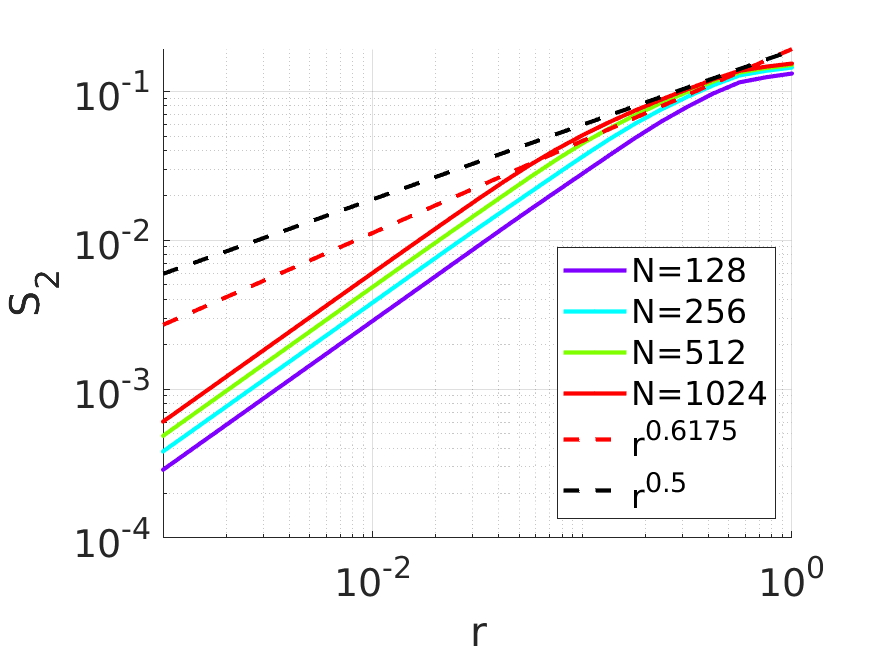}
\caption{$t=0.5$}
\end{subfigure}
\begin{subfigure}{.3\textwidth}
\includegraphics[width=\textwidth]{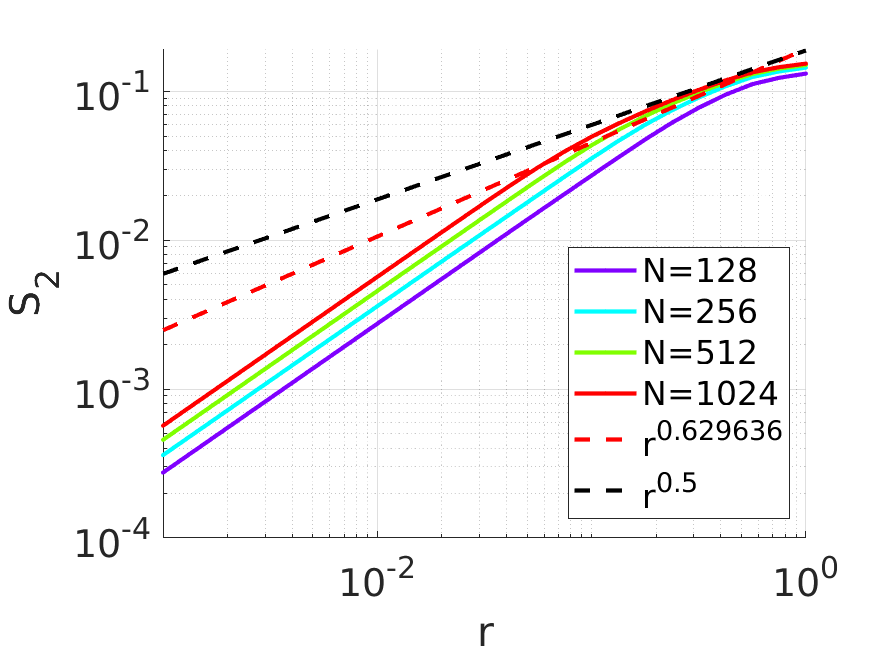}
\caption{$t=1.0$}
\end{subfigure}
\caption{Temporal evolution of structure function for randomly perturbed vortex sheet initial data without distinguished sign, for different resolutions $\Delta = 1/N$. The black dashed line indicates the best upper bound $C^\Delta_{\mathrm{max}} r^\alpha$ computed at $t = 0$, with exponent $\alpha = 1/2$, and at the finest resolution considered, $\Delta = 1/1024$.}
\label{fig:unsigned_sf}
\end{figure}

Perhaps unexpectedly, the structure functions shown in figure \ref{fig:unsigned_sf} exhibit a uniform bound for $t=0,1,2$, also without the sign restriction on the vorticity; similar to the bound on the structure function observed for the distinguished vortex sheet case in section \ref{sec:sinusoidal}. Again, the bound on the structure function indicates that $S_2(\mu^\Delta_t;r) \le Cr^{1/2}$, for some constant $C>0$.

We next consider the evolution of the compensated energy spectra $K \mapsto K^\lambda E(\mu^\Delta_t;K)$ with exponent $\lambda =2$ (which corresponds to the exponent $\alpha=1/2$ of the structure function), in figure \ref{fig:unsigned_cs}.

\begin{figure}[H]
\begin{subfigure}{.3\textwidth}
\includegraphics[width=\textwidth]{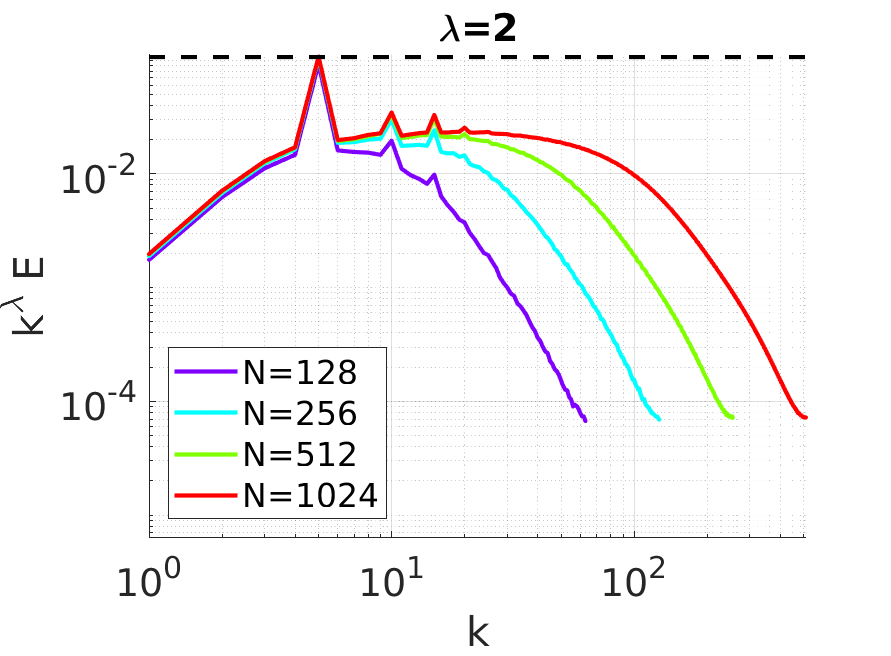}
\caption{$t=0.0$}
\end{subfigure}
\begin{subfigure}{.3\textwidth}
\includegraphics[width=\textwidth]{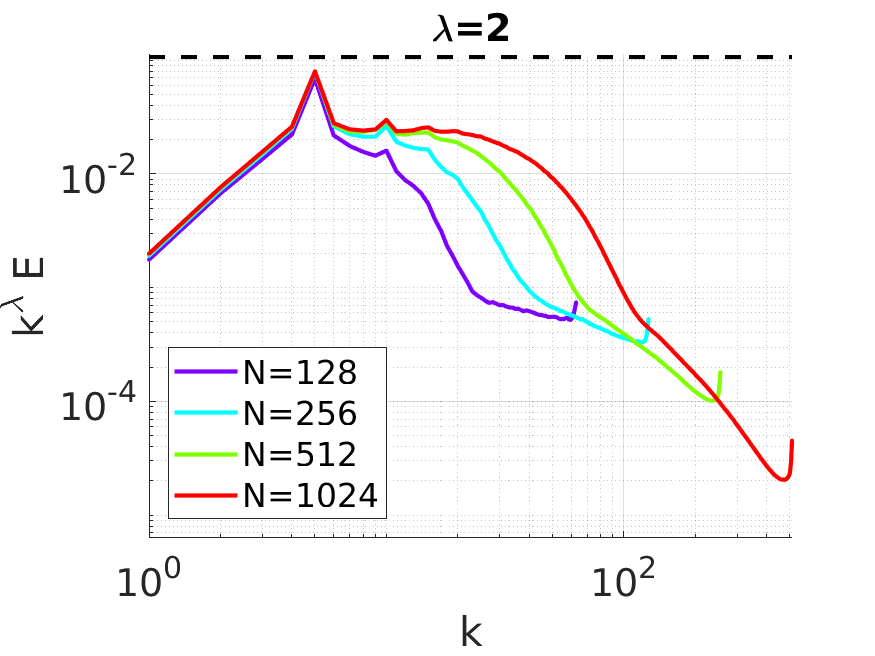}
\caption{$t=1.0$}
\end{subfigure}
\begin{subfigure}{.3\textwidth}
\includegraphics[width=\textwidth]{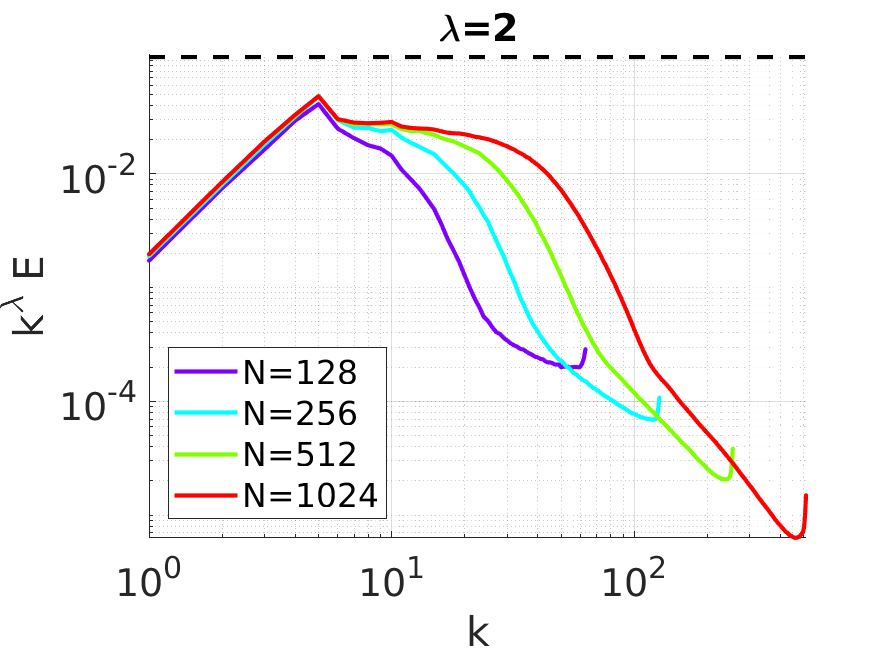}
\caption{$t=2.0$}
\end{subfigure}
\caption{Temporal evolution of compensated energy spectra $K^\lambda E(K)$ for randomly perturbed sinusoidal vortex sheet initial data, with $\lambda = 2$.}
\label{fig:unsigned_cs}
\end{figure}

The compensated energy spectra confirm the observed uniform bound on the structure function, indicating that $E(\mu^\Delta_t;K) \le DK^{-2}$, for some constant $D>0$. To analyse this qualitative observation at a more quantitative level, we track the best-upper-bounds $C^\Delta_{\mathrm{max}}$ \eqref{eq:Cmax} and $D^\Delta_{\mathrm{max}}$ \eqref{eq:Dmax} in figure \ref{fig:unsigned_Cmax}. 

\begin{figure}[H]
\begin{subfigure}{.45\textwidth}
\includegraphics[width=\textwidth]{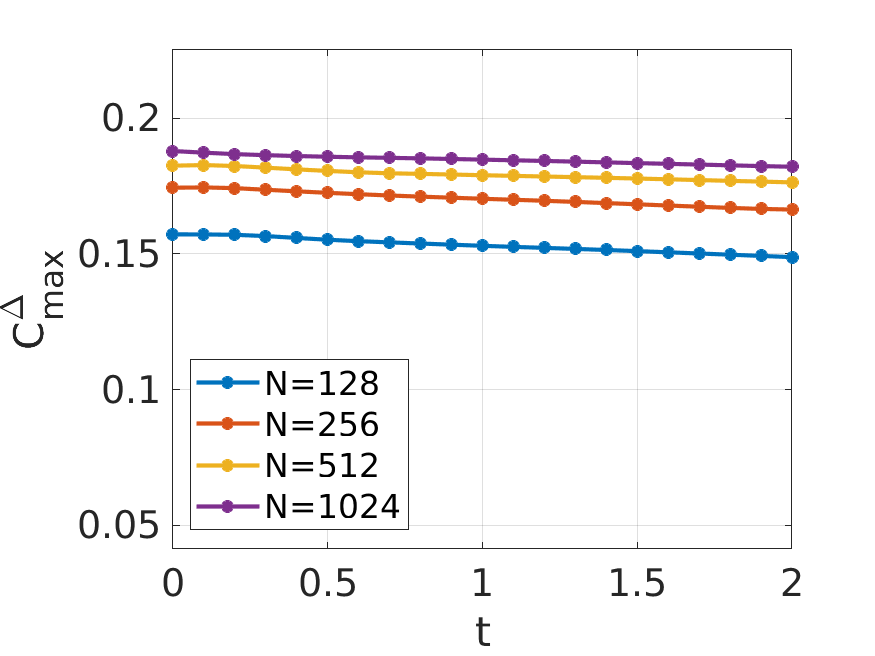}
\caption{$C^\Delta_\mathrm{max}(\alpha=1/2;t)$}
\end{subfigure}
\begin{subfigure}{.45\textwidth}
\includegraphics[width=\textwidth]{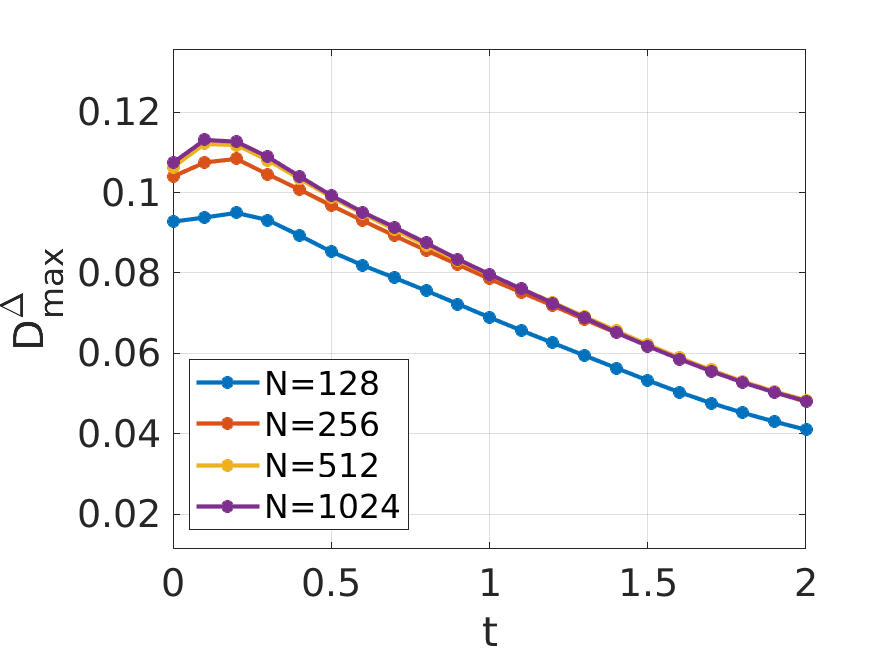}
\caption{$D^\Delta_\mathrm{max}(\lambda=2;t)$}
\end{subfigure}

\caption{Temporal evolution of $C^\Delta_\mathrm{max}$ (eq. \eqref{eq:Cmax}) and $D^\Delta_\mathrm{max}$ (eq. \eqref{eq:Dmax}) for randomly perturbed vortex sheet without distinguished sign.}
\label{fig:unsigned_Cmax}
\end{figure}

Figure \ref{fig:unsigned_Cmax} clearly indicates that the structure function remains uniformly bounded over time and with respect to resolution also for this signed vortex sheet case.

Finally, we consider the evolution of the numerically obtained energy dissipation, directly. Again, we consider the temporal evolution of the quantity 
\[
\frac{\Delta E}{E} = \frac{E^\Delta(t) - \overline{E}_0}{\overline{E}_0},
\]
where $E^\Delta(t) = \int_{L^2_x} \Vert u \Vert_{L^2_x}^2 \, d\mu^\Delta_t(u)$. The estimate for the Monte-Carlo error of this quantity is indicated by the shaded regions. The reference value $\overline{E}_0$ has been determined by second-order Richardson-extrapolation of the given data $E^\Delta(0)$ for $\Delta \in \{1/1024,1/512,1/256\}$ to $\Delta=0$.

\begin{figure}[H]
\begin{subfigure}{0.45\textwidth}
\includegraphics[width=\textwidth]{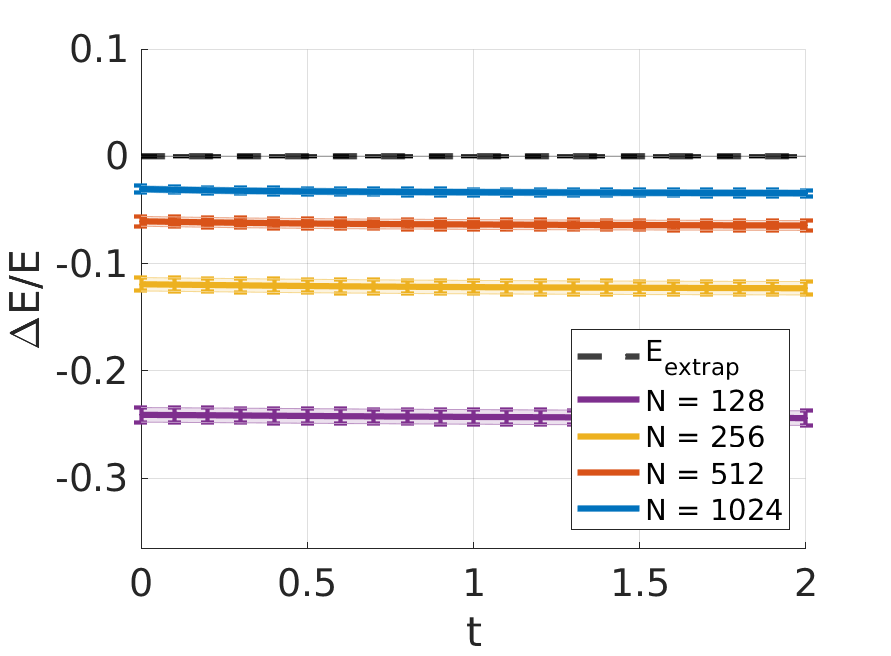}
\caption{rel. energy dissipation vs $t$}
\end{subfigure}
\begin{subfigure}{0.45\textwidth}
\includegraphics[width=\textwidth]{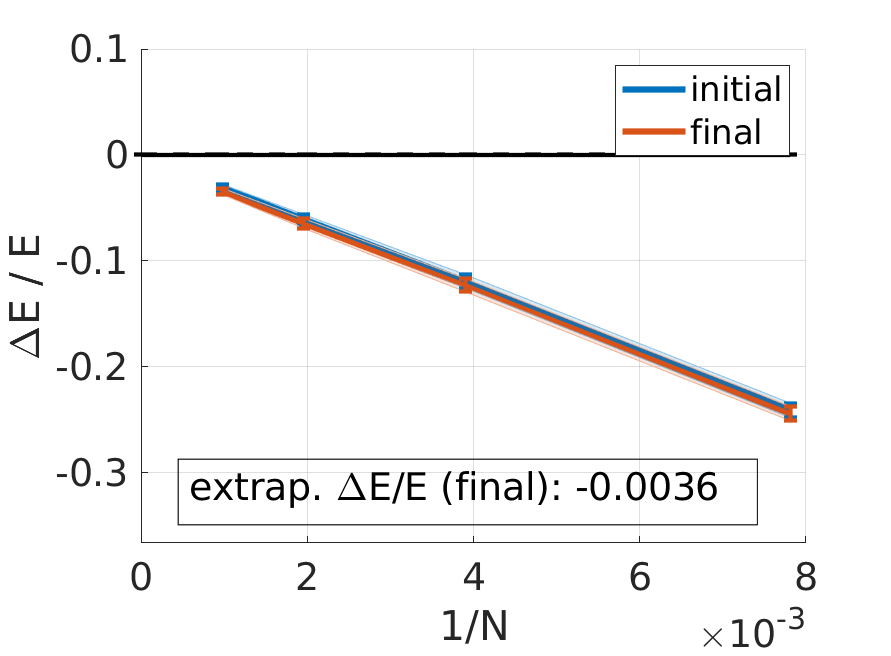}
\caption{rel. E'diss. vs $\Delta = 1/N$}
\end{subfigure}
\caption{Vortex sheet without distinguished sign: Relative energy dissipation as a function of $t$ (left), and as a function of $\Delta = 1/N$ at the final time $t=1$ (right).}
\label{fig:unsigned_Erel}
\end{figure}

Unexpectedly, also for this inital data, where the individual random realisations of the initial data have vorticity $\overline{\omega}^\Delta \in \mathcal{BM}$ , i.e. a bounded measure, \emph{without a distinguished sign}, our numerical experiments indicate that the energy dissipation converges to zero as $\Delta \to 0$ (at least over the time interval $[0,T]$ with $T=2$ considered), implying that the limiting statistical solution is energy conservative, and confirming our observed bounds on the structure function.

\subsection{Brownian motion} \label{sec:BM}

Our final numerical experiments consider Brownian-motion-like initial data, that depend on a parameter $0<H<1$ (the ``Hurst index''), which can be chosen freely, and such that the initial energy spectra exhibit an exact scaling $E(\overline{\mu};K) \sim K^{-2H-1}$, implying
\[
S_2(\overline{\mu};r) \le Cr^H. 
\]
We are interested in whether energy is conserved in the limit for numerical approximations, generated by MC + SV algorithm. 

\begin{figure}[H]
\begin{subfigure}{0.32\textwidth}
\includegraphics[width=\textwidth]{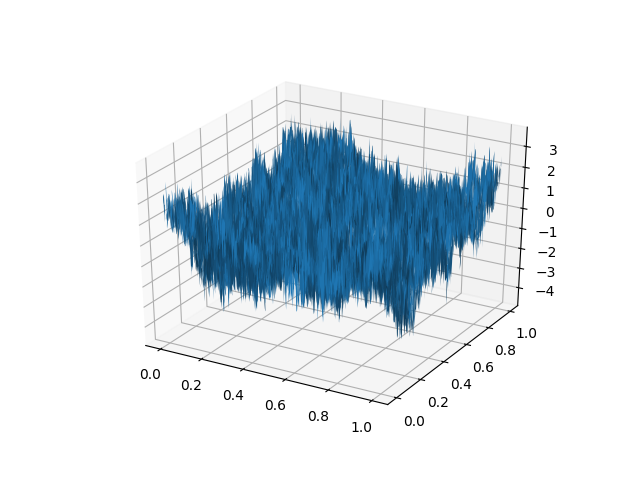}
\caption{$H=0.15$}
\end{subfigure}
\begin{subfigure}{0.32\textwidth}
\includegraphics[width=\textwidth]{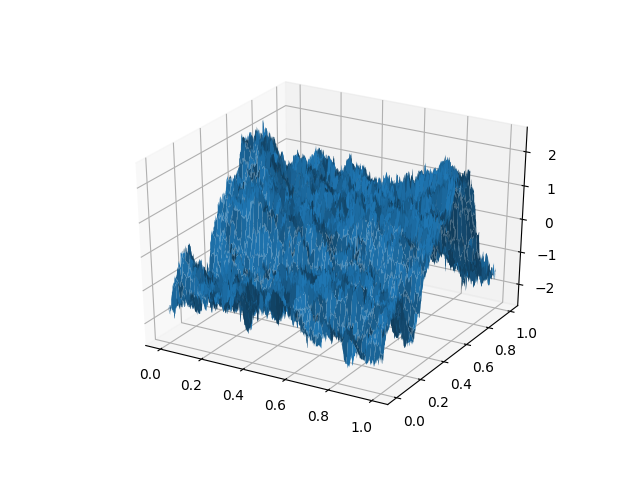}
\caption{$H=0.5$}
\end{subfigure}
\begin{subfigure}{0.32\textwidth}
\includegraphics[width=\textwidth]{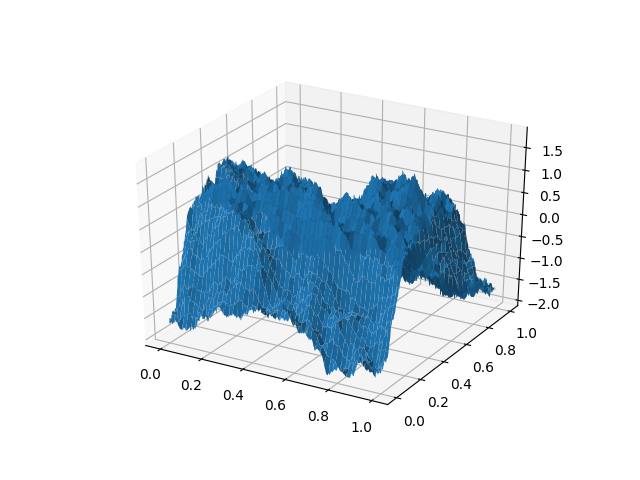}
\caption{$H=0.75$}
\end{subfigure}
\caption{Brownian motion initial data: individual samples of $x$-component of velocity for the three different Hurst indices $H = 0.15$ (A), $H=0.5$ (B), $H=0.75$ (C), considered in this work.}
\label{fig:BMinitial}
\end{figure}

To construct such initial data, for a given mode number $N$, we generate i.i.d., uniformly distributed random variables $\alpha^{(cc)}_k$, $\alpha^{(cs)}_k$, $\alpha^{(sc)}_k$, $\alpha^{(ss)}_k \in (-1,1)$, (collectively denoted by $\omega$), and we define a random initial field $W(x;\omega)$ by 
\begin{gather}
\begin{aligned}
W(x;\omega)
&=
\sum_{|k|_\infty \le N}
\frac{1}{\sqrt{k_1^2 + k_2^2}^{(H+1)}}\Big\{
\alpha^{(cc)}_k \mathrm{cc}_k(x)
+\alpha^{(cs)}_k \mathrm{cs}_k(x)
\\[-5pt]
&\hspace{4cm} 
+\alpha^{(sc)}_k \mathrm{sc}_k(x)
+\alpha^{(ss)}_k \mathrm{ss}_k(x)
\Big\},
\end{aligned}
\end{gather}
where we denote
\begin{align*}
\mathrm{cc}_k(x) &:= \cos(2\pi k_1 x_1) \cos(2\pi k_2 x_2), 
\quad
\mathrm{cs}_k(x) := \cos(2\pi k_1 x_1) \sin(2\pi k_2 x_2), \\
\mathrm{sc}_k(x) &:= \sin(2\pi k_1 x_1) \cos(2\pi k_2 x_2), 
\quad
\mathrm{ss}_k(x) := \sin(2\pi k_1 x_1) \sin(2\pi k_2 x_2).
\end{align*}
Then, $W(x;\omega)$ has the desired tuneable decay of the Fourier spectrum $E(K) \sim K^{-(2H+1)}$, implying the upper bound $S_2(r) \lesssim r^H$.

For two i.i.d. realisations $\omega_1, \omega_2$ of such coefficients $\alpha_k^{(cc)}$, $\alpha_k^{(cs)}$, $\alpha_k^{(sc)}$, $\alpha_k^{(ss)}$, we define a random field by
\[
\overline{u}_N(x;\omega) := \mathbb{P}(W(x;\omega_1), W(x;\omega_2)),
\]
where once again, $\mathbb{P}$ refers to the Leray projection onto divergence-free vector fields. We define $\overline{\mu}\in \P(L^2_x)$ as the law of the random field $\overline{u}_N$.

We will consider three choices of the Hurst index $H$ in the following: $H=0.15$, $H=0.5$ and $H=0.75$ (cp. figure \ref{fig:BMinitial}). We note that there are no known existence results for such rough ``Brownian motion`` initial data, but the structure function is nevertheless well-defined and bounded from above initially, so that such initial data falls within the class considered in our results, concerning the energy conservation of numerically approximated statistical solutions.

\begin{figure}[H]
\begin{subfigure}{.3\textwidth}
\includegraphics[width=\textwidth]{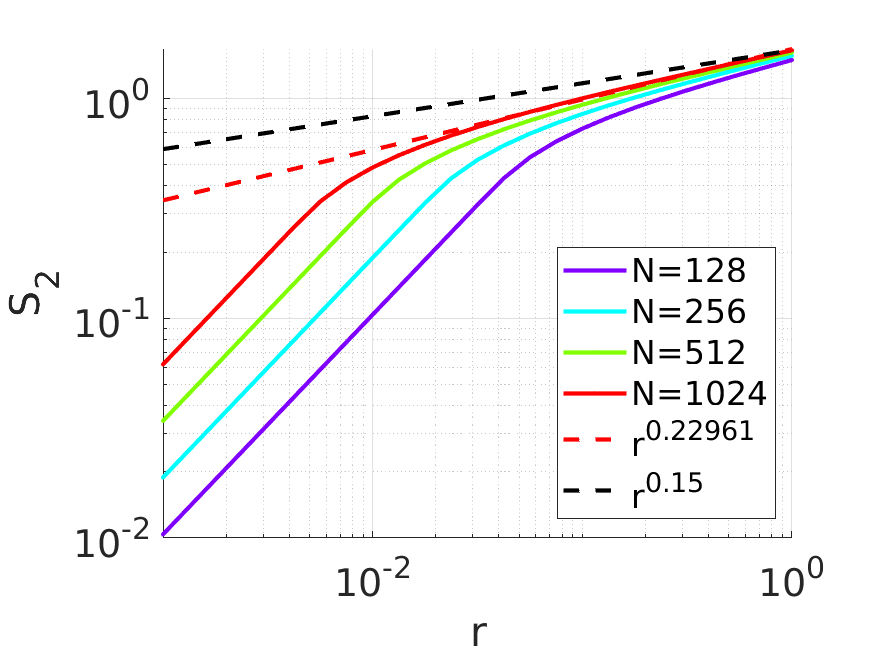}
\caption{$H=0.15$}
\end{subfigure}
\begin{subfigure}{.3\textwidth}
\includegraphics[width=\textwidth]{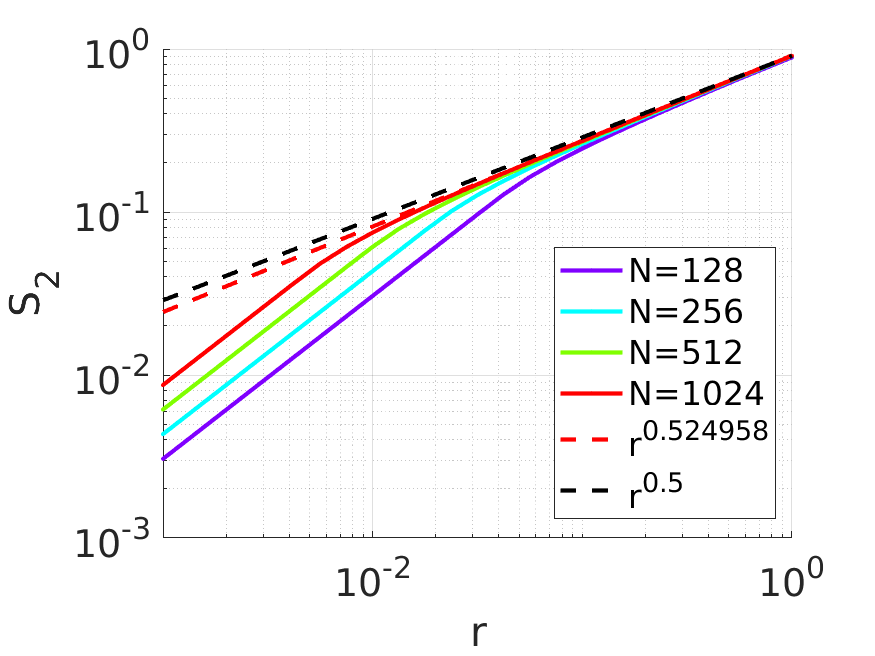}
\caption{$H=0.5$}
\end{subfigure}
\begin{subfigure}{.3\textwidth}
\includegraphics[width=\textwidth]{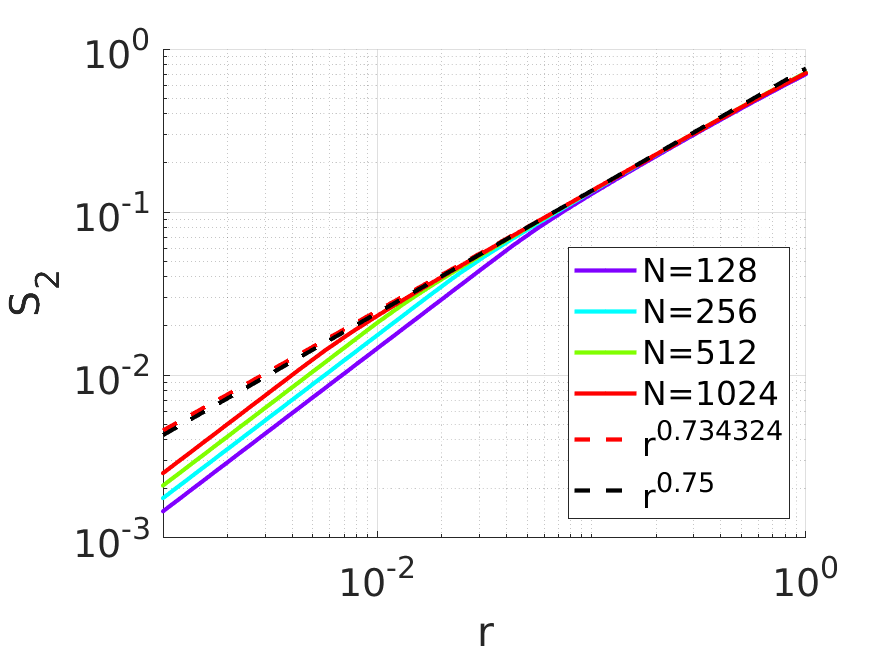}
\caption{$H=0.75$}
\end{subfigure}
\caption{Structure function for Brownian motion initial data at initial time $t=0$, for different hurst indices $H$ and resolutions $\Delta = 1/N$. The black dashed line indicates the best upper bound $C^\Delta_{\mathrm{max}} r^\alpha$ computed at $t = 0$, with exponent $\alpha = H$, and at the finest resolution considered, $\Delta = 1/1024$.}
\label{fig:BM_sf0}
\end{figure}

\begin{figure}[H]
\begin{subfigure}{.3\textwidth}
\includegraphics[width=\textwidth]{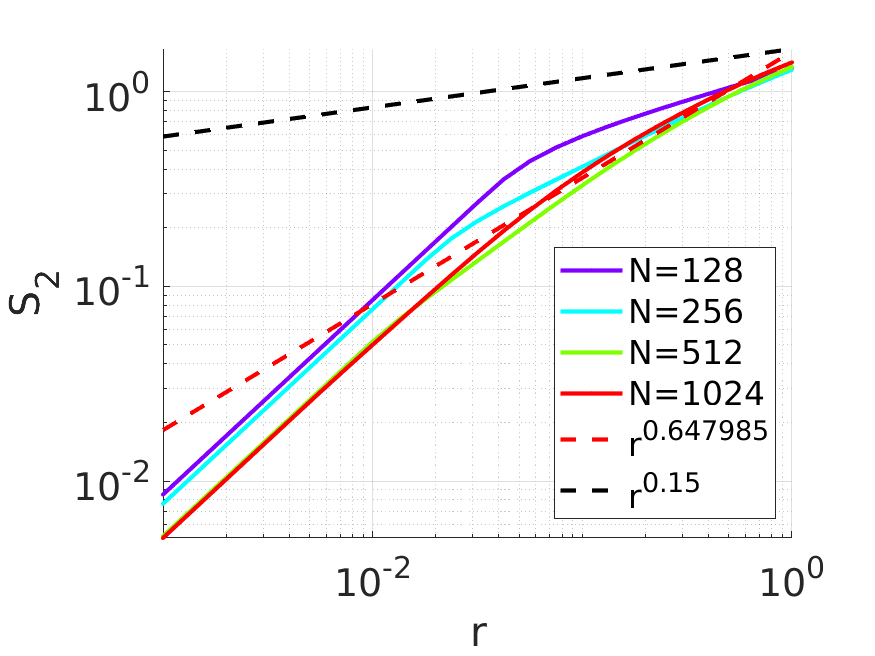}
\caption{$H=0.15$}
\end{subfigure}
\begin{subfigure}{.3\textwidth}
\includegraphics[width=\textwidth]{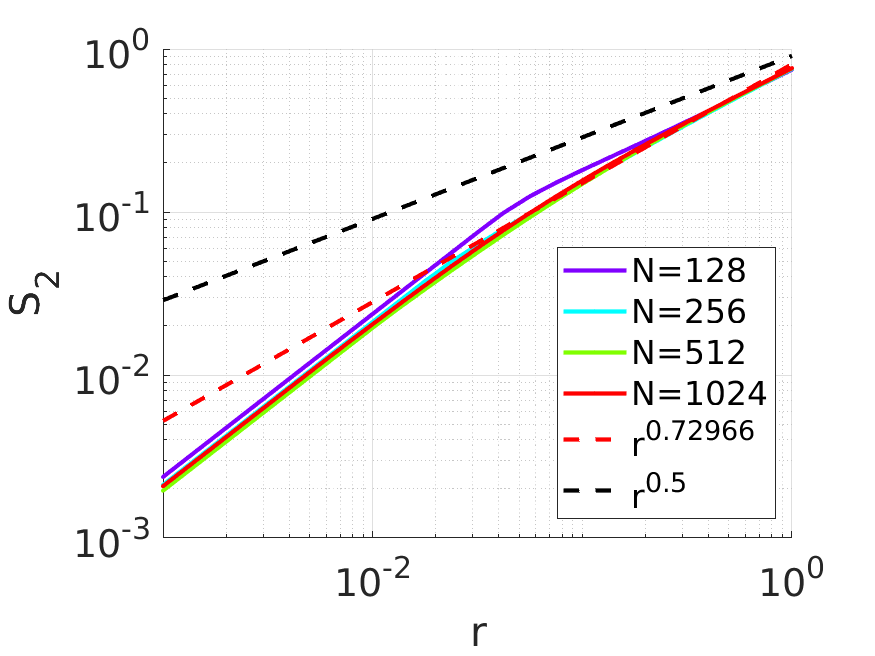}
\caption{$H=0.5$}
\end{subfigure}
\begin{subfigure}{.3\textwidth}
\includegraphics[width=\textwidth]{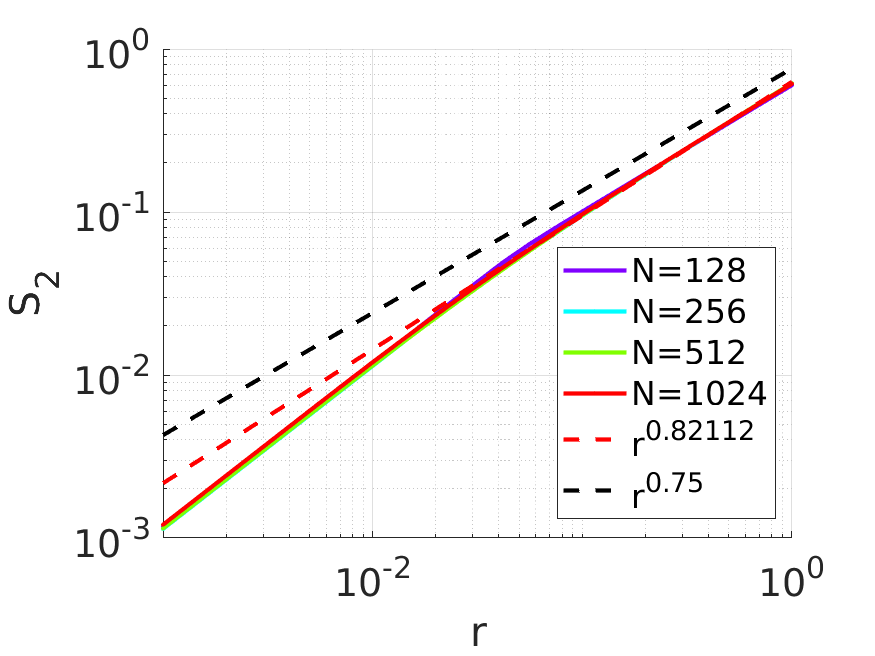}
\caption{$H=0.75$}
\end{subfigure}

\caption{Structure function for Brownian motion initial data at final time $T=1$, for different hurst indices $H$ and resolutions $\Delta = 1/N$. The black dashed line indicates the best upper bound $C^\Delta_{\mathrm{max}} r^\alpha$ computed at $t = 0$, with exponent $\alpha = H$, and at the finest resolution considered, $\Delta = 1/1024$.}
\label{fig:BM_sf1}
\end{figure}

\subsubsection{Structure function decay}
As for the previous cases, we first consider the temporal evolution of the structure functions computed from the approximate statistical solution obtained at various resolutions $N \in \{128,256,512,1024\}$. The structure function of our numerical approximate statistical solutions at the initial time $t=0$ are shown in figure \ref{fig:BM_sf0}, for the different Hurst indices $H$ considered.

We can clearly observe the expected upper bound on the scaling of the structure function $S_2(\mu^\Delta_t;r) \le Cr^H$ (cp. \eqref{eq:compespec}), at the initial time $t=0$, in figure \ref{fig:BM_sf0}. Again, the structure functions are observed to be well-behaved along their time-evolution. The structure functions at the final time $t=T$ are shown in \ref{fig:BM_sf1}.

Figure \ref{fig:BM_sf1} indicates that the the structure functions remain uniformly bounded alos at later times, and in fact considerable smoothing is observed in this case. 

Next, we consider the evolution of the compensated energy spectra $K\mapsto K^\gamma E(\mu^\Delta_t;K)$, where the compensating exponent $\lambda$ is chosen as $\lambda = 2H+1$. Figure \ref{fig:BM_cs0} shows the compensated energy spectra at time $t=0$.

\begin{figure}[H]
\begin{subfigure}{.3\textwidth}
\includegraphics[width=\textwidth]{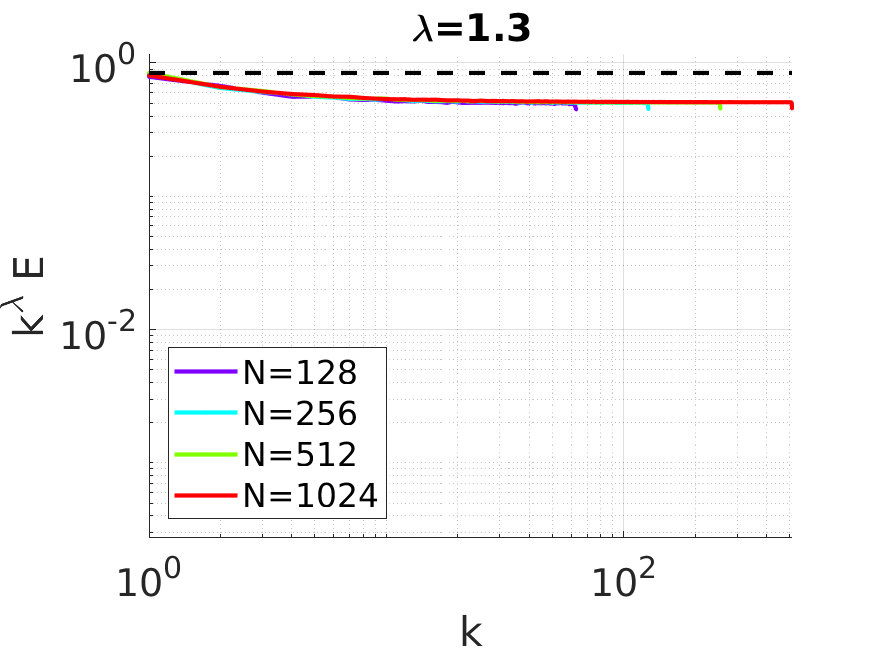}
\caption{$H=0.15$}
\end{subfigure}
\begin{subfigure}{.3\textwidth}
\includegraphics[width=\textwidth]{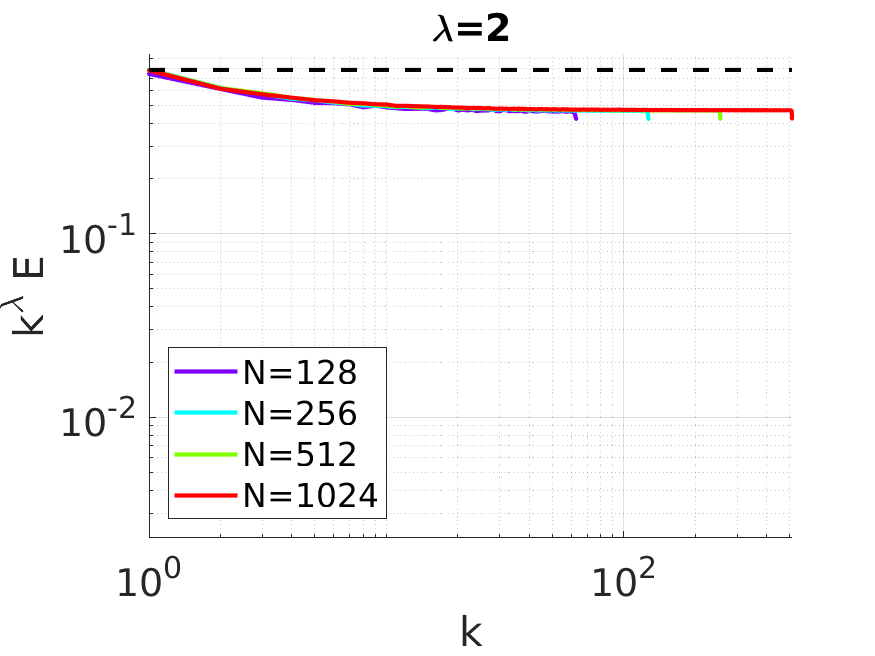}
\caption{$H=0.5$}
\end{subfigure}
\begin{subfigure}{.3\textwidth}
\includegraphics[width=\textwidth]{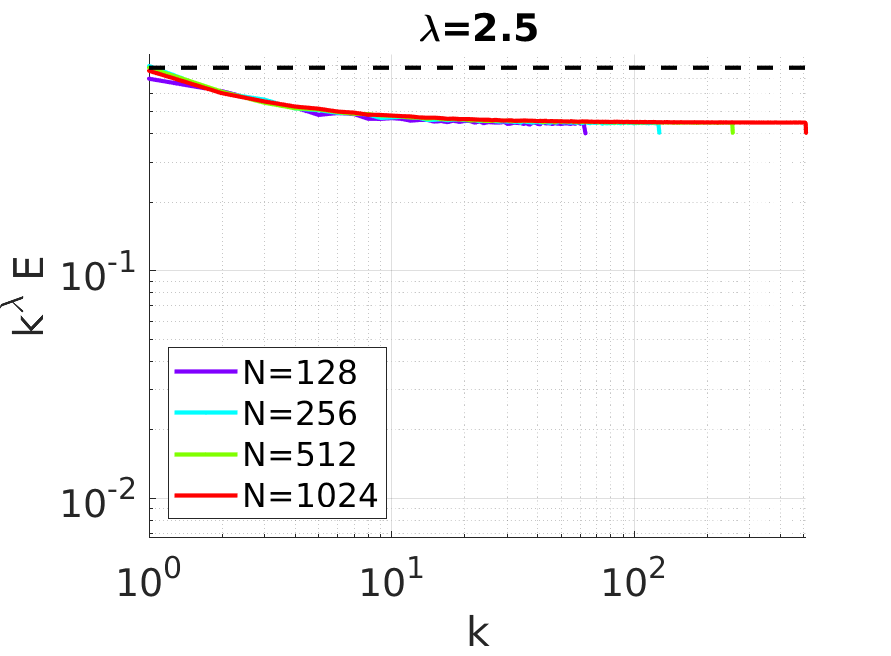}
\caption{$H=0.75$}
\end{subfigure}
\caption{Compensated energy spectra $K^\lambda E(K)$ for Brownian motion initial data at initial time $t=0$, for different Hurst indices $H$ and resolutions $\Delta = 1/N$. The black dashed line indicates the best upper bound $D^\Delta_{\mathrm{max}}$ computed at $t = 0$, with exponent $\alpha = H$, and at the finest resolution considered, $\Delta = 1/1024$.}
\label{fig:BM_cs0}
\end{figure}

\begin{figure}[H]
\begin{subfigure}{.3\textwidth}
\includegraphics[width=\textwidth]{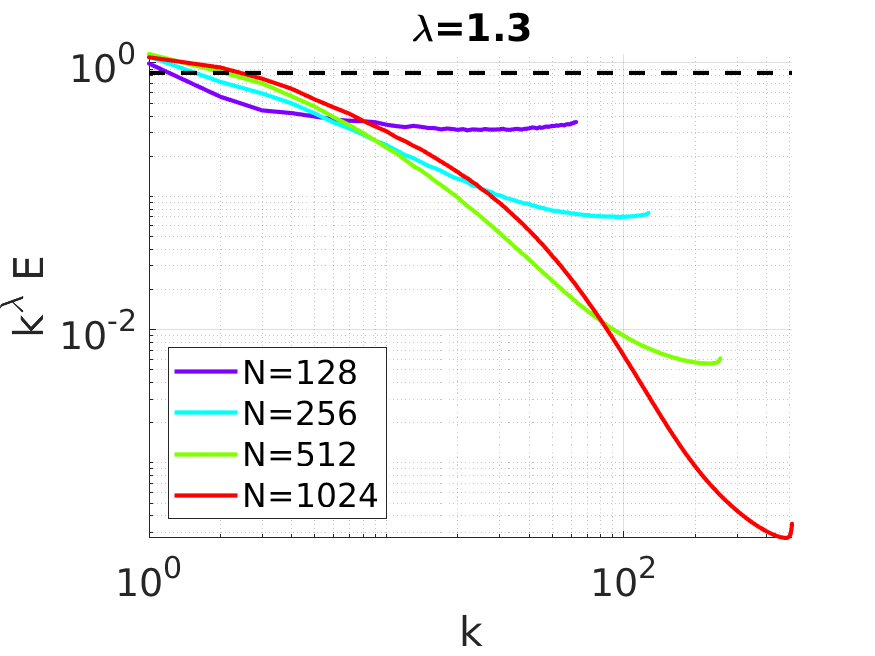}
\caption{$H=0.15$}
\end{subfigure}
\begin{subfigure}{.3\textwidth}
\includegraphics[width=\textwidth]{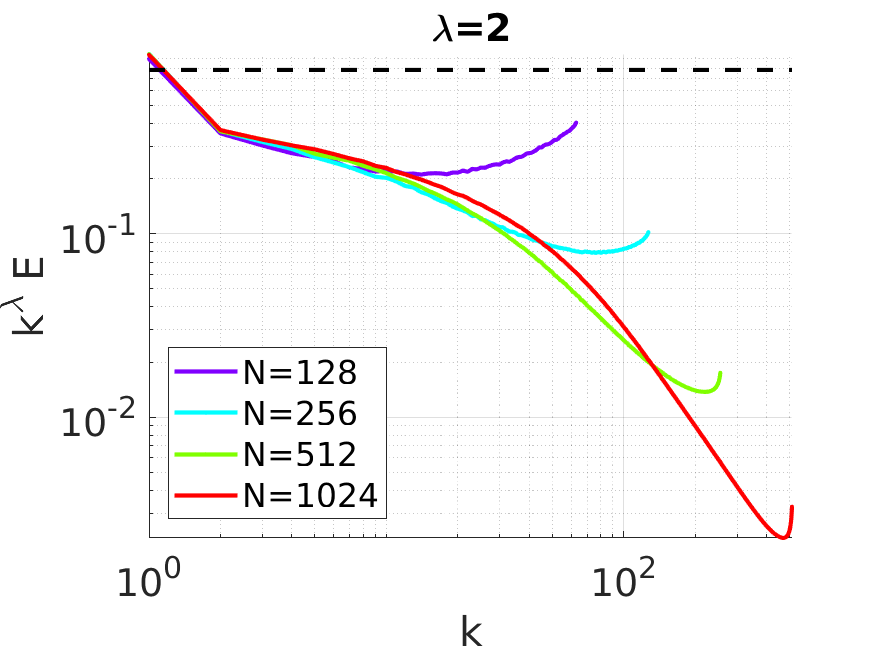}
\caption{$H=0.5$}
\end{subfigure}
\begin{subfigure}{.3\textwidth}
\includegraphics[width=\textwidth]{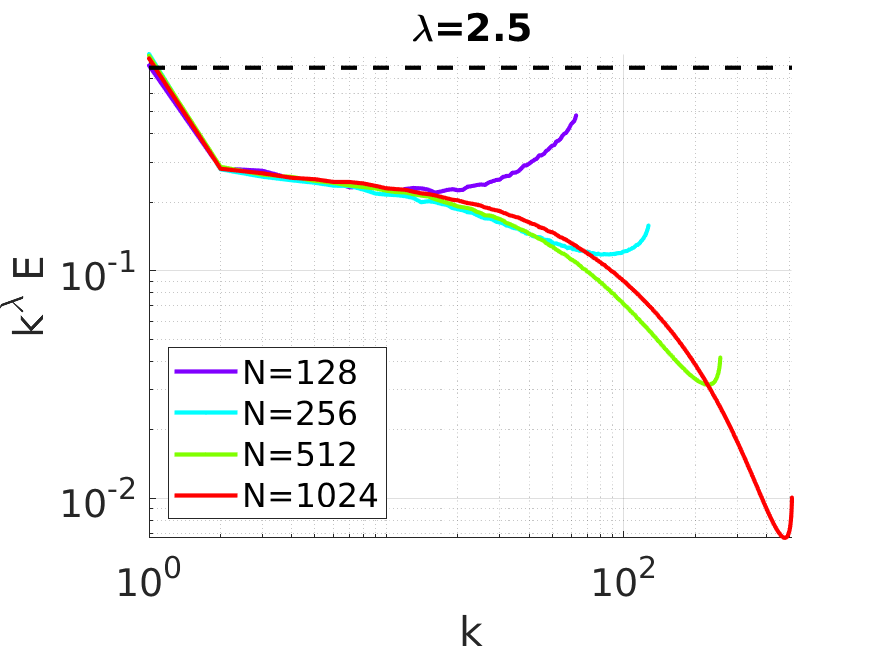}
\caption{$H=0.75$}
\end{subfigure}
\caption{Compensated energy spectra $K^\lambda E(K)$ for Brownian motion initial data at the final time $t=1$, for different Hurst indices $H$ and resolutions $\Delta = 1/N$. The black dashed line indicates the best upper bound $D^\Delta_{\mathrm{max}}$ computed at $t = 0$, with exponent $\lambda = 2H+1$, and at the finest resolution considered, $\Delta = 1/1024$.}
\label{fig:BM_cs1}
\end{figure}

As is seen from figure \ref{fig:BM_cs0}, the initial energy exhibit the expected scaling $E(\mu^\Delta_t;K) \sim K^{-(2H+1)}$ in all three cases. It is found that the compensated energy spectra remain bounded up to the final time $t=1$, as shown in figure \ref{fig:BM_cs1}.

A quantitative representation of the behaviour of the structure functions and energy spectra over time is again obtained by tracking the constants $C^\Delta_{\mathrm{max}}(\alpha;t)$ \eqref{eq:Cmax} for the structure function, and $D^\Delta_{\mathrm{max}}(\lambda;t)$ for the energy spectra. Again, we choose $\alpha = H$, and $\lambda = 2H+1$. The results are shown in figure \ref{fig:BM_Cmax}.

\begin{figure}[H]
\begin{subfigure}{.45\textwidth}
\includegraphics[width=\textwidth]{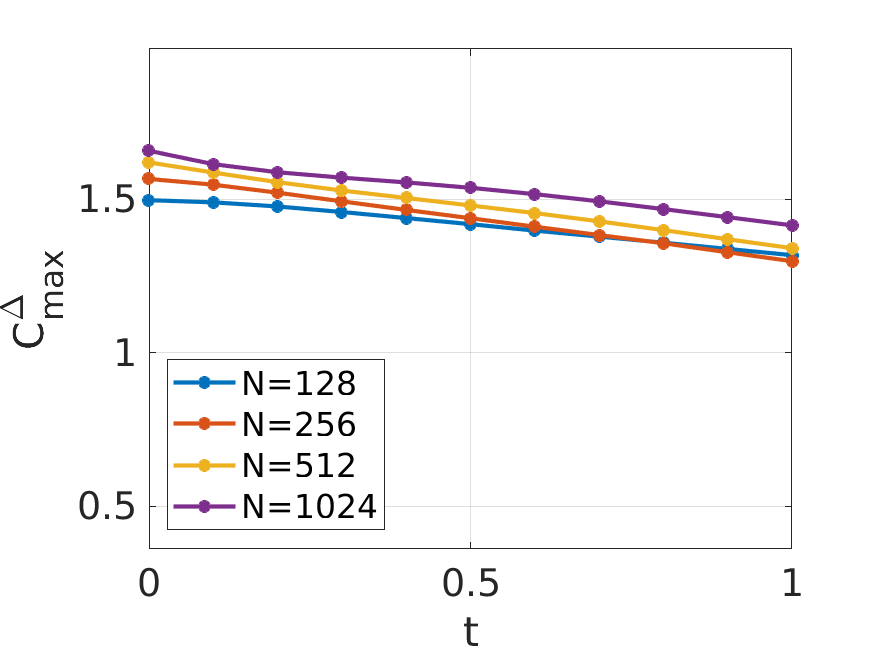}
\caption{$C^\Delta_\mathrm{max}(\alpha=H;t)$}
\end{subfigure}
\begin{subfigure}{.45\textwidth}
\includegraphics[width=\textwidth]{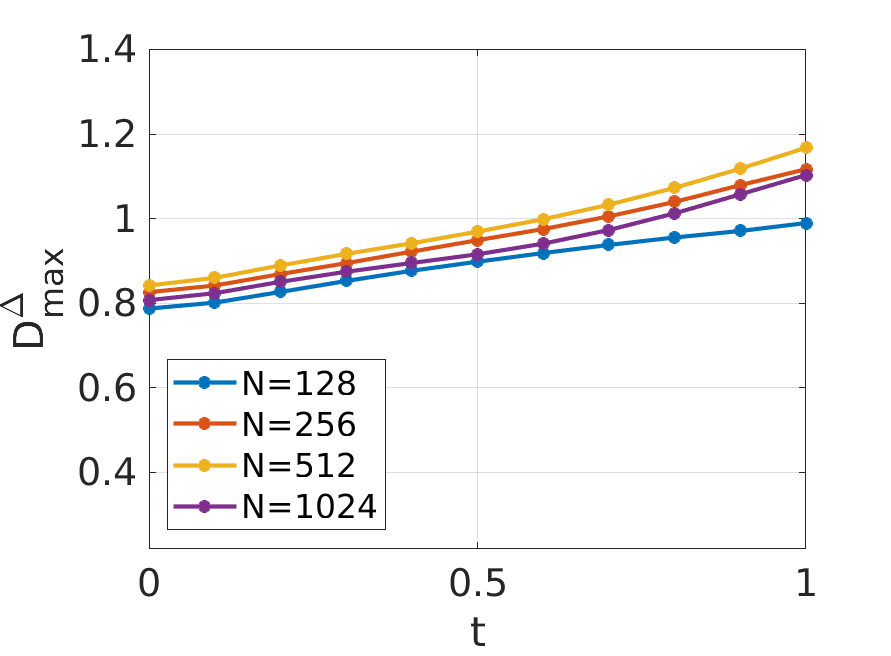}
\caption{$D^\Delta_\mathrm{max}(\lambda=2H+1;t)$}
\end{subfigure}

\begin{subfigure}{.45\textwidth}
\includegraphics[width=\textwidth]{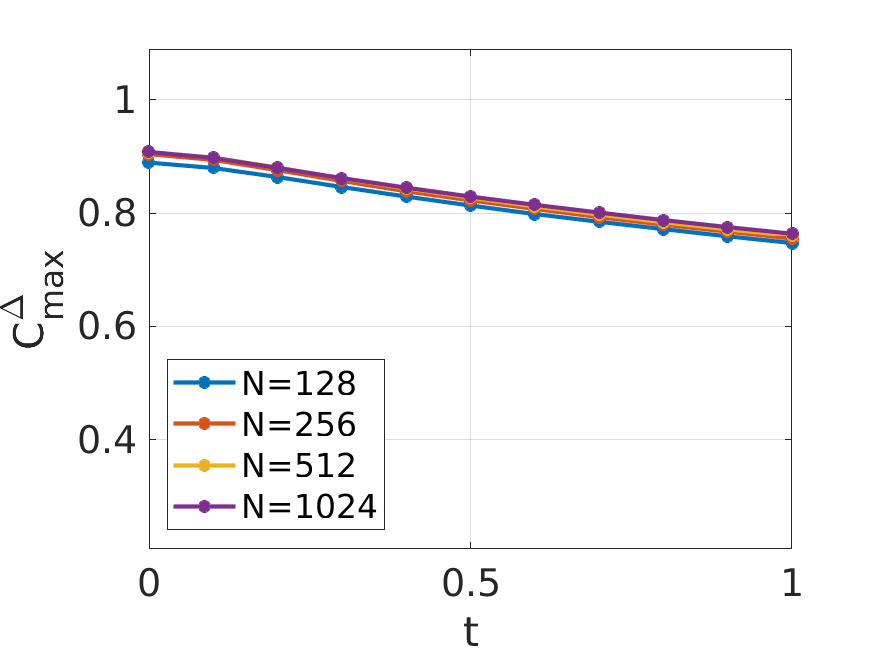}
\caption{$C^\Delta_\mathrm{max}(\alpha=H;t)$}
\end{subfigure}
\begin{subfigure}{.45\textwidth}
\includegraphics[width=\textwidth]{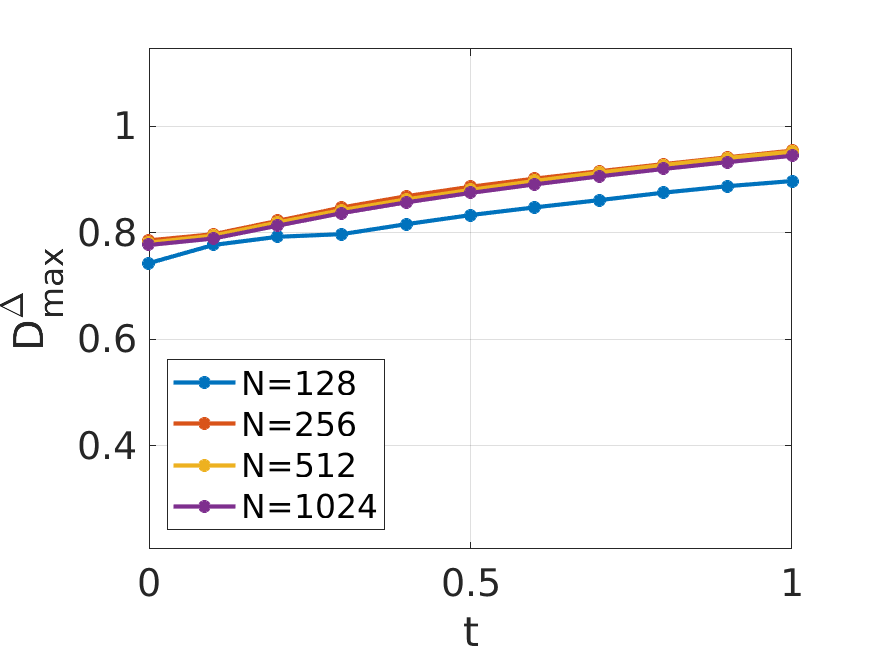}
\caption{$D^\Delta_\mathrm{max}(\lambda=2H+1;t)$}
\end{subfigure}

\begin{subfigure}{.45\textwidth}
\includegraphics[width=\textwidth]{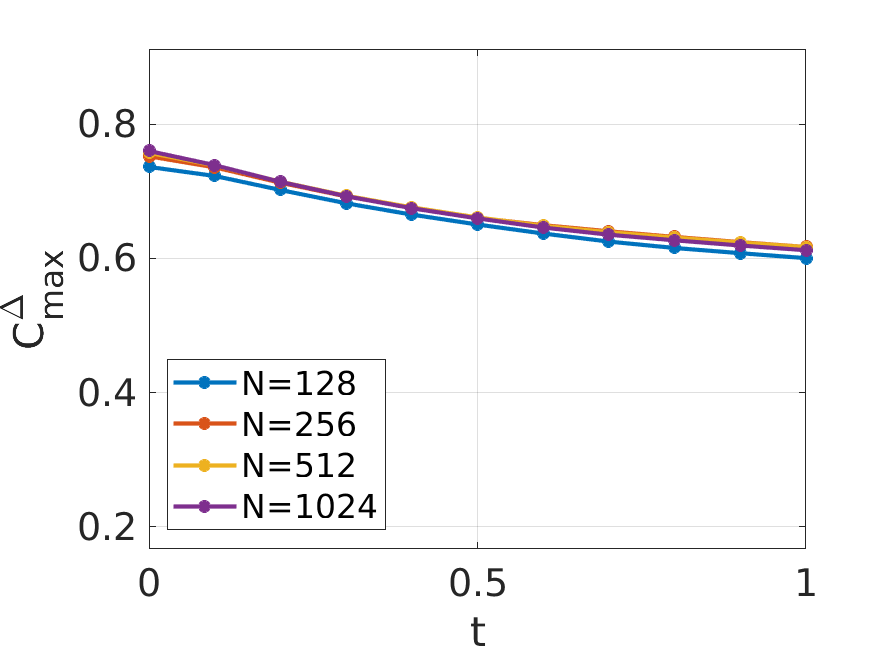}
\caption{$C^\Delta_\mathrm{max}(\alpha=H;t)$}
\end{subfigure}
\begin{subfigure}{.45\textwidth}
\includegraphics[width=\textwidth]{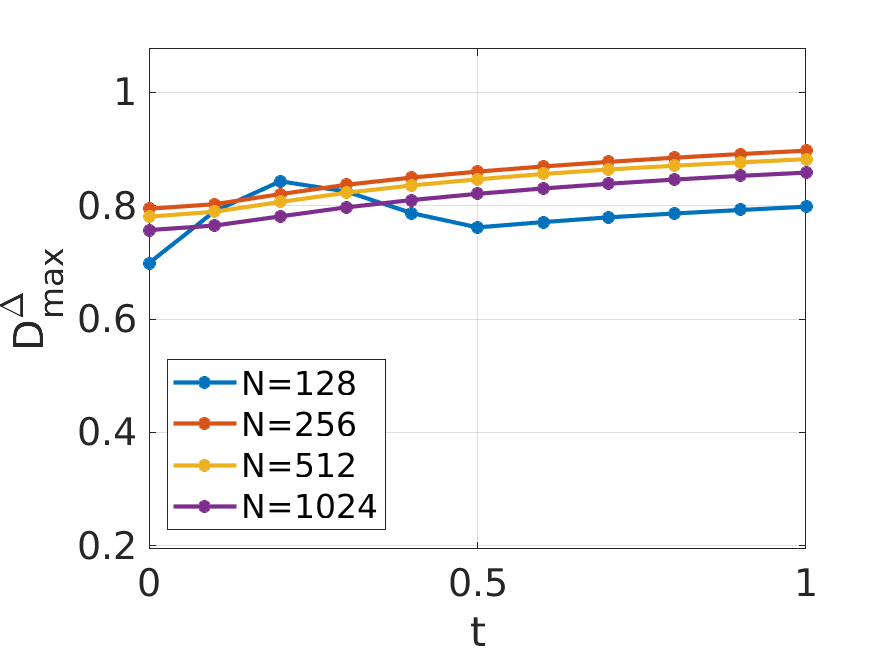}
\caption{$D^\Delta_\mathrm{max}(\lambda=2H+1;t)$}
\end{subfigure}

\caption{Temporal evolution of $C^\Delta_\mathrm{max}$ (eq. \eqref{eq:Cmax}) and $D^\Delta_\mathrm{max}$ (eq. \eqref{eq:Dmax}) for Brownian motion initial data; $H=0.15$ (top), $H=0.5$ (middle), $H=0.75$ (bottom)}
\label{fig:BM_Cmax}
\end{figure}

Figure \ref{fig:BM_Cmax} provides very clear evidence that in all cases considered, the constants $C^\Delta_{\mathrm{max}}(\alpha;t)$ and $D^\Delta_{\mathrm{max}}(\lambda;t)$ remain uniformly bounded in $t$ and $\Delta$, implying in particular a uniform decay of the structure functions $S_2(\mu^\Delta_t;r)$, and hence energy conservation in the limit.

Finally, we evaluate the evolution of the relative energy dissipation
\[
\frac{\Delta E}{E} = \frac{E^\Delta(t) - \overline{E}_0}{\overline{E}_0},
\]
numerically. Here $E^\Delta(t) = \int_{L^2_x} \Vert u \Vert_{L^2_x}^2 \, d\mu_t^\Delta(u)$ is the average ``energy'' at time $t$ and resolution $\Delta = 1/N$. The reference value $\overline{E}_0$ was obtained from the Monte-Carlo approximation of the initial data $\overline{\mu}^\Delta$ with $\Delta = 1/4096$.

\begin{figure}[H]
\begin{subfigure}{0.45\textwidth}
\includegraphics[width=\textwidth]{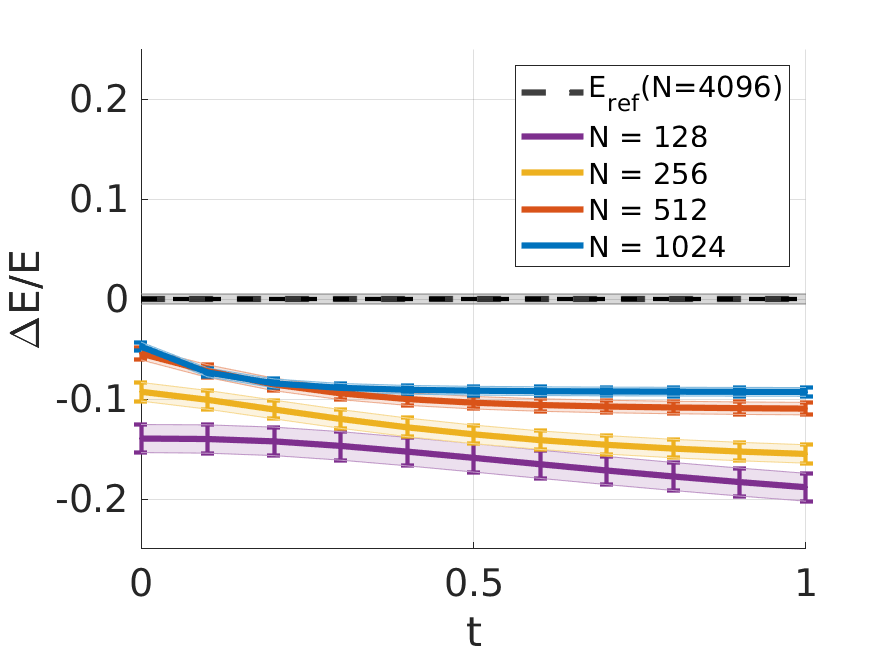}
\caption{$H=0.15$}
\end{subfigure}
\begin{subfigure}{0.45\textwidth}
\includegraphics[width=\textwidth]{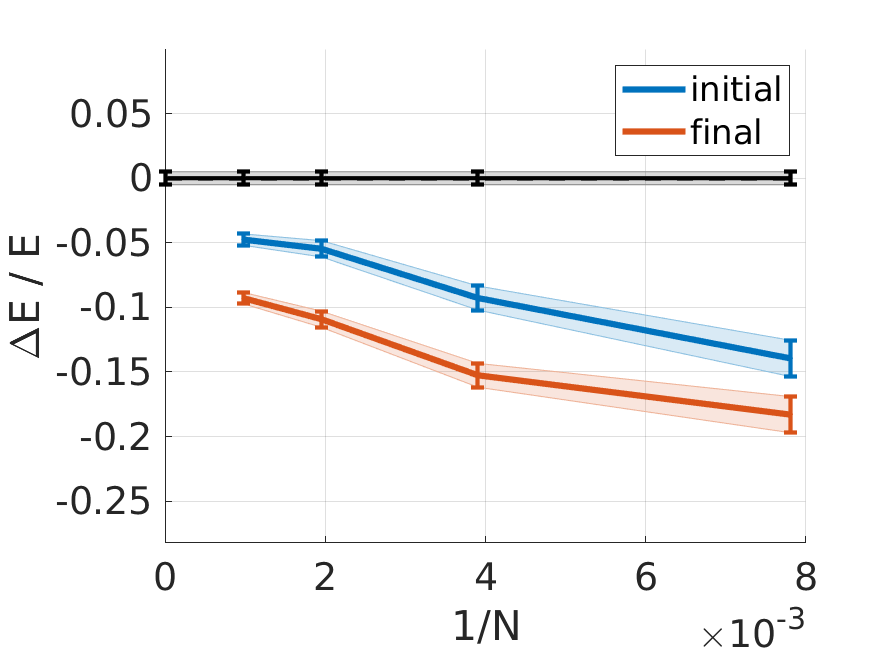}
\caption{$\Delta E/E$ vs $\Delta = 1/N$}
\end{subfigure}

\begin{subfigure}{0.45\textwidth}
\includegraphics[width=\textwidth]{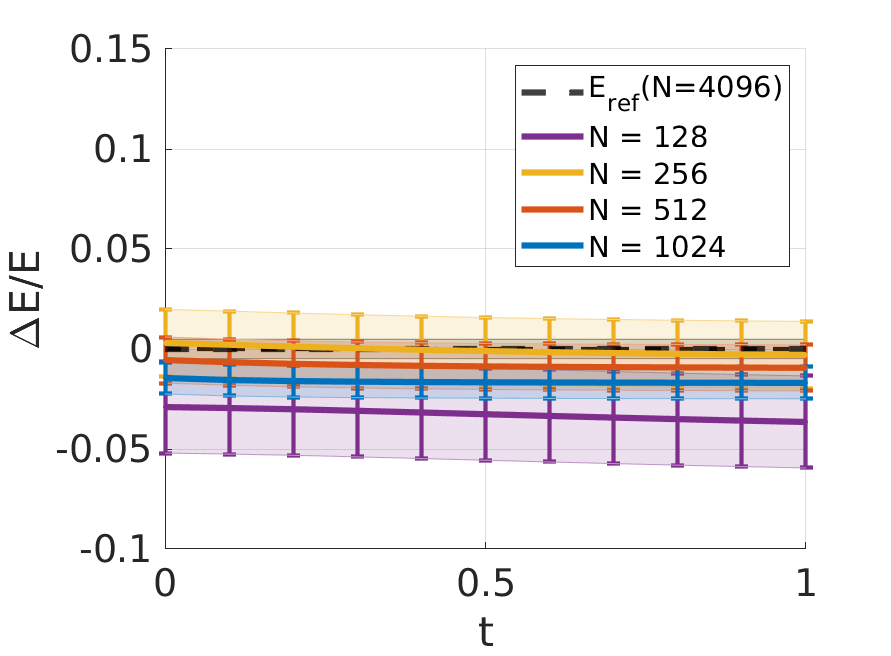}
\caption{$H=0.5$}
\end{subfigure}
\begin{subfigure}{0.45\textwidth}
\includegraphics[width=\textwidth]{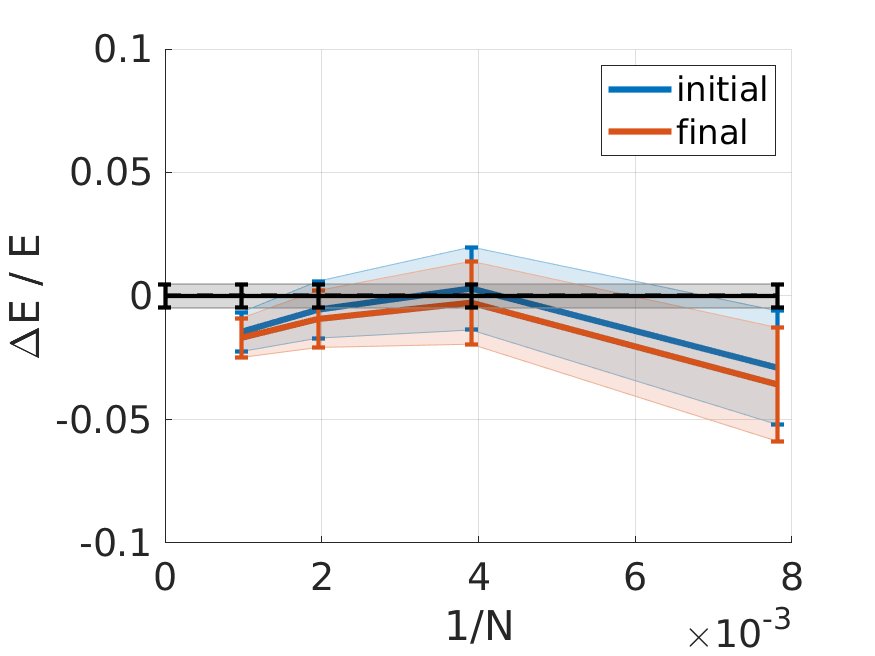}
 \caption{$\Delta E/E$ vs $\Delta = 1/N$}
\end{subfigure}

\begin{subfigure}{0.45\textwidth}
\includegraphics[width=\textwidth]{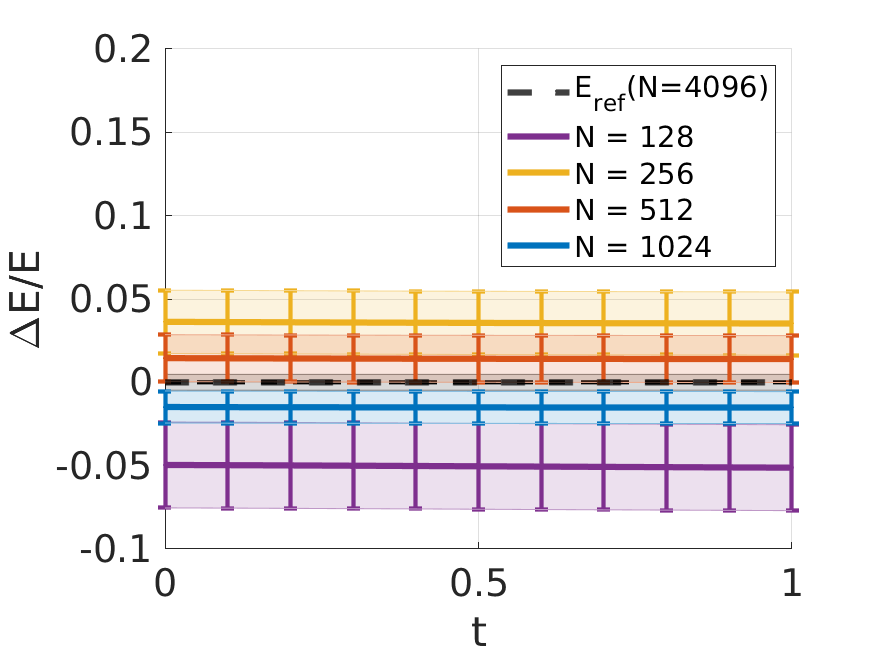}
\caption{$H=0.75$}
\end{subfigure}
\begin{subfigure}{0.45\textwidth}
\includegraphics[width=\textwidth]{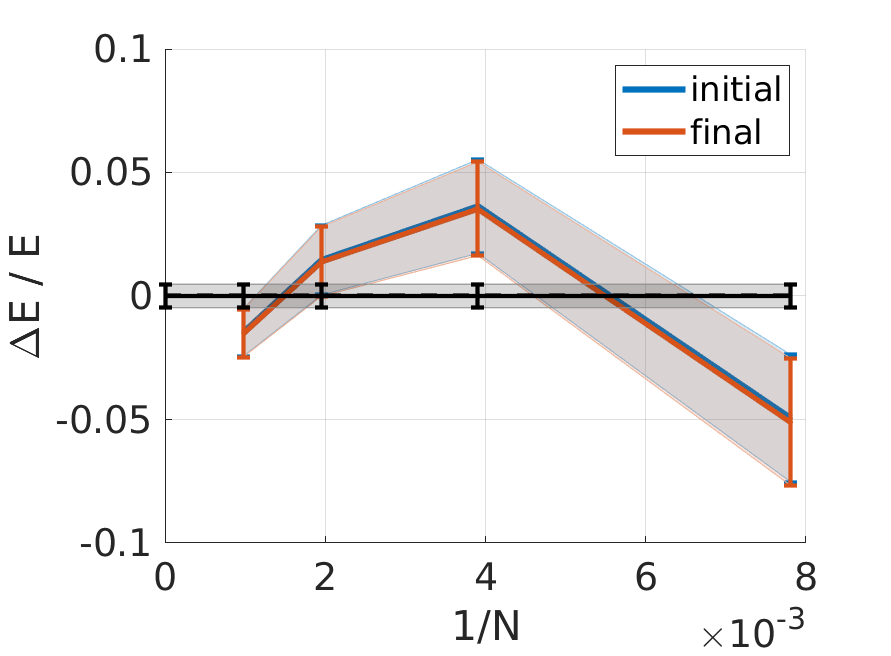}
\caption{$\Delta E/E$ vs $\Delta = 1/N$}
\end{subfigure}

\caption{Relative energy dissipation as a function of $t$ (left), and as a function of $\Delta = 1/N$ at the final time $t=1$ (right). The reference value $\overline{E}_0$ was determined from the initial data $\overline{\mu}^\Delta$ at a resolution $\Delta = 1/4096$.}
\label{fig:BM_Erel}
\end{figure}

Compared to the vortex sheet examples considered previously, a clear difference for these Brownian motion examples is the larger variance in the initial data, and as a consequence the larger error bars which indicate the estimate for the MC error. Figure \ref{fig:BM_Erel} clearly indicates that for the larger Hurst indices, the energy dissipation (difference between the blue and orange curves) is much smaller than the numerical error associated with $\Delta$ in approximating the initial data (indicated by the blue curve), and the Monte-Carlo error (indicated by the shaded region). As might be expected from the proof of energy conservation, based on a uniform bound on the structure function $S_2(\mu^\Delta_t;r) \le Cr^\alpha$ (cp. Theorem \ref{thm:statistical}), the smaller the Hurst index ($\alpha = H$), the higher the numerical resolution is required to close the gap between the orange and blue curves. Indeed, the proof of Theorem \ref{thm:statistical} provides an upper bound on the energy dissipation $\lesssim N^{-\alpha} = \Delta^\alpha$, and hence we expect the energy dissipation to be more clearly visible at a given resolution $\Delta = 1/N$ for smaller values of $\alpha$. 

\begin{figure}[H]
\begin{subfigure}{.3\textwidth}
\includegraphics[width=\textwidth]{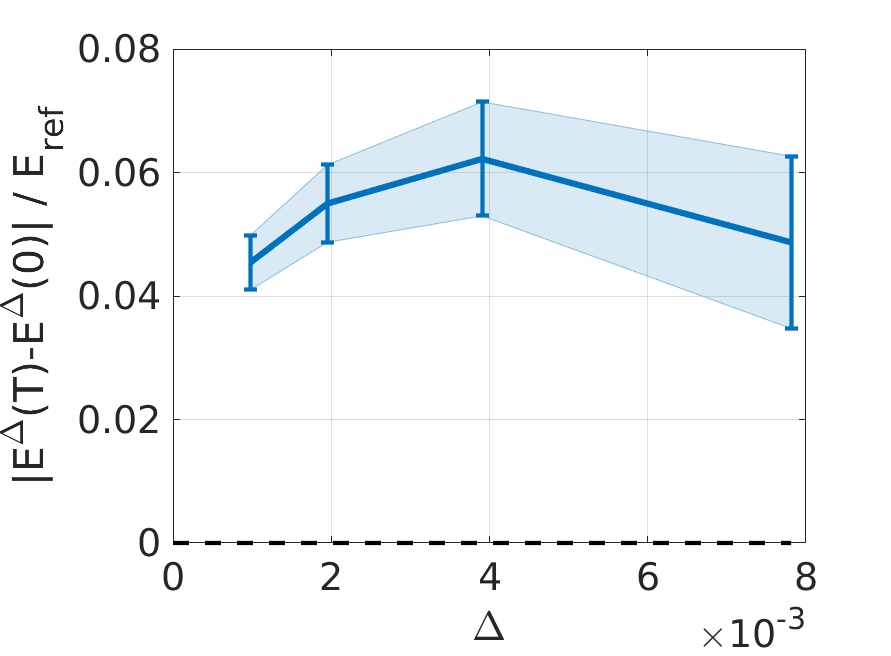}
\caption{$H=0.15$}
\end{subfigure}
\begin{subfigure}{.3\textwidth}
\includegraphics[width=\textwidth]{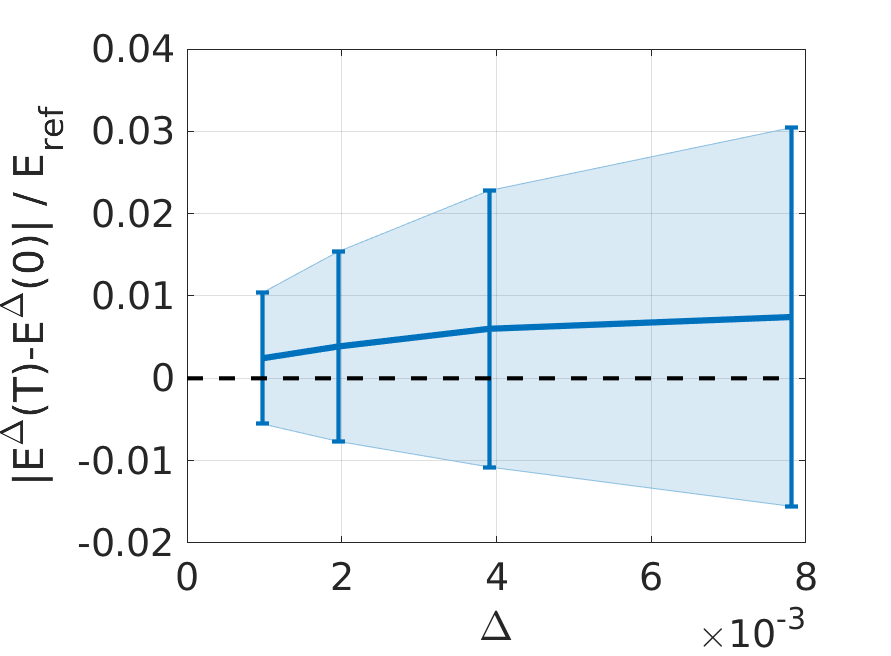}
\caption{$H=0.5$}
\end{subfigure}
\begin{subfigure}{.3\textwidth}
\includegraphics[width=\textwidth]{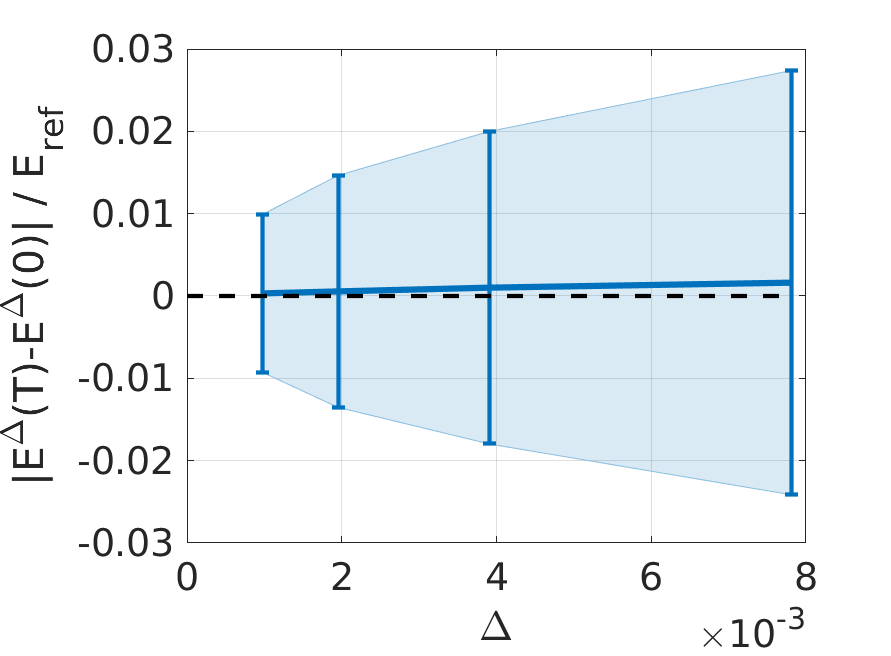}
\caption{$H=0.75$}
\end{subfigure}
\caption{Total mean energy dissipation $|E^\Delta(T)-E^\Delta(0)|/\overline{E}_0$ as a function of $\Delta = 1/N$}
\label{fig:BM_Ediff}
\end{figure}

The difference between the mean energy at the initial and final times is plotted in figure \ref{fig:BM_Ediff}. For $H=0.5$ and $H=0.75$, the zero limiting total energy dissipation is already within the MC error bars at the given resolution. For $H=\alpha=0.15$, it appears that the theoretical upper bound $\lesssim N^{-\alpha}$ converges to $0$ only at a slow rate, and therefore a higher resolution might be necessary to directly observe the energy conservation from these numerical experiments. Also in this $H=0.15$ case, we can nevertheless observe a decrease in the total dissipated energy between resolutions $\Delta =1/256$, $\Delta = 1/512$ and $\Delta = 1/1024$, and in fact a steepening of the gradient towards $\Delta = 0$. Together with the clear uniform boundedness of the structure function (cp. figure \ref{fig:BM_Cmax} (A)) and the theoretical result Theorem \ref{thm:statistical}, we conclude that the dissipated energy converges to $0$ also in this case, albeit at a slower rate.

\section{Conclusion}
\label{sec:conc}
Generalized solutions of the Euler equations \eqref{eq:Eulerfull} model the dynamics of fluids at very high Reynolds numbers. These fluids are characterized by turbulence, marked by the appearance of energy containing eddies at ever smaller scales. Energy conservation/anomalous dissipation are very interesting elements of physical theories of turbulence such as those of Kolmogorov and Onsager. 

In this article, we consider the questions of energy conservation/dissipation of solutions of the incompressible Euler equations in two space dimensions. We prove in theorem \ref{thm:Econstime} that weak solutions of the incompressible Euler equations, realized as strong (in the topology of $L^1([0,T];L^2_x))$ vanishing viscosity limits of the underlying Navier-Stokes equations conserve energy (in time). This result allows us to extend the results of \cite{CLNS2016} on energy conservation to a larger class of admissible initial data, for which strong compactness of approximate solutions is known. The proof relies on control of the underlying vorticity and an essential role is played by uniform decay of the so-called \emph{structure function} \eqref{eq:structfunT}.  

Next, we also investigate the question of energy conservation for statistical solutions of the incompressible Euler equations. Statistical solutions \cite{LMP2019,FW2018} are time-parameterized probability measures on $L^2_x$, whose time evolution is constrained in terms of moment equations, consistent with and derived from the incompressible Euler equations. They were proposed as a suitable probabilistic solution framework for the Euler equations in order to describe unstable and turbulent fluid flows. We prove in theorem \ref{thm:statistical} that statistical solutions of the Euler equations, generated as limits of numerical approximations with a Monte Carlo (MC)- Spectral viscosity (SV) method of \cite{LMP2019}, conserve energy as long as the structure function decays uniformly (in resolution). This result is of great practical utility as these statistical solutions can be computed \cite{LMP2019} and the assertions of the theory validated in numerical experiments. 

To this end, we presented a suite of numerical experiments with both deterministic and stochastic initial data, in particular (perturbations of) vortex sheets and (fractional) Brownian motion were considered as initial data. From the numerical experiments, we observed that the structure functions (and the energy spectra) were indeed uniformly decaying and energy conservation of the limit solutions was clearly demonstrated. 

In addition to validating the proposed theory, the numerical experiments were useful in the following two ways; first, we were able to consider initial data, such as vortex sheets of indefinite sign and fractional Brownian motion, for which no well-posedness theory exists at the moment. In these cases, the computed solutions were observed to validate the theory very nicely. Second, the theoretically established connection between the decay of structure functions (and spectra) and conservation of energy was found to be very useful in problems with large variance and low regularity, such as for fractional Brownian motion with low Hurst indices, where a direct computation of the energy might be inconclusive in ascertaining conservation whereas evidence from the uniform decay of the structure function (and spectra) would be clinching. 

This article only considered two-dimensional flows. As a next step, we aim to carry out a similar program for examining the questions of conservation/anomalous dissipation of energy for three-dimensional incompressible flows, in a forthcoming paper.

\appendix
\section{Two facts about Navier-Stokes} \label{app:NS}

We collect two well-known results on the two-dimensional Navier-Stokes equations. We first recall that the incompressible Navier-Stokes equations are the following system of PDEs:
\begin{gather} \label{eq:NS}
\left\{
\begin{aligned}
\partial_t u + \div(u\otimes u) + \nabla p &= \nu \Delta u, \\
\div(u) &= 0, \\
u(t=0) &= \overline{u},
\end{aligned}
\right.
\end{gather}
with viscosity $\nu > 0$. As usual, these equations are considered in their weak form after integration by a test vector field $\phi \in C^\infty([0,T]\times D)$ with compact support in $[0,T)\times D$.
We cite the following theorem \cite[p.81, Theorem 3.1]{Lions}:

\begin{theorem}
Let $\overline{u}\in L^2_x$. There exists a unique weak solution $u^\nu$ of \eqref{eq:NS} such that $u^\nu \in L^2([0,T];H^1_x) \cap L^\infty([0,T];L^2_x)$. We have for all $t\in [0,T]$:
\begin{align} \label{eq:Eidentity}
\frac12 \Vert u^\nu(t) \Vert_{L^2_x}^2 + \nu \int_0^t \Vert \nabla u^\nu(s) \Vert^2_{L^2_x} \, ds
=
\frac12 \Vert \overline{u} \Vert_{L^2_x}^2.
\end{align}
\end{theorem}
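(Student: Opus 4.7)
The plan is to prove this classical result via a Galerkin approximation, exploiting the two-dimensional structure to obtain both uniqueness and the energy identity (not merely the energy inequality available in three dimensions). First I would fix an orthonormal basis of $L^2_x$ consisting of divergence-free eigenfunctions of $-\Delta$ (for $\mathbb{T}^2$, the divergence-free trigonometric polynomials), let $P_N$ denote the projection onto the span of the first $N$ of them, and solve the finite-dimensional ODE system for $u_N(t)$ obtained by applying $P_N$ to \eqref{eq:NS}. Testing this system against $u_N$ itself (which is permissible since $u_N$ lies in the subspace), and using that the trilinear form $(u_N\cdot\nabla u_N, u_N)$ vanishes on divergence-free fields, produces the exact identity $\tfrac12\|u_N(t)\|_{L^2_x}^2 + \nu\int_0^t\|\nabla u_N\|_{L^2_x}^2\,ds = \tfrac12\|P_N\overline{u}\|_{L^2_x}^2$. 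This yields uniform bounds of $u_N$ in $L^\infty_t L^2_x \cap L^2_t H^1_x$.

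Next I would extract a weak limit $u^\nu$. The main work is showing strong compactness in $L^2_t L^2_x$ so that the nonlinear term passes to the limit. For this I would use the equation to bound $\partial_t u_N$ in $L^{4/3}_t H^{-1}_x$ (in 2D the worst term is the nonlinearity, which by Ladyzhenskaya $\|u_N\|_{L^4_x}^2 \lesssim \|u_N\|_{L^2_x}\|\nabla u_N\|_{L^2_x}$ lies in $L^2_t L^2_x$, so $\mathrm{div}(u_N\otimes u_N)\in L^2_t H^{-1}_x$), and then apply the Aubin--Lions lemma with the embedding chain $H^1_x \embedsc L^2_x \embeds H^{-1}_x$ to extract a subsequence converging strongly in $L^2_t L^2_x$. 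This suffices to identify the limit of $u_N\otimes u_N$ with $u^\nu\otimes u^\nu$ in the sense of distributions, so $u^\nu$ is a weak solution in the desired class.

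For uniqueness, given two solutions $u^\nu, v^\nu$ with the same initial datum, I would form $w=u^\nu-v^\nu$ and derive an equation for $w$. Testing with $w$ (justified in 2D because $w\in L^2_t H^1_x$ with $\partial_t w\in L^2_t H^{-1}_x$ gives $w\in C_t L^2_x$ by the Lions--Magenes lemma) and controlling the bilinear remainder $|(w\cdot\nabla v^\nu, w)|$ via Ladyzhenskaya $\|w\|_{L^4_x}^2\lesssim \|w\|_{L^2_x}\|\nabla w\|_{L^2_x}$, then absorbing the $\nabla w$ into the viscous dissipation, yields $\tfrac{d}{dt}\|w\|_{L^2_x}^2 \le C\|\nabla v^\nu\|_{L^2_x}^2\|w\|_{L^2_x}^2$, and Gronwall gives $w\equiv 0$ since $\|\nabla v^\nu\|_{L^2_x}^2\in L^1_t$.

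The energy identity \eqref{eq:Eidentity} is the delicate part and requires more than weak convergence: in 3D one only obtains the $\le$ inequality from lower semicontinuity. Here I would use that the regularity $u^\nu\in L^2_t H^1_x$ together with $\partial_t u^\nu\in L^2_t H^{-1}_x$ (verified a posteriori from the equation via the same 2D Ladyzhenskaya estimate used above) places $u^\nu$ in the admissible class of the Lions--Magenes lemma, so that $t\mapsto \|u^\nu(t)\|_{L^2_x}^2$ is absolutely continuous and its derivative equals $2\langle \partial_t u^\nu, u^\nu\rangle_{H^{-1}_x,H^1_x}$. Pairing the equation with $u^\nu$ itself (now rigorously justified) and integrating gives the exact identity. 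The main obstacle is therefore securing this extra regularity $\partial_t u^\nu \in L^2_t H^{-1}_x$, which hinges crucially on the dimension being two; in three dimensions the nonlinearity only lies in $L^{4/3}_t H^{-1}_x$, which is insufficient.
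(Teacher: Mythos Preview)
The paper does not prove this theorem at all: it is stated in Appendix~\ref{app:NS} as a citation of a known result, namely \cite[p.~81, Theorem~3.1]{Lions}. So there is no ``paper's own proof'' to compare against; the authors simply import the statement as background.

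Your outline is the standard Galerkin/Aubin--Lions/Ladyzhenskaya argument and is correct in substance --- indeed it is essentially the proof one finds in Lions' book and elsewhere. One small inconsistency: you write that you would bound $\partial_t u_N$ in $L^{4/3}_t H^{-1}_x$, but then (correctly) observe that in two dimensions Ladyzhenskaya gives $u_N\otimes u_N \in L^2_t L^2_x$, hence $\div(u_N\otimes u_N)\in L^2_t H^{-1}_x$. Combined with $\nu\Delta u_N\in L^2_t H^{-1}_x$, this actually places $\partial_t u_N$ in $L^2_t H^{-1}_x$, not merely $L^{4/3}_t H^{-1}_x$ (the latter is the 3D bound). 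This is harmless for the Aubin--Lions step, but you rely on the sharper $L^2_t H^{-1}_x$ regularity later when invoking Lions--Magenes to justify testing the equation against $u^\nu$ and obtain the \emph{equality} in \eqref{eq:Eidentity}, so it is worth stating it correctly from the start.
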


Let us furthermore recall that 
\begin{align} \label{eq:L2vortgrad}
\Vert \nabla u^\nu \Vert_{L^2_x} = \Vert \omega^\nu \Vert_{L^2_x},
\end{align}
holds for any $u^\nu \in H^1_x$, such that $\div(u^\nu) = 0$, $\curl(u^\nu) = \omega^\nu$. One may therefore write \eqref{eq:Eidentity} in the equivalent form 
\begin{align}\label{eq:Eidentity2}
\frac12 \Vert u^\nu(t) \Vert_{L^2_x}^2 + \nu \int_0^t \Vert \omega^\nu(s) \Vert^2_{L^2_x} \, ds
=
\frac12 \Vert \overline{u} \Vert_{L^2_x}^2.
\end{align}
We also note the following a priori $L^2$ estimate for the vorticity.

\begin{lemma} \label{lem:enstrophyest}
Let $u^\nu$ be the solution of \eqref{eq:NS} with initial data $\overline{u} \in L^2_x$. Let $\omega^\nu = \curl(u^\nu)$ denote its distributional vorticity. Then
\[
\Vert \omega^\nu(t) \Vert_{L^2_x} \le \frac{\Vert \overline{u} \Vert_{L^2_x}}{\sqrt{\nu t}},
\quad
\text{for all } t>0.
\]
\end{lemma}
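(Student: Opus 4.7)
The plan is to combine two ingredients: the enstrophy dissipation built into the energy identity \eqref{eq:Eidentity2}, and the monotonicity of $s \mapsto \Vert \omega^\nu(s)\Vert_{L^2_x}^2$. From \eqref{eq:Eidentity2}, dropping the non-negative kinetic energy term at time $t$, we read off the global enstrophy bound
\[
\nu \int_0^t \Vert \omega^\nu(s) \Vert_{L^2_x}^2 \, ds \le \tfrac{1}{2} \Vert \overline{u} \Vert_{L^2_x}^2.
\]
This already shows that the time-averaged enstrophy is controlled by the viscosity. To turn this integral bound into a pointwise-in-$t$ bound, I would show that the enstrophy itself is non-increasing.

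For the monotonicity, I would take the curl of the Navier-Stokes momentum equation, which in two dimensions (and in the absence of vortex stretching) yields the vorticity transport equation
\[
\partial_t \omega^\nu + (u^\nu \cdot \nabla) \omega^\nu = \nu \Delta \omega^\nu.
\]
For $\overline{u}\in L^2_x$, the parabolic smoothing property of the two-dimensional Navier-Stokes equations ensures that $u^\nu(t)$ is smooth for every $t>0$, so one may legitimately multiply by $\omega^\nu$ and integrate by parts. Using $\div u^\nu=0$ to eliminate the transport term, one obtains the enstrophy balance
\[
\tfrac{d}{ds} \Vert \omega^\nu(s) \Vert_{L^2_x}^2 = -2\nu \Vert \nabla \omega^\nu(s) \Vert_{L^2_x}^2 \le 0
\quad \text{for all } s>0,
\]
so that $s\mapsto \Vert \omega^\nu(s) \Vert_{L^2_x}^2$ is non-increasing on $(0,t]$.

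Combining the two facts, monotonicity gives $t\Vert \omega^\nu(t) \Vert_{L^2_x}^2 \le \int_0^t \Vert \omega^\nu(s) \Vert_{L^2_x}^2 \, ds$, and the integral bound then yields $\Vert \omega^\nu(t) \Vert_{L^2_x}^2 \le \Vert \overline{u}\Vert_{L^2_x}^2/(2\nu t)$, which is a fortiori bounded by $\Vert \overline{u}\Vert_{L^2_x}^2/(\nu t)$, as claimed.

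The main (mild) obstacle is the low regularity of the initial data: since $\overline{u}$ is only assumed in $L^2_x$, one cannot write the enstrophy identity starting from $s=0$ (indeed $\omega^\nu(0)$ need not lie in $L^2_x$). This is why the argument for monotonicity is carried out on $(0,t]$ using parabolic regularisation of the two-dimensional Navier-Stokes equations, and then extended up to the endpoints by a standard limiting/approximation argument (e.g. approximating $\overline{u}$ by smooth divergence-free data and passing to the limit, using that the right-hand side $\Vert \overline{u}\Vert_{L^2_x}/\sqrt{\nu t}$ is continuous in $\overline{u}$ in the $L^2_x$ norm).
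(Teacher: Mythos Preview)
Your argument is correct and is in fact the standard proof of this classical smoothing estimate: combine the energy identity \eqref{eq:Eidentity2} (to bound the time-integrated enstrophy) with the monotonicity of $s\mapsto \Vert \omega^\nu(s)\Vert_{L^2_x}^2$ coming from the 2D vorticity equation, and conclude via $t\,\Vert \omega^\nu(t)\Vert_{L^2_x}^2 \le \int_0^t \Vert \omega^\nu(s)\Vert_{L^2_x}^2\,ds$. You even obtain the slightly sharper bound $\Vert \omega^\nu(t)\Vert_{L^2_x}^2 \le \Vert \overline{u}\Vert_{L^2_x}^2/(2\nu t)$.

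There is nothing to compare against: the paper states Lemma~\ref{lem:enstrophyest} in Appendix~\ref{app:NS} as a well-known a~priori estimate and does not supply a proof. Your discussion of the regularity issue at $s=0$ (parabolic smoothing for $t>0$, together with approximation of $\overline{u}$ by smooth data) is the right way to make the formal computation rigorous.
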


\section{Proof of Proposition \ref{prop:compactchar}} \label{sec:proofcompactchar}

In this section, we provide the details of the proof of Proposition \ref{prop:compactchar}, on page \pageref{prop:compactchar}.

\begin{proof}
Firstly, let us note that an approximate solution sequence is uniformly bounded in $L^\infty_t(L^2_x)$, by definition. By interpolation, it is not difficult to see that $u^\nu$ is strongly relatively compact in $L^p_t L^2_x$ (for any $1\le p < \infty$) if, and only if, it is strongly relatively compact in $L^2_tL^2_x$. It will thus suffice to prove that $u^\nu$ is strongly precompact in $L^2_t L^2_x$ if, and only if, there exists a modulus of continuity $\phi(r)$ such that $S_2^T(u^\nu;r) \le \phi(r)$ for all $r\ge 0$, uniformly for all $\nu$.

To this end, let us first assume that $\{u^\nu\} \subset L^2_tL^2_x$ is relatively compact. Let $K = \overline{\{u^\nu\}}$ denote its compact closure in $L^2_tL^2_x$. We claim that 
\[
\phi(r) = \max_{u\in K} S^T_2(u;r),
\]
defines a (bounded) modulus of continuity. To see that $\phi(r)$ is a modulus of continuity, let $\epsilon > 0$ be given. We need to show that there exists $r_0> 0$, such that $\phi(r) < \epsilon$ for all $r\le r_0$. Since $K$ is compact, there exists $N\in \mathbb{N}$, and $u_1, \dots, u_N \in K$, such that 
\[
K \subset \bigcup_{i=1}^N B_{\epsilon/3}(u_i),
\quad 
B_{\epsilon/3}(u_i) 
:=
\left\{ 
u\in L^2_tL^2_x \, \Big| \, \Vert u - u_i \Vert_{L^2_tL^2_x} < \epsilon /3 
\right\}.
\]
Since $\lim_{r\to 0} S^T_2(u_i;r) = 0$ for each $i\in \{1,\dots, N\}$, we can find $r_i > 0$, such that $S^T_2(u_i;r) < \epsilon/3 $ for $r\le r_i$. Let $r_0 := \min_{i=1,\dots, N} r_i$. Then given $r\le r_0$, and for any $u\in K$, there exists $i$ such that $\Vert u - u_i \Vert_{L^2_tL^2_x} < \epsilon/3$, and 
\[
S_2^T(u;r)
\le 2\Vert u - u_i \Vert_{L^2_tL^2_x} + S_2^T(u_i;r) 
< \epsilon.
\]
Since $u\in K$ was arbitrary, it follows that $\phi(r) < \epsilon$ for $r\le r_0$. We conclude that $\phi(r)$ defines a modulus of continuity in this case. 

To prove the other direction, assume that there exists a uniform modulus of continuity $\phi(r)$, giving a uniform upper bound on $S^T_2(u^\nu;r) \le \phi(r)$. By the characterisation of precompact subsets of Bochner spaces [Simon, Sec.3, Thm 1], precompactness in $L^2_{t}L^2_x$ is equivalent to the following two properties:
\begin{enumerate}
\item for any $0<t_1<t_2<T$, the set
\[
\left\{
\int_{t_1}^{t_2} u^\nu(t) \, dt 
\;\Big|\;
\nu
\right\}
\subset L^2_x,
\]
is precompact,
\item We have
\[
\int_0^{T-h} \Vert u^\nu(t+h)-u^\nu(t)\Vert_{L^2_x}^2 \, dt 
\to 0,
\]
uniformly in $\nu$, as $h\to 0$.
\end{enumerate}

By Kolmogorov's characterisation of compact subsets of $L^2_x$, the first property is in turn equivalent to the statement that, for any $0<t_1 < t_2< T$,
\[
\int_D \fint_{B_r(0)} \left|\int_{t_1}^{t_2} [u^\nu(x+h,t)-u^\nu(x,t)] \, dt\right|^2 \, dh \, dx \to 0,
\]
uniformly as $h\to 0$. If $S_2^T(u^\nu;r)\le \phi(r)$, then
\begin{align*}
\int_D &\fint_{B_r(0)} \left|\int_{t_1}^{t_2} [u^\nu(x+h,t)-u^\nu(x,t)] \, dt\right|^2 \, dh \, dx
\\
&\le
(t_2-t_1) \int_D \fint_{B_r(0)} \int_{t_1}^{t_2} \left|u^\nu(x+h,t)-u^\nu(x,t)\right|^2 \, dt \, dh \, dx
\\
&\le T\, S^T_2(u^\nu;r) \le  T \, \phi(r) \to 0,
\end{align*}
uniformly as $r\to 0$. This shows property (1) of Simon's characterisation of compactness.

To prove the second property (2), we use the fact that $t\mapsto u^\nu(t)$ is uniformly Lipschitz-continuous in time, with values in $H^{-L}_x$ for some $L>0$ (which is part of the definition of an approximate solution sequence), as well as the assumed decay of the structure function, which (as shown below) implies that $u^\nu(t)$ is uniformly approximated in $L^2_tL^2_x$ by the (spatial) mollification $u^\nu_\eta(t)$, as $\eta \to 0$. 

Fix $\eta>0$, $\eta \le 1$ for the moment. Then
\begin{gather} \label{eq:prth}
\begin{aligned}
\int_0^{T-h} \Vert u^\nu(t+h) - u^\nu(t) \Vert_{L^2_x}^2 \, dt
&\le 
2\int_0^{T-h} \Vert u_\eta^\nu(t+h) - u_\eta^\nu(t) \Vert_{L^2_x}^2 \, dt
\\
&\qquad + 4\int_0^T \Vert u^\nu(t) - u_\eta^\nu(t) \Vert_{L^2_x}^2 \, dt.
\end{aligned}
\end{gather}
Using the inequality
\[
\Vert u \Vert_{L^2_x}^2 \le \alpha^2 \Vert u \Vert_{H^1_x}^2 + C\alpha^{-2L} \Vert u \Vert_{H^{-L}_x}^2,
\qquad 
\forall \, 0 < \alpha \le 1,
\]
with $\alpha= \eta$, and the estimate $\Vert u_\eta \Vert_{H^1}\le C(\Vert \nabla\rho\Vert_{L^\infty})\eta^{-1}\Vert u \Vert_{L^2}$, the first term on the right of \eqref{eq:prth} can be estimated by
\begin{align*}
\int_0^{T-h} \Vert u_\eta^\nu(t+h) - u_\eta^\nu(t) \Vert_{L^2_x}^2 \, dt
&\le 
C\eta^2 T \Vert u_\eta^\nu \Vert_{L^\infty_t(H^1_x)}
\\
&\quad 
+ 
C\eta^{-2L} \int_0^{T-h} \Vert u_\eta^\nu(t+h) - u_\eta^\nu(t) \Vert_{H^{-L}_x}^2 \, dt
\\
&\le 
C\eta T \Vert u^\nu \Vert_{L^\infty_t(L^2_x)}
+ CT \eta^{-2L} \Vert u^\nu \Vert_{\mathrm{Lip}_t(H^{-L}_x)} \, h.
\end{align*}
By assumption (approx. sol. seq.), both $\Vert u^\nu \Vert_{L^\infty_t(L^2_x)}$ and $\Vert u^\nu \Vert_{\mathrm{Lip}_t(H^{-L}_x)}$ are uniformly bounded in $\nu$. We can thus find a constant $C>0$, depending only on the sequence $u^\nu$, such that 
\[
\sup_{\nu}
\int_0^{T-h} \Vert u_\eta^\nu(t+h) - u_\eta^\nu(t) \Vert_{L^2_x}^2 \, dt
\le 
CT \left(\eta 
+\eta^{-2L} \, h\right).
\]
The second term on the right of \eqref{eq:prth} can be estimated by noting that there exists a constant $C>0$ (depending only on the mollifier), such that
\[
\int_0^T \Vert u^\nu(t) - u^\nu_\eta(t) \Vert_{L^2_x}^2 \, dt
\le 
C S_2^T(u^\nu;\eta)^2
\le 
C \phi(\eta)^2.
\]
In particular, it now follows that, for some constant $C>0$, depending on the sequence $u^\nu$, and the mollifier, we have as $h\to 0$:
\[
\limsup_{h\to 0}\left( \sup_{\nu}
\int_0^{T-h} \Vert u^\nu(t+h) - u^\nu(t) \Vert_{L^2_x}^2 \, dt
\right)
\le 
2CT \eta + 4C \phi(\eta)^2.
\]
In our argument $\eta>0$ was chosen arbitrarily, and the left-hand side is independent of it. We can thus let $\eta\to 0$, to find
\[
\limsup_{h\to 0} 
\left(\sup_{\nu}
\int_0^{T-h} \Vert u^\nu(t+h) - u^\nu(t) \Vert_{L^2_x}^2 \, dt
\right)
= 0.
\]
This demonstrates the second property of Simon's characterization of pre-compactness, and concludes our proof.

\end{proof}

\section{Functions with sub-linear growth at infinity}

The goal of this appendix is to prove the following technical result

\begin{lemma}\label{lem:improvmod}
Let $f: [0,\infty) \to [0,\infty)$, $z\mapsto f(z)$ be a non-negative function with the following two properties:
\begin{enumerate}
\item[(P1)] $\sup_{z\in (0,\infty)} f(z)/z < \infty$, $f(0) = 0$,
\item[(P2)] $f(z)$ grows sub-linearly at infinity: $f(z) \ll z$, as $z\to \infty$, i.e. \[
\limsup_{z\to \infty} f(z)/z = 0.
\]
\end{enumerate}
Then there exists a continuous, strictly monotonically increasing function $F: [0,\infty) \to [0,\infty)$, $z\mapsto F(z)$, such that
\begin{enumerate}
\item $F(z) \ge f(z)$ for all $z \in [0,\infty)$,
\item $F(0) = 0$, $F(z)\to \infty$ as $z\to \infty$,
\item $F(z) \ll z$ as $z\to \infty$.
\end{enumerate}
Furthermore, the inverse $F^{-1}: [0,\infty) \to [0,\infty)$, $y \mapsto F^{-1}(y)$, grows super-linearly at infinity in the sense that $F^{-1}(y) \gg y$ as $y\to \infty$. And more precisely, $F^{-1}$ can be represented in the form $F^{-1}(y) = \sigma(\sqrt{y})y$, where 
\begin{enumerate}\setcounter{enumi}{3}
\item $\sigma: [0,\infty)\to [0,\infty)$ is a continuous, monotonically increasing function.
\item There exists $\sigma_0 > 0$, such that $\sigma(\sqrt{y}) \ge \sigma_0$ for all $y\ge 0$.
\item $\sigma(\sqrt{y}) \to \infty$ as $y \to \infty$.
\end{enumerate}
\end{lemma}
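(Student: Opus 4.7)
The plan is to construct $F$ as the least concave non-negative majorant of $f$ (taking value $0$ at $0$), plus a small strictly increasing correction. The key structural fact driving everything is that for any concave $F\colon[0,\infty)\to[0,\infty)$ with $F(0)=0$, the quotient $F(z)/z$ is automatically non-increasing; equivalently $F^{-1}(y)/y$ is non-decreasing, which is exactly what is needed to make the resulting function $\sigma(t):=F^{-1}(t^2)/t^2$ monotonically increasing.

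Explicitly, set $F_0(z) := \inf\{g(z) : g\colon[0,\infty)\to[0,\infty)\text{ concave},\ g(0)=0,\ g\ge f\}$. Since the linear function $g(z)=M_0 z$, with $M_0 := \sup_{z>0} f(z)/z < \infty$ by (P1), is an admissible majorant, $F_0$ is everywhere finite and satisfies $F_0(z)\le M_0 z$. Taking the infimum in the concavity inequality $g(z_\lambda)\ge (1-\lambda)g(z_1)+\lambda g(z_2)$ shows $F_0$ is itself concave; clearly $F_0(0)=0$, $F_0\ge f$, and $F_0\ge 0$. Concavity together with non-negativity and $F_0(0)=0$ forces $F_0$ to be non-decreasing (any decrease at some point would, by concavity, continue linearly and eventually make $F_0$ negative, contradicting $F_0\ge 0$). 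Continuity of $F_0$ on $(0,\infty)$ is automatic for concave functions, and the bound $F_0(z)\le M_0 z$ gives continuity at $0$. The only non-trivial property left is $F_0(z)=o(z)$: given $\epsilon>0$, pick $R$ so that $f(s)\le\epsilon s$ for $s\ge R$, and note $f(s)\le M_0 R$ on $[0,R]$; then the concave function $z\mapsto\min(M_0 z,\,\epsilon z+M_0 R)$ majorizes $f$ and vanishes at $0$, so $F_0(z)\le\epsilon z+M_0 R$, yielding $\limsup_{z\to\infty} F_0(z)/z\le\epsilon$.

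Next, define $F(z) := F_0(z)+\log(1+z)$. This inherits concavity, continuity, $F(0)=0$, $F\ge f$, and $F(z)/z\to 0$ from $F_0$, while $\log(1+z)$ provides $F(z)\to\infty$ and makes $F$ strictly increasing (since $F_0$ is non-decreasing and $\log(1+z)$ is strictly increasing). Moreover, since $F(z)/z = F_0(z)/z+\log(1+z)/z$ with both summands bounded near $0$, the right-derivative $F'_+(0)$ exists and satisfies $1\le F'_+(0)\le M_0+1$.

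Finally, define $\sigma(t) := F^{-1}(t^2)/t^2$ for $t>0$, extended by $\sigma(0):=1/F'_+(0)$; then $F^{-1}(y)=\sigma(\sqrt{y})\,y$ by construction. Continuity of $F^{-1}$ (the inverse of a continuous, strictly increasing function), together with the limit $\sigma(t)\to 1/F'_+(0)$ as $t\to 0^+$, gives continuity of $\sigma$ on $[0,\infty)$. Since $F$ is concave with $F(0)=0$, the ratio $F(z)/z$ is non-increasing, equivalently $F^{-1}(y)/y$ is non-decreasing, so $\sigma$ is monotonically increasing in $t$. The lower bound $\sigma(\sqrt{y})\ge\sigma(0)=:\sigma_0>0$ follows from monotonicity, and $\sigma(\sqrt{y})\to\infty$ translates into $F^{-1}(y)/y\to\infty$, which is equivalent to $F(z)/z\to 0$ and thus holds. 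The main technical step in this plan is establishing $F_0(z)=o(z)$ via the sandwich argument above; the remaining verifications reduce to routine bookkeeping.
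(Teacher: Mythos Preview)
Your proof is correct and takes a genuinely different, cleaner route than the paper's. The paper builds $F$ by hand: it first sets $q(z)=f(z)/z$, then constructs a piecewise-linear, monotonically decreasing majorant $Q(z)\ge q(z)$ via a recursively defined sequence $q_k$ (engineered so that $(Q(z)z)'\ge 0$), and finally puts $F(z)=Q(z)z+\min(z^2,\sqrt z)$. The monotonicity of $\sigma$ then comes from the explicit identity $\sigma(\sqrt{y})=1/Q(F^{-1}(y))$ together with the monotonicity of $Q$. This is entirely elementary but requires several auxiliary claims and a somewhat delicate recursion.

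Your approach replaces all of that with a single structural observation: take $F_0$ to be the least concave majorant of $f$ vanishing at $0$, then set $F=F_0+\log(1+z)$. Concavity with $F(0)=0$ automatically forces $F(z)/z$ to be non-increasing (in fact strictly decreasing, since $\log(1+z)/z$ is strictly decreasing), which is exactly the property the paper works hard to arrange via the condition $Q'(z)z+Q(z)\ge 0$. The sandwich argument you give for $F_0(z)=o(z)$, comparing against $\min(M_0 z,\epsilon z+M_0 R)$, is the only substantive step and is clean. The paper's construction buys explicitness and avoids invoking the concave envelope; yours buys brevity and makes the reason for the monotonicity of $\sigma$ transparent.
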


We will construct $F(z)$ to be of the form $F(z) = Q(z) \, z$, where $Q(z)$ is a piecewise linear, continuous function which dominates $f(z)/z \le Q(z)$.

We now state the following core Lemma, which is used in the proof of Lemma \ref{lem:improvmod}.

\begin{lemma} \label{lem:lininterp}
If $q: [0,\infty) \to [0, \infty)$, $z\mapsto q(z)$ is a bounded function such that $q(z) \to 0$ as $z\to \infty$, then there exists a continuous function $Q(z) > 0$, linear on integer intervals $z \in [k,k+1]$ for $k=0,1,2,\dots$, such that $q(z) \le Q(z)$ for all $z\in [0,\infty)$. In addition, we have $Q(z) \to 0$ as $z\to \infty$, and the following bound on the derivative is verified (on each linear interval):
\[
Q'(z)z + Q(z) \ge 0, \quad \forall \, z\in [0,\infty)\setminus \mathbb{N}.
\]
Furthermore, $z \mapsto Q(z)$ is a monotonically decreasing function.
\end{lemma}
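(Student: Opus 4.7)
The plan is to construct $Q$ as a piecewise linear interpolant of values $Q(k)=N_k$ at the integer points, reducing the Lemma to a recursive choice of the sequence $\{N_k\}_{k\ge 0}$. As a preliminary step I would replace $q$ by its non-increasing envelope $\bar q(z):=\sup_{s\ge z}q(s)$, which is still non-negative, bounded, and tends to zero, so that it suffices to dominate $\bar q$ (since $q\le \bar q$).

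Next I would translate the nontrivial conditions of the Lemma into simple inequalities on the $N_k$. If $Q$ is linear on $[k,k+1]$ and non-increasing, then $Q(z)\ge N_{k+1}$ on this interval, while $\bar q(z)\le \bar q(k)$, so the domination $Q\ge \bar q$ is equivalent to
\[
N_{k+1}\ge \bar q(k)\qquad \text{for all }k\ge 0.
\]
The condition $Q'(z)z+Q(z)\ge 0$, i.e.\ $(zQ(z))'\ge 0$, unfolds on $(k,k+1)$ to the linear expression $N_k+(N_{k+1}-N_k)(2z-k)$, which (when $N_{k+1}\le N_k$) has non-positive slope and attains its infimum at $z=(k+1)^-$, yielding the single requirement
\[
(k+2)\,N_{k+1}\ge (k+1)\,N_k.
\]
Together with monotonicity $N_{k+1}\le N_k$, I would therefore set $N_0:=\bar q(0)+1$ and define recursively
\[
N_{k+1}:=\max\!\Big(\bar q(k),\ \tfrac{k+1}{k+2}\,N_k\Big),
\]
so that positivity, the two displayed inequalities, and the monotonicity $N_{k+1}\le N_k$ (using $\bar q(k)\le \bar q(k-1)\le N_k$ by induction) hold automatically.

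The one nontrivial verification, and what I expect to be the main technical point, is that $N_k\to 0$. The derivative constraint $N_{k+1}\ge \tfrac{k+1}{k+2}N_k$ caps the decay rate at roughly $1/k$, so the decay of $\bar q$ cannot be directly inherited. The argument I have in mind is the following: given $\varepsilon>0$, choose $K$ with $\bar q(k)<\varepsilon$ for all $k\ge K$; as long as $N_k>\varepsilon$ for $k\ge K$, the max in the recursion is realised by the second term, which gives the telescoping identity
\[
N_{K+j}=\prod_{i=0}^{j-1}\frac{K+i+1}{K+i+2}\,N_K=\frac{K+1}{K+j+1}\,N_K,
\]
which tends to $0$ as $j\to \infty$. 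Hence $N_k$ must drop below $\varepsilon$ after finitely many steps, and the bound $N_{k+1}\le\max(\bar q(k),N_k)\le \varepsilon$ ensures it stays below thereafter. Thus $N_k\to 0$.

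Finally, defining $Q$ by linear interpolation between the points $(k,N_k)$ yields a continuous, piecewise linear, strictly positive, non-increasing function with $Q(z)\to 0$ at infinity, $Q(z)\ge \bar q(z)\ge q(z)$, and $(zQ(z))'\ge 0$ on each $(k,k+1)$ --- exactly the properties required.
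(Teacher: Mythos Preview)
Your proposal is correct and follows essentially the same approach as the paper: both construct $Q$ as the linear interpolant of a sequence defined recursively by $N_{k+1}=\max(\text{envelope of }q,\ \tfrac{k+1}{k+2}N_k)$, verify the derivative constraint $(k+2)N_{k+1}\ge (k+1)N_k$ via the same computation, and prove $N_k\to 0$ by the same telescoping-product argument. Your phrasing ``as long as $N_k>\varepsilon$ \ldots\ the max is realised by the second term'' should read $N_{k+1}>\varepsilon$, but the logic is otherwise sound and in fact slightly cleaner than the paper's version (you use the non-increasing envelope $\bar q$ directly, while the paper adds an auxiliary $1/(k+1)$ term and splits the $N_k\to 0$ argument into cases).
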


The proof of Lemma \ref{lem:lininterp} will be given further below. Using Lemma \ref{lem:lininterp}, we can now give a proof of Lemma \ref{lem:improvmod}.

\begin{proof}[Proof of Lemma \ref{lem:improvmod}]
Let $q: [0,\infty) \to [0,\infty)$ be defined by setting $q(z):= f(z)/z$ for $z\in (0,\infty)$, and $q(0) := \limsup_{z\searrow 0} f(z)/z$. By assumption (P1) of Lemma \ref{lem:improvmod}, $q(z)$ is bounded on $[0,\infty)$. By (P2), we have $q(z) \to 0$ as $z\to \infty$. Thus, referring to Lemma \ref{lem:lininterp}, we can find a piecewise linear, continuous function $Q(z)$, such that $q(z) \le Q(z)$ for all $z$, $Q(z) \to 0$ as $z \to \infty$, and the derivative $Q'(z)$ satisfies
\[
Q'(z)\, z + Q(z) \ge 0, \quad \forall \, z \in (k,k+1), \; k=0,1,\dots
\]
In particular, this implies that $(Q(z)z)' \ge 0$ for $z\in [0,\infty)\setminus \mathbb{N}$, and hence $z \mapsto Q(z) z$ is a non-decreasing function of $z$. We define a (strictly) monotonically increasing function
\[
F(z) := Q(z) z + \min(z^2, \sqrt{z}).
\]
The additional term on the right-hand side guarantees that $F(z)$ is not only a non-decreasing, but a strictly monotonically increasing function of $z$. Since $Q(z)$ is continuous, $F(z)$ is continuous.

We now need to check that $F(z)$ satisfies the three properties (1)-(3) claimed in Lemma \ref{lem:improvmod}. Since $f(z) = q(z) z$ (by definition of $q(z)$), and since $q(z) \le Q(z)$, we find $f(z) = q(z) z \le Q(z) z \le F(z)$ for all $z \in (0,\infty)$. This is property (1).

Next, we have 
\[
\lim_{z\searrow 0} F(z) = \lim_{z\searrow 0} Q(z) z = 0,
\]
as $Q(0) = \lim_{z\searrow 0} Q(z)$ has a finite limit. For the behaviour at infinity, we note that $F(z) \ge \sqrt{z}$, so that $F(z) \to \infty$ as $z \to \infty$. This is property (2). 

To see that $F(z) \ll z$, as $z\to \infty$ (property (3)), we note that 
\[
\limsup_{z\to \infty} \frac{F(z)}{z}
= \limsup_{z\to \infty} Q(z) = 0,
\]
where the last equality holds, since $Q(z) \to 0$ by the construction in Lemma \ref{lem:lininterp}.

Finally, we verify the claimed representation of the inverse $F^{-1}$. Note that by properties (1)-(3), $F(z)$ is a strictly monotonically increasing, continuous function with image $[0,\infty)$ and hence is invertible, with continuous (strictly monotonically increasing) inverse $F^{-1}: [0,\infty) \to [0,\infty)$.

Let us define $\sigma: [0,\infty) \to [0,\infty)$, by 
\[
\sigma(\sqrt{y}) := \frac{F^{-1}(y)}{y}, \quad \text{for } y>0.
\]
We first need to check that $\sigma$ can be continuously extended to $0$. To this end, we note that 
\[
\sigma(\sqrt{F(z)}) = \frac{z}{F(z)} = \frac{1}{Q(z)}.
\]
By the construction of $Q(z)$ (cp. Lemma \ref{lem:lininterp}), we have $Q(z) > 0$ for all $z\in [0,\infty)$, and $Q(z)$ is linear on the interval $[0,1]$. Thus, we have $\lim_{z\to 0} Q(z) = Q(0) > 0$, and hence
\[
\lim_{y\to 0} \sigma(\sqrt{y}) 
= \lim_{z\to 0} \sigma(\sqrt{F(z)})
= 
\frac{1}{Q(0)} > 0.
\]
Thus $y \mapsto \sigma(\sqrt{y})$ has a continuous extension to $[0,\infty)$, with $\sigma(0) := Q(0)^{-1}$. In addition, from the relation
\[
\sigma(\sqrt{y})  = \frac{1}{Q(F^{-1}(y))},
\]
and the fact that $z \mapsto Q(z)$ is a monotonically decreasing function (cp. Lemma \ref{lem:lininterp}) and $y \mapsto F^{-1}(y)$ is strictly monotonically increasing, it follows that $y \mapsto \sigma(\sqrt{y})$ is a monotonically increasing function. This shows that $\sigma$ satisfies property (4) of Lemma \ref{lem:improvmod}. 

Property (5) is a simple consequence of (4), since $y \mapsto \sigma(\sqrt{y})$ is monotonically increasing, we have $\sigma(\sqrt{y}) \ge \sigma(0) =: \sigma_0$ for all $y\ge 0$. But, as shown above, we have 
\[
\sigma(0) = \frac{1}{Q(0)} > 0.
\]

We finally check that $\sigma$ satisfies the claimed property (6). Again, this follows from the equality $\sigma(\sqrt{F(z)}) = 1/Q(z)$, from which it follows that 
\[
\lim_{y\to \infty} \sigma(\sqrt{y})
=
\lim_{z\to \infty} \sigma(\sqrt{F(z)})
=
\lim_{z\to \infty} \frac{1}{Q(z)}.
\]
But $Q(z) \to 0$ as $z\to \infty$, and hence 
\[
\lim_{y\to \infty} \sigma(\sqrt{y})
= 
\lim_{z\to \infty} \frac{1}{Q(z)}
= 
\infty.
\]
This concludes the proof of Lemma \ref{lem:improvmod}.
\end{proof}

The proof of Lemma \ref{lem:improvmod} relies crucially on the construction of a suitable function $Q(z)$ with the properties in Lemma \ref{lem:lininterp}. The remainder of this section is devoted to the construction of such a function. We recall that we are given a bounded function $q(z)$, such that $q(z) \to 0$ as $z\to \infty$. We wish to construct a piecewise linear, continuous function $Q(z)$ such that 
\begin{enumerate}
\item $z\mapsto Q(z)$ is linear on integer intervals $z\in [k,k+1]$, $k=0,1,\dots$
\item $q(z) \le Q(z)$ for all $z\in [0,\infty)$,
\item $Q(z) > 0$ for all $z\in [0,\infty)$,
\item $Q(z) \to 0$ as $z\to \infty$,
\item $z\mapsto Q(z)$ is monotonically decreasing,
\item $Q(z)$ satisfies the following bound on the first derivative
\[
Q'(z) z + Q(z) \ge 0, \quad \text{for } z\in [0,\infty)\setminus \mathbb{N}.
\]
\end{enumerate}
 To construct such a $Q(z)$, given a sequence of real numbers $(q_k)_{k\in \mathbb{N}_0}$, let us denote by $I_{(q_k)}(z)$ the linear interpolant of the $q_k$ at integer points, i.e. given $k\in \mathbb{N}_0$, we set
\begin{align} \label{eq:lininterp}
I_{(q_k)}(z) 
= 
q_k + (z-k)(q_{k+1} - q_k),
\quad \text{for } z \in [k,k+1).
\end{align}
Then, by construction we have that $I_{(q_k)}(k) = q_k$ for all $k=0,1,\dots,$.
We will eventually set $Q(z) = I_{(q_k)}(z)$ for suitably chosen $(q_k)$. In the following, we will provide sufficient conditions on the sequence $(q_k)$, such that $Q(z)$ satisfies (Q1)-(Q6) above. (Q1) is clearly satisfied for any function $Q(z) = I_{(q_k)}(z)$. 

We begin with the following obvious observation:

\begin{claim}\label{claim:Q2}
If, for all $k=0,1,2,\dots$,
\begin{align} \label{eq:Q2}
\min(q_k,q_{k+1}) \ge \sup_{z \in [k,k+1]} q(z),
\quad
\text{and}
\quad 
q_k > 0,
\end{align}
 then (Q2) and (Q3) are satisfied.
\end{claim}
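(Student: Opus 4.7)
The plan is to verify the two properties (Q2) and (Q3) pointwise on each integer subinterval $[k,k+1]$, $k\in\mathbb{N}_0$. The key observation is a trivial property of linear interpolation: on each such interval, the linear interpolant $Q(z) = I_{(q_k)}(z) = q_k + (z-k)(q_{k+1}-q_k)$ is a convex combination of the two endpoint values $q_k$ and $q_{k+1}$, so one has the elementary two-sided bound
\[
\min(q_k,q_{k+1}) \;\le\; Q(z) \;\le\; \max(q_k,q_{k+1}),
\qquad \forall\, z\in[k,k+1].
\]

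From here both conclusions follow immediately. For (Q2), I would fix an arbitrary $z\ge 0$, locate the unique $k\in\mathbb{N}_0$ with $z\in[k,k+1]$, and then chain the lower bound above with the first hypothesis:
\[
Q(z) \;\ge\; \min(q_k,q_{k+1}) \;\ge\; \sup_{\zeta\in[k,k+1]} q(\zeta) \;\ge\; q(z).
\]
For (Q3), the same lower bound $Q(z)\ge \min(q_k,q_{k+1})$ combined with the second hypothesis $q_k>0$ (applied at both endpoints) yields $Q(z)>0$.

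There is no real obstacle here; the statement is labelled an "obvious observation" precisely because the whole content reduces to the endpoint-bound for linear interpolation. The only minor care needed is to note that the formula \eqref{eq:lininterp} is stated for $z\in[k,k+1)$, so one should remark that $I_{(q_k)}$ is defined consistently at integer points (since $I_{(q_k)}(k+1)=q_{k+1}$ whether evaluated from the interval $[k,k+1]$ or $[k+1,k+2]$), ensuring $Q$ is continuous and the pointwise estimates extend to the closed intervals $[k,k+1]$ without ambiguity.
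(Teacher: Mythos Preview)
Your proof is correct and matches the paper's intent exactly: the paper provides no proof at all for this claim, introducing it merely as an ``obvious observation,'' and your argument spells out precisely the endpoint-bound for linear interpolation that makes it obvious.
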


The next claim is similarly simple to verify:

\begin{claim} \label{claim:Q4}
If $q_k \to 0$ as $k\to \infty$, then (Q4) is satisfied. Furthermore, if 
\[
q_0 \ge q_1 \ge q_2 \ge \dots,
\]
decays monotonically, then (Q5) is satisfied.
\end{claim}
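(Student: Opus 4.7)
The plan is to verify each of the two implications separately, directly from the explicit formula \eqref{eq:lininterp} defining $Q(z) = I_{(q_k)}(z)$ as a piecewise linear interpolant of the values $(q_k)$.

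For the first statement, I would observe that on each interval $[k,k+1)$, the formula \eqref{eq:lininterp} expresses $Q(z)$ as a convex combination of $q_k$ and $q_{k+1}$: writing $\lambda = z-k \in [0,1)$, one has $Q(z) = (1-\lambda)q_k + \lambda q_{k+1}$, so $|Q(z)| \le \max(|q_k|,|q_{k+1}|)$. Given $\epsilon>0$, the hypothesis $q_k \to 0$ produces $K\in \mathbb{N}$ such that $|q_k|<\epsilon$ for all $k\ge K$; then for any $z \ge K$, writing $z\in [k,k+1)$ with $k\ge K$ and applying the bound above yields $|Q(z)|<\epsilon$. This shows $Q(z)\to 0$ as $z\to \infty$, which is property (Q4).

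For the second statement, I would argue on each interval $[k,k+1]$ separately and then patch. By \eqref{eq:lininterp}, $Q$ is linear on $[k,k+1]$ with slope $q_{k+1}-q_k$, which is non-positive under the monotonicity assumption $q_0 \ge q_1 \ge \cdots$; hence $Q$ is non-increasing on $[k,k+1]$. Since $Q(k) = q_k$ and these boundary values already form a non-increasing sequence, the restrictions glue together continuously into a non-increasing function on all of $[0,\infty)$, giving (Q5).

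Neither part presents a real obstacle; the only mild point to watch is the use of the convex-combination representation to get the uniform bound in terms of the tail of $(q_k)$, and making sure that the slopes and the integer-point values are both used to conclude monotonicity globally rather than only on each subinterval. Everything else is a direct consequence of the definition \eqref{eq:lininterp}.
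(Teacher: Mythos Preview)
Your argument is correct. The paper itself does not supply a proof of this claim at all, simply remarking that it is ``similarly simple to verify''; your convex-combination bound for (Q4) and slope-plus-endpoint argument for (Q5) are exactly the straightforward verification the paper has in mind.
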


Finally, we find a sufficient condition on the sequence $(q_k)$, such that (Q6) is satisfied.

\begin{claim} \label{claim:Q6}
If the sequence $(q_k)_{k\in \mathbb{N}_0}$ is positive, monotonically decreasing, and 
\begin{align} \label{eq:qdecay}
q_{k+1} \ge \frac{q_k}{1+\frac{1}{k+1}}, \quad \forall \, k=0,1,2,\dots,
\end{align}
then (Q6) is satisfied.
\end{claim}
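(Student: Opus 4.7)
My plan is to verify (Q6) by a direct computation on each interval of linearity $(k,k+1)$, $k\ge 0$. On such an interval, the linear interpolant has the form $Q(z) = q_k + (z-k)(q_{k+1}-q_k)$ with derivative $Q'(z) = q_{k+1}-q_k$, so the quantity of interest can be rewritten as
\[
Q'(z)z + Q(z) = \frac{d}{dz}\bigl[z\, Q(z)\bigr] = q_k + (q_{k+1}-q_k)(2z-k).
\]
This is an affine function of $z$; since by assumption the sequence is monotonically decreasing, its slope $2(q_{k+1}-q_k)$ is non-positive, and therefore the expression attains its minimum on $[k,k+1]$ at the right endpoint $z = k+1$.

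Evaluating the expression at $z=k+1$ gives $(k+2)q_{k+1} - (k+1)q_k$, which is non-negative precisely when
\[
q_{k+1} \ge \frac{k+1}{k+2}\,q_k = \frac{q_k}{1 + \frac{1}{k+1}},
\]
i.e. exactly under the hypothesis \eqref{eq:qdecay}. Hence $Q'(z)z+Q(z)\ge 0$ throughout $(k,k+1)$ for every $k\in\mathbb{N}_0$, which is (Q6). The argument is a short algebraic verification, and I do not anticipate any real obstacle — the only mildly subtle point is recognising that, thanks to monotonicity, the binding inequality sits at the right endpoint of each interval, which is what forces the sharp form of the decay condition \eqref{eq:qdecay} rather than a weaker one.
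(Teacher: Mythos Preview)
Your proof is correct and follows essentially the same approach as the paper: both recognise that on each interval the expression $Q'(z)z+Q(z)$ is minimised at the right endpoint $z=k+1$ (because $q_{k+1}-q_k\le 0$), and both reduce the question to checking $(k+2)q_{k+1}-(k+1)q_k\ge 0$, which is exactly \eqref{eq:qdecay}. The only cosmetic difference is that the paper bounds the two summands $(q_{k+1}-q_k)z$ and $I_{(q_k)}(z)$ separately (each bound being sharp at $z=k+1$), whereas you combine them into a single affine function of $z$ and read off its minimum directly.
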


\begin{proof}[Proof of Claim \ref{claim:Q6}]
Fix $k \in \mathbb{N}_0$. Then for any $z \in (k,k+1)$:
\[
I_{(q_k)}(z) 
=
q_k + (z-k)(q_{k+1}-q_k),
\]
and $q_{k+1}-q_k \le 0$, by assumption. Then $I_{(q_k)}(z) \ge q_{k+1}$, and
\begin{align*}
I_{(q_k)}'(z) z + I_{(q_k)}(z)
&= 
(q_{k+1} - q_k) z + I_{(q_k)}(z)
\\
&\ge 
(q_{k+1} - q_k) (k+1) + q_{k+1}
\\
&=
(k+1)q_{k+1}\left( 1 + \frac{1}{k+1} - \frac{q_k}{q_{k+1}}\right).
\end{align*}
Thus, $I_{(q_k)}'(z) z + I_{(q_k)}(z) \ge 0$ for $z\in (k,k+1)$, provided that 
\begin{align*}
1 + \frac{1}{k+1} \ge \frac{q_k}{q_{k+1}},
\end{align*}
or equivalently,
\[
q_{k+1} \ge \frac{q_k}{1+\frac{1}{k+1}},
\]
as claimed.
\end{proof}

Finally, we give a proof of Lemma \ref{lem:lininterp}.

\begin{proof}[Proof of Lemma \ref{lem:lininterp}]
We are given a bounded function $q(z)$ such that $\lim_{z\to \infty} q(z) = 0$. We wish to construct $Q(z)$ satisfying (Q1)-(Q6). To this end, we wish to find a suitable sequence $(q_k)_{k\in \mathbb{N}_0}$ satisfying the sufficient conditions provided by claims \ref{claim:Q2}-\ref{claim:Q6}, above, and set $Q(z) := I_{(q_k)}(z)$. 

We note that 
\[
k \mapsto \sup_{\substack{z \in [0,\infty)\\ z\ge k-1}} q(z), \qquad (k\in \mathbb{N}_0)
\]
is a monotonically decreasing sequence and 
\[
\lim_{k\to \infty} \sup_{\substack{z \in [0,\infty)\\ z\ge k-1}} q(z) = \limsup_{z\to \infty} q(z) = 0,
\]
by assumption on $q(z)$. Let now
\begin{align}
\overline{q}_k := \frac{1}{k+1} +  \sup_{\substack{z \in [0,\infty)\\ z\ge k-1}} q(z) > 0.
\end{align}
Clearly, $\bar{q}_k$ is a monotonically decreasing, positive sequence, such that $\overline{q}_k \to 0$. Furthermore, we have
\[
\overline{q}_k, \overline{q}_{k+1}
\ge 
\sup_{\substack{z \in [0,\infty) \\ z \ge k}} q(z)
\ge 
\sup_{z\in [k,k+1]} q(z),
\]
for all $k=0,1,2,\dots$. By Claim \ref{claim:Q2}, $Q(z) = I_{(\overline{q}_k)}(z)$ satisfies (Q2),(Q3). By Claim \ref{claim:Q4} and the monotonic decay $\overline{q}_k \searrow 0$, also (Q4) and (Q5) are satisfied. Unfortunately, there is no reason why (Q6) should be satisfied for $I_{(\overline{q}_k)}(z)$. We therefore replace $\overline{q}_k$ with another sequence $q_k$, defined recursively by $q_0 = \overline{q}_0$, and
\[
q_k := \max\left(\overline{q}_k,  \frac{{q}_{k-1}}{ 1+\frac1{k}}\right),
\]
for $k=1,2,\dots$. Then, clearly $q_k \ge \overline{q}_k > 0$ for all $k$. Furthermore, $(q_k)$ is monotonically decreasing: Indeed, if $q_k = \overline{q}_k$, then $q_{k-1} \ge \overline{q}_{k-1} > \overline{q}_k = q_k$. If $q_k \ne \overline{q}_k$, then 
\[
q_k = \frac{q_{k-1}}{1+\frac{1}{k}} < q_{k-1}.
\]
In addition, we have $\lim_{k\to \infty} q_k = 0$: If $q_k = \overline{q}_k$ infinitely many times, then this is clear from the fact that $\overline{q}_k \to 0$ and the monotonicity of $q_k$. On the other hand, if there exists $k_0 \in \mathbb{N}$, such that $q_k \ne \overline{q}_k$ for all $k\ge k_0$, then we must have
\[
q_k = \frac{q_{k-1}}{1+\frac{1}{k}} = \dots = \frac{q_{k_0}}{\prod_{\ell=k_0+1}^k \left(1 + \frac{1}{\ell}\right)}, \quad \forall \, k\ge k_0.
\]
Since $\prod_{\ell = k_0+1}^k \left(1+\frac 1\ell \right) \ge \sum_{\ell = k_0+1}^k \frac1 \ell \to \infty$ as $k\to \infty$, it follows that $q_k \to 0$ also in this case. Thus, $Q(z) = I_{(q_k)}(z)$ still satisfies properties (Q1)-(Q5). On the other hand, from the definition of the $q_k$, we have
\[
q_k =  \max\left(\overline{q}_k,  \frac{{q}_{k-1}}{ 1+\frac1{k}}\right)\ge \frac{q_{k-1}}{1+\frac 1k},
\]
and hence $Q(z) =  I_{(q_k)}(z)$ satisfies (Q6), by Claim \ref{claim:Q6}. 

We conclude that $Q(z) = I_{(q_k)}(z)$ satisfies all the claimed properties (which have been summarized as (Q1)-(Q6)) of Lemma \ref{lem:lininterp}.
\end{proof}

\section{Numerical structure function} \label{sec:sfformula}

In this appendix, we first derive an explicit formula for $S_2(u;r)$ for $u\in L^2_x$. Then, we show that there is an essentially equivalent definition $\widetilde{S}_2(u;r)$, which is computationally more convenient. 

The goal is to find a convient expression to evaluate the structure function 
\[
S_2(u;r)^2
:= 
\int_{D} \fint_{B_r(0)}
|u(x+h) - u(x) |^2 
\, dh
\, dx.
\]
By Parseval's identity
\[
S_2(u;r)^2
=
\sum_{k}
\Big(
\fint_{B_r(0)} |e^{ik\cdot h} - 1 |^2 \, dh
\Big)
|\widehat{u}(k)|^2
=:
\sum_{k} I_k(r) |\widehat{u}(k)|^2.
\]
We compute, in polar coordinates $(\ell,\theta)$,
\begin{align*}
I_k(r)
&=
\frac{1}{\pi r^2} \int_0^r \ell \int_0^{2\pi}
|e^{i\ell |k| \sin\theta} - 1|^2  \, d\theta \, d\ell 
\\
&=
\frac{2}{\pi r^2} \int_0^r \ell \int_0^{2\pi}
\left[1 -\cos(\ell |k|\cos \theta)\right]  \, d\theta \, d\ell 
\\
&=
\frac{4}{ r^2} \int_0^r \ell\left[1 - \underbrace{ \frac{1}{2\pi} \int_0^{2\pi} \cos(\ell |k|\cos \theta)\, d\theta}_{=J_0(|k|\ell)}\right] \, d\ell ,
\\
&=
\frac{4}{|k|^2 r^2} \int_0^{|k|r} x\left[1 - J_0(x)\right] \, dx,
\end{align*}
where the last integrand is expressed in terms of the Bessel function $J_0(x)$. We can now use the relationship 
\[
\frac{1}{x} \frac{d}{dx} \left(x J_1(x)\right)
= 
J_0(x),
\]
to see that
\[
I_k(r) = 2 - \frac{4 J_1(|k|r)}{|k|r}.
\]
This expression for $I_k(r)$ provides an exact expression for $S_2(u;r)$ in terms of the Fourier coefficients $\widehat{u}(k)$:
\begin{align}
S_2(u;r) = \left(\sum_k I_k(r) |\widehat{u}(k)|^2\right)^{1/2}.
\end{align}
Clearly, this last identity is particularly suitable for the evaluation of the structure function for numerically obtained approximate solutions by a spectral scheme, for which the Fourier coefficients are readily available.

Since special functions such as the Bessel function $J_1$ are computationally expensive to evaluate, we shall seek a simplified, yet essentially equivalent, choice $\widetilde{I}_k(r) \approx I_k(r)$. It turns out that 
\[
\widetilde{I}_k(r) := \min(|k|r/2,\sqrt{2})^2,
\]
provides a rather good approximation of $I_k(r)$ (cp. Figure \ref{fig:Ik}); more precisely, there exists a constant $C>1$, such that 
\[
\frac{1}{C} \widetilde{I}_k(r) 
\le 
I_k(r)
\le 
C \widetilde{I}_k(r).
\]

\begin{figure}[!htb]
\centering
\includegraphics[width=.5\textwidth]{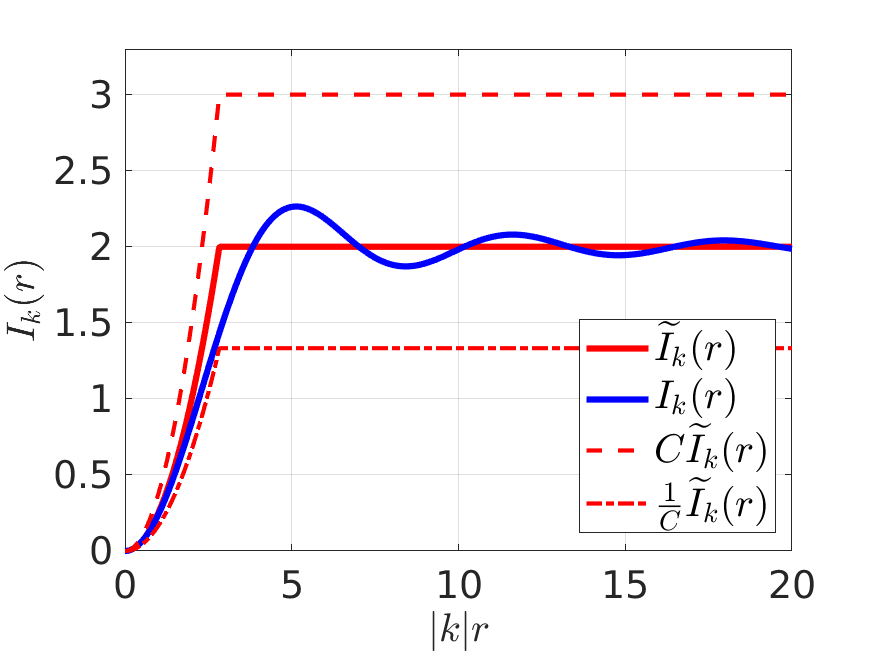}
\caption{$I_k(r)$ (blue) and $\widetilde{I}_k(r)$ (red) as a function of $|k|r$, $C=3/2$.}
\label{fig:Ik}
\end{figure}

In fact, Figure \ref{fig:Ik} indicates that e.g. $C=3/2$ provides such a bound. We can now use $\widetilde{I}_k(r)$ to define an equivalent \emph{numerical structure function}
\[
\widetilde{S}_2(u;r)
:= 
\left(
\sum_{k}
\widetilde{I}_k(r) |\widehat{u}(k)|^2
\right)^{1/2}.
\]
Then 
\[
\frac 1C \widetilde{S}_2(u;r)
\le 
{S}_2(u;r)
\le 
C\widetilde{S}_2(u;r).
\]
In particular, $\widetilde{S}_2$ decays at the same rate as $S_2$.

\bibliographystyle{abbrv}

\bibliography{EnergyConservation}

\end{document}